\newtheorem{dfn}{Definition}[subsection]
\newtheorem{thm}[dfn]{Theorem}
\newtheorem{lem}[dfn]{Lemma}
\newtheorem{cor}[dfn]{Corollary}
\newtheorem{rem}[dfn]{Remark}
\newtheorem{prop}[dfn]{Proposition}\makeatletter
\begin{document}
\title{\bf {\large Two-sided random walks conditioned to have no intersections}}
\author{Daisuke Shiraishi}
\date{Research Institute for Mathematical Sciences \\ Kyoto University \\ \vspace{1.0\baselineskip} siraisi@kurims.kyoto-u.ac.jp}

\maketitle
\begin{abstract}
Let $S^{1},S^{2}$ be independent simple random walks in $\mathbb{Z}^{d}$ ($d=2,3$) started at the origin. We construct two-sided random walk paths conditioned that $S^{1}[0,\infty ) \cap S^{2}[1, \infty ) = \emptyset$ by showing the existence of the following limit:
\begin{equation*}
\lim _{n \rightarrow \infty } P ( \cdot \  | \ S^{1}[0, \tau ^{1} ( n) ] \cap S^{2}[1, \tau ^{2}(n) ] = \emptyset ),
\end{equation*}
where $\tau^{i}(n) = \inf \{ k \ge 0 : |S^{i} (k) | \ge n \}$. Moreover, we give upper bounds of the rate of the convergence. These are discrete analogues of results for Brownian motion obtained in \cite{Law*} and \cite{Law}.  
\end{abstract}

\section{Introduction and Main Results}
\subsection{Introduction}
Let $S=(S(n))$ be a simple random walk in $\mathbb{Z}^{d}$ ($d=2,3$) started at the origin. Take integers $k < n$. A time $k$ is called cut time up to $n$ if
\begin{equation}\label{cut time def}
S[0,k] \cap S[k+1, n] = \emptyset,
\end{equation}
where $S[0,k]= \{ S(j) : 0 \le j \le k \}$. We call $S(k)$ a cut point if $k$ is a cut time. Lawler \cite{Law2} has shown that there are constants $0 <c , c^{\prime} < \infty$ such that for all $n$,
\begin{equation}\label{no intersection probability}
c n^{- \frac{\xi}{2} } \le P ( S[0,n] \cap S[n+1, 2n] = \emptyset ) \le c^{\prime} n^{- \frac{\xi}{2} },
\end{equation}
where $\xi = \xi_{d}$ is the intersection exponent (see Section \ref{intersection exponent} below). Lawler, Schramm and Werner \cite{LSW} have proved that $\xi_{2}=\frac{5}{4}$ by using the SLE techniques. The value of $\xi_{3}$ is not still known. Let $J_{k}$ be the indicator function of the event that $k$ is a cut time up to $n$ and let $R_{n} = \textstyle\sum\limits _{k=0}^{n} J_{k}$. Lawler \cite{Law2} also proved that there exists $c>0$ such that
\begin{align*}
&P( R_{n} \ge c n^{ 1 - \frac{\xi}{2} } ) \ge c \ \text{ for } d=2, \\
&R_{n} \approx n^{ 1- \frac{\xi }{2} } \ \text{ with probability one for } d=3,
\end{align*}
where $\approx$ denotes that the logarithms of both sides are asymptotic.

While the understanding of the number of cut times has been advanced, there is a few results about the geometrical structure of the path around cut points, which is the purpose of this paper. We consider the following problem. If we condition that $S[0,n] \cap S[n+1, 2n] = \emptyset$, then what kind of structure does the path have around $S(n)$? Let $S^{1},S^{2}$ be independent simple random walks started at the origin. Then, thanks to the translation invariance and the reversibility of the simple random walk, our problem may be deduced to clarify the structure of $S^{1},S^{2}$ around the origin when we condition that $S^{1}[0,n] \cap S^{2}[1,n] = \emptyset$. Letting $n \rightarrow \infty$, we will face the following problems:
\begin{align}\label{problems}
&\text{(i) Construct two-sided path conditioned that } S^{1}[0, \infty ) \cap S^{2}[1, \infty ) = \emptyset. \\
&\text{(ii) What kind of geometrical structure does such a conditioned path have? } \\
&\text{(iii) Is the difference between two sided path conditioned } S^{1}[0,n] \cap S^{2}[1,n]=\emptyset \notag \\
&\text{ \ \ \ \ and the conditioned path in (i) small around the origin? } 
\end{align}

By \eqref{no intersection probability}, the probability that $S^{1}[0, \infty ) \cap S^{2}[1, \infty ) = \emptyset$ is 0 for $d=2,3$, so question (i) is not trivial. For Brownian motions, Lawler \cite{Law*}, and Lawler, Vermesi \cite{Law} have constructed Brownian paths conditioned to have no intersections. More precisely, let $B^{1},B^{2}$ be Brownian motions in $\mathbb{R}^{d}$ ($d=2,3$) starting distance one apart and 
\begin{equation*}
T^{i}(R) = \inf \{ t \ge 0 : |B^{i}(t)| = R \}.
\end{equation*}
In \cite{Law*}, it was proved that for $d=2$, the limit
\begin{equation}\label{Lawler results}
\lim _{n \rightarrow \infty } P( \cdot \ | \ B^{1}[0, T^{1}( e ^{n} ) ] \cap B^{2}[0, T^{2}( e^{n})] = \emptyset )
\end{equation}
exists and the rate of convergence is bounded above by $O( e^{- \delta \sqrt{n}})$ for some $\delta > 0$. For $d=3$, it was shown in \cite{Law} that the limit of \eqref{Lawler results} also exists and the rate of convergence is at most $O ( e^{ - \delta n})$ (see Proposition \ref{lawlerbmresult}). 

In this paper we will answer the question (i) and (iii). We will construct the path in (1.3) by proving the existence of the limit as in \eqref{Lawler results} for simple random walk (Theorem \ref{main theorem}). Furthermore, we will derive same rates of convergence as Brownian cases. Since the speed of convergence in Theorem \ref{main theorem} is relatively fast, it would give evidence that the gap considered in (1.5) is small.

Even though the conditioned Brownian paths were already constructed as in \eqref{Lawler results}, it is not straightforward to construct it for the simple random walk. Both in \cite{Law*} and \cite{Law}, the scaling property of Brownian motion is crucial in the construction and hence the same arguments cannot be applied for the simple random walk case. To overcome this problem, we will use the strong approximation of Brownian motion by simple random walk derived from the Skorohod embedding. By this approximation, we can define simple random walks $S^{1},S^{2}$ and Brownian motions $B^{1}, B^{2}$ on the same probability space so that with high probability, the paths of $S^{i}$ are very close to those of $B^{i}$. However, if $S^{1}$ and $S^{2}$ start from a same point, then the difference between the path of $S^{i}$ and that of $B^{i}$ is too large to control the difference between $P(B^{1}[0,n] \cap B^{2}[1,n] = \emptyset )$ and $P( S^{1}[0,n] \cap S^{2}[1,n] = \emptyset ) $. (See Proposition \ref{skorohod} for the difference between $S^{i}[0,n]$ and $B^{i}[0,n]$. We must admit the fact that the difference may be of order $n^{\frac{1}{4}}$.) This difficulty can be dealt with using the following ideas. Even if starting points of $S^{1}$ and $S^{2}$ are very close, they gradually have a good chance of being reasonably far apart because of the conditioning not to intersect. Once $S^{1}$ and $S^{2}$ are far apart, we can use the Skorohod embedding to control the non-intersection probability of simple random walks (see Proposition \ref{upper-bound} for details).  

The question (iii) will be discussed in a forthcoming paper \cite{S}. Let $\overline{S}^{1}, \overline{S}^{2}$ be the associated two-sided random walks whose probability law is $P^{\sharp}$ in Theorem \ref{main theorem}. In order to show that paths of $\overline{S}^{i}$ have different structures from those of usual simple random walk $S^{i}$, we will consider a simple random walk on $\overline{\cal G} := \overline{S}^{1}[0,\infty ) \cup \overline{S}^{2}[0,\infty )$. (Here we regard $\overline{\cal G}$ as the subgraph consisting of all the vertices visited and edges traversed by either $\overline{S}^{1}$ or $\overline{S}^{2}$.) In \cite{S}, it will be shown that the simple random walk on $\overline{\cal G}$, say $X$, has subdiffusive behavior for $d=2$. This is due to that $\overline{\cal G}$ has many so called bottleneck edges and it takes much longer for $X$ to move away from its starting point compared to the simple random walk in $\mathbb{Z}^{2}$. 

Throughout this paper, we use $c, c^{\prime}, c_{1} , c_{2}, \cdots $ to denote arbitrary constants that depend only on the dimension $d$. The values of them may change from place to place.





\subsection{Framework and Main results}
Let $d=2,3$. For $x \in \mathbb{Z}^{d}$, let
\begin{equation*}
{\cal B}(x, n) = \{ z \in \mathbb{Z}^{d} : |z| < n \}
\end{equation*}
and
\begin{equation*}
\partial {\cal B}(x,n) = \{ z \in \mathbb{Z}^{d} \backslash {\cal B}(x, n) : |z-y| =1 \text{ for some } y \in {\cal B}(x, n) \}.
\end{equation*}
We write ${\cal B} (n) ={\cal B}(0,n)$ and $\partial {\cal B}(n) = \partial {\cal B}(0,n)$. Let ${\cal B}_{k}(x) = {\cal B}(x, 2^{k})$ and $\partial {\cal B}_{k}(x) = \partial {\cal B} (x, 2^{k})$. We also write ${\cal B}_{k} = {\cal B}_{k}(0)$ and $\partial {\cal B}_{k} = \partial {\cal B}_{k}(0)$.

A sequence of points $\gamma = [ \gamma(0) , \gamma (1), \cdots , \gamma (l)] \subset \mathbb{Z}^{d}$ is called path if $|\gamma(j) - \gamma(j-1)|=1$ for each $j=1,2, \cdots , l$. We let len$\gamma=l$ be the length of the path, $\Lambda (n)$ be the set of paths satisfying that
\begin{align*}
&\gamma (0)=0, \gamma (j) \in {\cal B}(n) \text{ for all } j=0,1, \cdots , \text{len}\gamma -1 \\
&\gamma( \text{len}\gamma ) \in \partial {\cal B}(n). 
\end{align*}
Let
\begin{equation*}
\Gamma (n) = \{ \overline{\gamma}=(\gamma ^{1}, \gamma ^{2}) \in \Lambda (n) ^{2} : \gamma^{1}(i) \neq \gamma ^{2}(j) \text{ for all } (i,j) \neq (0,0) \}, 
\end{equation*}
and $\Gamma (\infty ) = \bigcap _{n=1}^{\infty} \Gamma (n)$. 
We write $\Gamma_{k}= \Gamma(2^{k})$.

Let $S^{1},S^{2}$ be the independent simple random walks in $\mathbb{Z}^{d}$ started at the origin. Let
\begin{equation*}
\tau ^{i}(n) =\inf \{ k \ge 0 : S^{i}(k) \in \partial {\cal B} (n) \},
\end{equation*}
and $\tau ^{i}_{k}= \tau ^{i}(2^{k})$.


\begin{thm}\label{main theorem}
Let $d=2$ or $3$. For each $L$ and $\overline{\gamma} \in \Gamma (L)$, the limit 
\begin{equation}\label{shuusoku}
\lim _{N \rightarrow \infty } P \Big( (S^{1}[0,\tau ^{1}( L ) ] , S^{2}[0,\tau ^{2}( L ) ] )= \overline{\gamma} \  \big| \  (S^{1}[0,\tau ^{1}( N) ] , S^{2}[0,\tau ^{2}( N ) ] ) \in \Gamma (N) \Big) =: P^{\sharp}(\overline{\gamma})
\end{equation}
exists. Furthermore, there exist $\delta > 0$ and $c < \infty$ depending only on the dimension such that the following holds for all $L$ and $\overline{\gamma} \in \Gamma (L)$.
\begin{align}\label{shuusokunohayasa}
&\Big| P \Big( (S^{1}[0,\tau ^{1}( L ) ] , S^{2}[0,\tau ^{2}( L ) ] )= \overline{\gamma} \  \big| \  (S^{1}[0,\tau ^{1}( N) ] , S^{2}[0,\tau ^{2}( N ) ] ) \in \Gamma (N) \Big) - P^{\sharp}(\overline{\gamma}) \Big| \le c e^{ - \delta \sqrt{\log N}} \\
&\text{for } d=2, \notag \\
&\Big| P \Big( (S^{1}[0,\tau ^{1}( L ) ] , S^{2}[0,\tau ^{2}( L ) ] )= \overline{\gamma} \  \big| \  (S^{1}[0,\tau ^{1}( N) ] , S^{2}[0,\tau ^{2}( N ) ] ) \in \Gamma (N) \Big) - P^{\sharp}(\overline{\gamma}) \Big| \le c N^{-\delta } \\
&\text{for } d=3, \notag
\end{align}
and $P^{\sharp}$ extends uniquely to a probability measure on $\Gamma (\infty )$.

\end{thm}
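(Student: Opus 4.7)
The plan is to establish the limit via a Cauchy-type estimate between the conditional measures at scales $N$ and $M$ with $L\le N<M$, matching the rates in \eqref{shuusokunohayasa}. For $\bar\eta\in\Gamma(L)$, write $A_{\bar\eta}$ for the event $(S^{1}[0,\tau^{1}(L)],S^{2}[0,\tau^{2}(L)])=\bar\eta$ and set
\begin{equation*}
q(\bar\eta,N)=P\bigl((S^{1}[0,\tau^{1}(N)],S^{2}[0,\tau^{2}(N)])\in\Gamma(N)\,\big|\,A_{\bar\eta}\bigr).
\end{equation*}
By the Markov property the conditional probability in \eqref{shuusoku} equals
\begin{equation*}
\frac{P(A_{\bar\gamma})\,q(\bar\gamma,N)}{\sum_{\bar\eta\in\Gamma(L)}P(A_{\bar\eta})\,q(\bar\eta,N)},
\end{equation*}
so the theorem reduces to showing that the ratio $q(\bar\eta,N)/q(\bar\eta',N)$ stabilizes as $N\to\infty$ at the stated rate, uniformly in $\bar\eta,\bar\eta'\in\Gamma(L)$.

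First I would prove a separation-at-the-boundary lemma: there exist $c,\alpha>0$ so that, given $A_{\bar\eta}$ and the non-intersection event up to scale $2^{k}$, with conditional probability at least $1-ce^{-\alpha(k-L)}$ the endpoints $S^{i}(\tau^{i}_{k})$ form a well-separated configuration — pairwise distance and distance to the previously traced paths both of order $2^{k}$. This should be proved inductively across dyadic scales, using quasi-multiplicativity of the non-intersection probability (a consequence of \eqref{no intersection probability} together with standard last-exit decompositions) and a uniform positive lower bound on the conditional probability that separation improves by one scale.

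Next, on the separated event at an intermediate scale $2^{k}$, I would couple $(S^{1},S^{2})$ from scale $2^{k}$ up to scale $N$ with independent Brownian motions $(B^{1},B^{2})$ via the Skorohod embedding (Proposition \ref{skorohod}), which produces an error of order $N^{1/4+\varepsilon}$. Coupling from the origin is hopeless because that error would swamp any structure; the role of the separation lemma is precisely to arrange, from scale $2^{k}$ onward, a mutual separation dominating the Skorohod error as soon as $2^{k}\gg N^{1/4+\varepsilon}$. On the good coupling event, the random-walk non-intersection event between scales $2^{k}$ and $N$ agrees with its Brownian analogue up to an $N^{1/4+\varepsilon}$-thickening of the paths, and the resulting discrepancy can be absorbed using a one-scale Beurling-type estimate. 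Choosing $k\asymp\sqrt{\log N}$ in $d=2$ and $k\asymp\varepsilon\log N$ in $d=3$ balances the separation error against the coupling error.

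With these ingredients in place, the Brownian convergence of Proposition \ref{lawlerbmresult} can be transported back to the random walk: one would obtain a function $\Psi\colon\Gamma(L)\to(0,\infty)$ with
\begin{equation*}
\Bigl|\frac{q(\bar\eta,N)}{q(\bar\eta',N)}-\frac{\Psi(\bar\eta)}{\Psi(\bar\eta')}\Bigr|\le c\,\rho(N),
\end{equation*}
where $\rho(N)=e^{-\delta\sqrt{\log N}}$ for $d=2$ and $\rho(N)=N^{-\delta}$ for $d=3$. Substituting into the displayed fraction, and using the summability $\sum_{\bar\eta}P(A_{\bar\eta})\le 1$ to bound the contribution of paths of large length, yields both the existence of $P^{\sharp}(\bar\gamma)$ and the quantitative estimate \eqref{shuusokunohayasa}. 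Consistency of $\{P^{\sharp}|_{\Gamma(L)}\}_{L}$ (visible at the level of the formula above by summing over extensions at scale $L'<L$) together with Kolmogorov's extension theorem produces a unique probability measure on $\Gamma(\infty)$.

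The main obstacle is calibrating the intermediate separation scale $2^{k}$: it must be large enough for the Skorohod error to be negligible relative to the separation, yet the failure probability of separation by scale $2^{k}$, which is only $e^{-\alpha(k-L)}$, must dominate the final rate. Balancing $e^{-\alpha k}$ against $N^{1/4+\varepsilon}/2^{k}$ forces $k\asymp\sqrt{\log N}$ in the critical two-dimensional case and is the source of the sub-exponential rate there, while in $d=3$ the faster Brownian convergence of \cite{Law} transports across the coupling with essentially no loss, giving the polynomial rate.
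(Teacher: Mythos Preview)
Your overall architecture --- separation at an intermediate scale, Skorohod coupling beyond it, and transport of the Brownian convergence of Proposition~\ref{lawlerbmresult} --- is exactly the paper's strategy. But your calibration of the intermediate scale is internally inconsistent and misidentifies the source of the $d=2$ rate.

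You correctly observe that the coupling only works once the separation $2^{k}$ dominates the Skorohod error, i.e.\ once $2^{k}\gg N^{1/4+\varepsilon}$ (in fact $N^{1/2+\varepsilon}$, since you couple up to the spatial stopping time; see \eqref{approx stop}). But your proposed choice $k\asymp\sqrt{\log N}$ gives $2^{k}=\exp(c\sqrt{\log N})$, which is \emph{much smaller} than any power of $N$, so the coupling would fail outright at the larger scales. The correct choice --- the one the paper makes --- is $k$ a fixed fraction of $\log_{2}N$ (the paper takes $k=m/3$ with $m=\log_{2}N$). With that choice the separation failure probability $e^{-\alpha k}$ and the coupling error are both polynomially small in $N$, in either dimension.

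The sub-exponential rate $e^{-\delta\sqrt{\log N}}$ in $d=2$ therefore does \emph{not} arise from balancing separation against coupling, as you claim. It comes entirely from the Brownian input: Proposition~\ref{lawlerbmresult} gives $|Q_{n}-Q|\le ce^{-\beta\sqrt{n}}$ in $d=2$, where $n$ is the logarithmic scale; since the Brownian comparison runs over $n\asymp\log N$ dyadic scales, this yields $e^{-\beta'\sqrt{\log N}}$. In $d=3$ the Brownian rate is $e^{-\beta n}$, i.e.\ polynomial in $N$, so it matches the other errors and the final bound is $N^{-\delta}$. Once you fix the choice of $k$ and the attribution of the rate, your sketch aligns with the paper's proof.
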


The paper is organized as follows. Section 2 gives some preliminary propositions about Brownian motions and simple random walks. In particular, we state the Skorohod embedding which is crucial in this paper. Key estimates are given in Section 3 by using this approximation. We give the proof of Theorem \ref{main theorem} in Section 4.

\section{Known Results}

In this section, we give a list of definition of the objects and known results commonly used throughout this paper. 

\subsection{Intersection Exponent}\label{intersection exponent}
In this subsection, we review the intersection exponent for Brownian motion and simple random walk. Let $d=2$ or $3$. Let $B^{1},B^{2}$ be independent Brownian motions in $\mathbb{R}^{d}$. We start by stating the estimate from \cite{Law+}. Let
\begin{equation*}
T^{i}(n) = \inf \{ t \ge 0: |B^{i}(t)| =n \} ,
\end{equation*}
and write $P^{x,y}=P^{x,y}_{1,2}$ to denote probabilities assuming $B^{1}(0)=x, B^{2}(0)=y$. Then we have the following proposition.
\begin{prop}\label{intersection exp bm} (\cite{Law+}, Corollary 3.13.)
There exist $\xi = \xi _{d}$, $ c < \infty$ and an increasing function $f : (0,2] \rightarrow (0,\infty )$ such that if $|x|=|y|=1$, then for all $n \ge 1$
\begin{equation}\label{probab estimate no int}
f(|x-y|) n^{-\xi} \le P^{x,y}( B^{1}[0,T^{1}(n)] \cap B^{2}[0,T^{2}(n)] = \emptyset ) \le c n^{-\xi}.
\end{equation}
\end{prop}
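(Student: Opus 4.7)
The proof is the classical submultiplicativity argument for the Brownian intersection exponent, coupled with the separation lemma for non-intersecting Brownian motions. Set $q(n) = \sup_{|x|=|y|=1} P^{x,y}(B^1[0,T^1(n)] \cap B^2[0,T^2(n)] = \emptyset)$. By the strong Markov property at the pair of stopping times $(T^1(n), T^2(n))$ combined with Brownian scaling by $1/n$, the full non-intersection event defining $q(mn;x,y)$ entails non-intersection of the rescaled second phases, which starts from a unit-sphere configuration and exits the ball of radius $m$. Integrating out the endpoints using $q(m)$ gives $q(mn) \le q(n)\,q(m)$, so by Fekete's lemma applied to $-\log q(n)$, the exponent $\xi := -\lim_n \log q(n)/\log n$ exists. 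A confinement construction (each walk restricted to a disjoint half-space) forces $\xi < \infty$, while a union bound over annular shells forces $\xi > 0$. This yields the upper bound $P^{x,y}(\cdots) \le c n^{-\xi}$ uniformly in $(x,y)$ on the unit sphere.

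For the matching lower bound, the key technical input is the \emph{separation lemma}: for any ``well-separated'' initial configuration (say $|x-y|\ge r_0$ on the unit sphere, $r_0$ a fixed constant), the joint probability of non-intersection up to $T^i(n)$ \emph{together with} $|B^1(T^1(n))-B^2(T^2(n))| \ge c n$ is bounded below by a constant multiple of the full non-intersection probability. Granted the separation lemma, one can concatenate non-intersection events across scales: after reaching scale $n$ with separated endpoints, Brownian scaling brings us back to a well-separated unit-sphere configuration, and the strong Markov property lets us run for another factor $m$. This produces a reverse submultiplicative inequality $q_*(mn) \ge c\,q_*(n)\,q_*(m)$ for the infimum $q_*$ of non-intersection probabilities over well-separated starts, which combined with $q_* \le q$ forces $q_*(n) \ge c\,n^{-\xi}$.

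For general $(x,y)$ on the unit sphere, one performs a ``warm-up'': run the two motions until their first exit of the ball of radius $2$, and let $f(|x-y|)$ be the probability that they exit without having intersected, at a pairwise distance at least $r_0$ (rescaled back to the unit sphere). By rotation invariance of the pair $(x,y)$, this probability depends only on $|x-y|$; by standard Brownian estimates (the Beurling projection bound in $d=2$, direct harmonic-measure computation in $d=3$), $f$ is strictly positive and increasing in its argument. Composing the warm-up with the previous step yields the lower bound $f(|x-y|) n^{-\xi}$ for $n\ge 2$, and the case of small $n\ge 1$ is absorbed by shrinking $f$ by a constant factor.

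The main obstacle is the separation lemma; the other steps (submultiplicativity, Fekete's lemma, warm-up) are routine consequences of scaling and the strong Markov property once it is in place. Its difficulty lies in ruling out that the degenerate conditioning on non-intersection squeezes the two endpoints together on the macroscopic scale $n$. It is proved by a multi-scale iteration in which, at each dyadic scale, Harnack-type estimates show that failure of endpoint separation costs only a bounded conditional factor, and a summation over scales recovers the uniform constant. Executing this argument rigorously is the bulk of the work in \cite{Law+}.
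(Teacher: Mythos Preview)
The paper does not prove this proposition at all: it is quoted verbatim as a known result from \cite{Law+} (Corollary 3.13), with no argument supplied. So there is no ``paper's own proof'' to compare against here.

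That said, your outline is an accurate summary of the argument in \cite{Law+}: submultiplicativity of the supremum non-intersection probability plus Fekete's lemma to define $\xi$ and get the upper bound, the separation lemma to obtain a reverse inequality and hence the matching lower bound for well-separated starts, and a one-step warm-up to manufacture the initial-configuration factor $f(|x-y|)$. One small quibble: rotation invariance alone does not imply the warm-up probability depends only on $|x-y|$ in $d=3$ (a pair of unit vectors has more invariants than just their distance under the full isometry group, but not under rotations fixing the origin --- actually in $d=3$ the rotation group acts transitively on pairs of unit vectors at fixed angle, so this is fine; in $d=2$ it is immediate). More substantively, you should be aware that monotonicity of $f$ is not entirely automatic and in \cite{Law+} is obtained by a coupling/domination argument rather than ``standard estimates''; but for the purposes of this paper only strict positivity of $f$ is ever used, so this point is harmless.
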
 

Next we state the analogues for simple random walks. Let $S^{1}, S^{2}$ be independent simple random walks in $\mathbb{Z}^{d}$. Again we write $P^{x,y}=P^{x,y}_{1,2}$ to denote probabilities assuming $S^{1}(0)=x, S^{2}(0)=y$. Let
\begin{equation*}
\tau ^{i}(n) = \inf \{ k \ge 0 : |S^{i}(k)| \ge n \}.
\end{equation*}
Then the following proposition was proved in \cite{Law2}.
\begin{prop}\label{intersection exp rw}(\cite{Law2}, Theorem 1.3, Corollary 4.6.)
Let $\xi$ be the exponent in Proposition \ref{intersection exp bm}. Then there exist constants $c_{1},c_{2}$ such that the following holds.
\begin{align}\label{probab estimate no int rw}
c_{1} n^{-\xi } \le &P^{0,0}(S^{1}[0,\tau ^{1}(n)] \cap S^{2}(0, \tau ^{2}(n)] = \emptyset ) \le c_{2} n^{-\xi}, \\
\sup _{|x| , |y| \le m} &P^{x,y}( S^{1}[0,\tau ^{1}(n)] \cap S^{2}(0, \tau ^{2}(n)] = \emptyset ) \le c_{2} \big( \frac{n}{m} \big) ^{-\xi}, 
\end{align}
for all $m \le n$.
\end{prop}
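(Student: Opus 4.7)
The plan is to transfer Proposition \ref{intersection exp bm} from Brownian motion to simple random walks via the Skorohod embedding (Proposition \ref{skorohod}), using a \emph{separated} non-intersection event as the bridge: both the coupling error and the non-intersection estimates are well-behaved when the two paths stay macroscopically apart. Throughout I fix a coupling-error scale $h = o(n)$ polynomially small in $n$ (e.g. $h = n^{1/2+\varepsilon}$ given the $O(n^{1/4})$ per-unit-time Skorohod error), so that the coupled Brownian motions $B^i$, started at the same points as $S^i$, satisfy $\max_{k \leq \tau^i(2n)} |S^i(k) - B^i(k)| \leq h$ outside an event of probability at most $n^{-A}$ for any fixed $A$.

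For $|x|, |y| \leq m$, let $A^{x,y}(n)$ denote the non-intersection event $\{S^1[0,\tau^1(n)] \cap S^2(0, \tau^2(n)] = \emptyset\}$ and $A^{x,y}_{\mathrm{sep}}(n)$ the stronger event that also requires $|S^1(\tau^1(n)) - S^2(\tau^2(n))| \geq n/K$ for a suitable $K$; define the Brownian analogues similarly. The first ingredient is a \emph{separation lemma}, proved by an iterated-annulus/Harnack scheme, giving constants uniform in $|x|, |y| \leq m$ such that
\begin{equation*}
c\, P^{x,y}(A(n)) \le P^{x,y}(A_{\mathrm{sep}}(n)) \le P^{x,y}(A(n))
\end{equation*}
for both SRW and BM. The second ingredient is the Skorohod coupling: on the good coupling event, if the BM paths satisfy $A_{\mathrm{sep},\mathrm{BM}}$ with separation strengthened to $\geq 2h$ (harmless, as one more iteration of the separation lemma sharpens the gap), then the coupled SRW paths automatically satisfy $A_{\mathrm{sep},\mathrm{SRW}}(n-h)$ (and vice versa). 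Since the $n^{-A}$ coupling error is negligible compared to the target $n^{-\xi}$, the probabilities $P^{x,y}(A_{\mathrm{sep},\mathrm{SRW}}(n))$ and $P^{x,y}(A_{\mathrm{sep},\mathrm{BM}}(n))$ agree up to multiplicative constants.

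Chaining these comparisons with Proposition \ref{intersection exp bm} and Brownian scaling by $m$ yields $P^{x,y}(A_{\mathrm{SRW}}(n)) \leq c(n/m)^{-\xi}$ uniformly in $|x|, |y| \leq m$, which is the sup inequality; the upper bound in the first inequality is then the special case $m=1$, $x=y=0$. For the matching lower bound $P^{0,0}(A_{\mathrm{SRW}}(n)) \geq c_1 n^{-\xi}$, I apply the BM lower bound at unit-distance starting points and the same coupling, prepended by a short initial segment of $O(1)$ steps during which $S^1$ and $S^2$ separate from the origin to distance $\geq 2$ without intersecting, which occurs with constant probability. The main obstacle is the separation lemma for the simple random walk, uniformly in the starting configuration: conditioned on non-intersection up to scale $n$, one must show positive conditional probability of macroscopic separation at scale $n$, which requires an induction on dyadic annuli using the strong Markov property and Harnack's inequality, in the style of Lawler's proof in \cite{Law2}.
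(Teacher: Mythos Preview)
The paper does not prove this proposition: it is quoted verbatim from \cite{Law2} (Theorem~1.3 and Corollary~4.6) and used as a black box, so there is no in-paper proof to compare against. Your sketch is, in outline, exactly the strategy of \cite{Law2}: Skorohod coupling plus a separation lemma to transfer Proposition~\ref{intersection exp bm} to the lattice.

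That said, there is a genuine gap in your second ingredient. You claim that on the good coupling event, if the Brownian pair satisfies $A_{\mathrm{sep},\mathrm{BM}}$ with endpoint separation $\ge 2h$, then the coupled walks automatically satisfy $A_{\mathrm{sep},\mathrm{SRW}}(n-h)$. This is false as stated: endpoint separation says nothing about the \emph{interior} of the paths. The Brownian paths can be non-intersecting yet come within distance $2h$ of each other somewhere in the bulk, and at such a close approach the coupled random walks may well intersect. Conversely, non-intersecting walks whose Brownian shadows touch in the interior cannot be ruled out by endpoint data. The coupling only yields the implication ``$\mathrm{dist}(B^1[0,T^1(n)],B^2[0,T^2(n)])\ge 2h \Rightarrow S^1\cap S^2=\emptyset$'', and that uniform-distance event is strictly smaller than $A_{\mathrm{sep},\mathrm{BM}}$.

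What closes the gap in \cite{Law2} (and what the present paper reproduces in Section~3 for its own purposes) is a separate estimate: conditioned on non-intersection up to scale $n$, the probability that the two paths ever come within $n^{1-\epsilon}$ of each other is $O(n^{-\delta})$ times the unconditioned non-intersection probability. This uses the Beurling estimate in $d=2$ and the ``hittable set'' estimates (Propositions~\ref{hittable for bm} and~\ref{hittable for srw} here) in $d=3$. Once you have that, the close-approach event is negligible and the coupling transfer goes through. Your sketch should replace the endpoint-separation claim by this close-approach estimate; the separation lemma you describe is still needed, but only for the \emph{lower} bound and for iterating across scales, not for the coupling step itself.
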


\begin{rem}\label{values}
In \cite{LSW}, it was proved that
\begin{equation}\label{2-dim}
\xi_{2}= \frac{5}{4}.
\end{equation}
The value of $\xi _{3}$ is not known. Rigorous estimate (\cite{BL}, \cite{Law+}) show that $\frac{1}{2} < \xi _{3} < 1$. Simulations suggests that $\xi _{3}$ is around $0.57$ (see Section 7 in \cite{Law}).
\end{rem}

\subsection{Skorohod Embedding}
In this subsection, we state the strong approximation of Brownian motion by simple random walk derived from the Skorohod embedding (see \cite{Law2} for details).

\begin{prop}\label{skorohod} (\cite{Law2}, Lemma 3.1, Lemma 3.2.)
There exist a probability space $(\Omega , {\cal F} , P )$ containing a $d$-dimensional standard Brownian $B$ and $d$-dimensional simple random walk $S$ such that the following holds. For every $\epsilon > 0$ there exist $\delta > 0$ and $ a < \infty$ such that 
\begin{equation}\label{approx time}
P \big( \sup_{0 \le t \le n} |B(t) - S(td) | \ge n^{\frac{1}{4} + \epsilon } \big) \le a \exp ( - n^{\delta} ).
\end{equation}
Moreover, if we set
\begin{equation*}
T(n) = \inf \{ t : |B(t)|=n \}, \ \ \ \  \tau ( n) = \inf \{ j : |S(j)| \ge n \}
\end{equation*}
then for every $\epsilon > 0$ there exist $\delta > 0$ and $ a < \infty$ such that 
\begin{equation}\label{approx stop}
P \big( \sup_{0 \le t \le T(n)} |B(t) - S(td) | \ge n^{\frac{1}{2} + \epsilon } \big) \le a \exp ( - n^{\delta} ).
\end{equation}  
\end{prop}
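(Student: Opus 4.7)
The plan is to realise the coupling via the classical Skorohod embedding, representing the random walk as the Brownian motion sampled at a sequence of stopping times, and then split the approximation error into (a) concentration of these stopping times around their mean and (b) Brownian oscillation on the corresponding scale.

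First I set up the probability space. Let $B$ be a standard $d$-dimensional Brownian motion and construct an increasing sequence of stopping times $0 = T_{0} \le T_{1} \le T_{2} \le \cdots$ for $B$ such that, setting $S(n) := B(T_{n})$, the process $(S(n))_{n \ge 0}$ is a simple random walk in $\mathbb{Z}^{d}$ and the increments $\tau_{k} := T_{k}-T_{k-1}$ are i.i.d.\ with $E[\tau_{1}] = 1/d$. For $d=1$ this is the classical hitting-time embedding of $\{-1,+1\}$; for $d \ge 2$ one combines a uniform choice of coordinate with coordinate-wise hitting-time rules, arranged so that the law of $B(T_{1})$ is exactly uniform on $\{\pm e_{1},\dots,\pm e_{d}\}$. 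The analytic input I need from this construction is that $\tau_{1}$ has a finite exponential moment, $E[e^{\lambda \tau_{1}}] < \infty$ for some $\lambda > 0$, inherited from exit-time tails of one-dimensional BM from a bounded interval.

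Under this coupling $|B(t)-S(td)| = |B(t)-B(T_{\lfloor dt \rfloor})|$, so it suffices to show that $t$ and $T_{\lfloor dt\rfloor}$ are close and that $B$ does not oscillate much between them. By Bernstein/Chernoff applied to the centred sum $T_{n}-n/d = \sum_{k=1}^{n}(\tau_{k}-1/d)$ (valid because of exponential moments), together with a union bound over $n \le dN$ for any polynomial range,
\begin{equation*}
P\bigl(\sup_{k\le dn}|T_{k}-k/d|\ge n^{1/2+\epsilon/2}\bigr) \le a_{1}\exp(-n^{\delta_{1}}).
\end{equation*}
Combining this with the standard Gaussian increment bound
\begin{equation*}
P\Bigl(\sup_{\substack{0\le s,t\le n\\ |s-t|\le n^{1/2+\epsilon/2}}}|B(s)-B(t)|\ge n^{1/4+\epsilon}\Bigr) \le a_{2}\exp(-n^{\delta_{2}}),
\end{equation*}
and intersecting the two good events yields \eqref{approx time}.

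For the stopped estimate \eqref{approx stop}, the new ingredient is an upper tail on $T(n)$. On the event $\{T(n) > N\}$ the Brownian motion is confined to $\mathcal{B}(n)$ on $[0,N]$, and the principal Dirichlet eigenvalue $\lambda_{1}(\mathcal{B}(n)) = c/n^{2}$ yields $P(T(n)>N)\le c\exp(-c'N/n^{2})$; taking $N = n^{2+\epsilon'}$ makes this $\le a\exp(-n^{\delta'})$. On the complementary event, applying \eqref{approx time} with the larger time horizon $N$ gives error of order $N^{1/4+\epsilon''} = n^{1/2+\epsilon'''}$, which is \eqref{approx stop} after relabelling $\epsilon$'s. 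The main obstacle I anticipate is the first step in $d \ge 2$: one must design a stopping rule whose one-step output law is \emph{exactly} the uniform measure on the $2d$ nearest neighbours while the successive embedding times remain i.i.d.\ with exponential moments; once this is established, the concentration of $T_{n}$, the Gaussian oscillation bound, and the heat-kernel control on $T(n)$ are classical and combine routinely.
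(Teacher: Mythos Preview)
The paper does not supply its own proof of this proposition; it is quoted from \cite{Law2} (Lemmas~3.1 and~3.2 there) and used as a black box. Your overall architecture---Skorohod-type coupling, Bernstein concentration of the embedding times about their mean, a Gaussian oscillation bound over windows of size $n^{1/2+\epsilon}$, and for \eqref{approx stop} the reduction to \eqref{approx time} via the eigenvalue tail $P(T(n)>n^{2+\epsilon})\le a\exp(-n^{\delta})$---is exactly the standard route, and those analytic steps are correct as written.

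The genuine gap is the construction in $d\ge 2$. You posit a single sequence of stopping times with $S(n)=B(T_n)$ and $B(T_1)$ uniform on $\{\pm e_1,\dots,\pm e_d\}$, and you flag this as the main obstacle. In fact it is not an obstacle but an impossibility: single points are polar for Brownian motion in $d\ge 2$, so $P(B(T)=x)=0$ for every $x$ and every (even randomised) stopping time $T$; hence $B(T_1)$ cannot have a discrete law. The coupling in \cite{Law2} does \emph{not} realise $S(n)$ as $B$ at one stopping time. One works coordinate by coordinate: write $S^i(n)=\tilde S^i(N^i(n))$, where $N^i(n)=\#\{k\le n:\text{step }k\text{ moves coordinate }i\}$ and $\tilde S^1,\dots,\tilde S^d$ are independent one-dimensional $\pm 1$ walks (independent also of the coordinate-choice sequence); embed each $\tilde S^i$ in the $i$-th coordinate $B^i$ by one-dimensional Skorohod, getting $\tilde S^i(k)=B^i(\sigma^i_k)$ with i.i.d.\ increments of exponential moments. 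Then $S^i(n)=B^i\bigl(\sigma^i_{N^i(n)}\bigr)$, and $|S(n)-B(n/d)|$ is controlled by combining binomial concentration of $N^i(n)$ about $n/d$, your Bernstein bound on $\sigma^i_k-k$, and the Brownian oscillation estimate, exactly as in your sketch. With the coupling amended this way, the remainder of your argument goes through unchanged.
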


We will be using the strong Markov property at time $T(n)$. However, one slight complication that arises is the fact that $\{ B(t) , S(td) : t \le T(n) \}$ might contain a little information about $B(t)$ beyond time $T(n)$. To overcome this problem, we need the following proposition.

\begin{prop}\label{measurability} (\cite{Law2}, Lemma 3.3.)
There exist $\delta > 0$ and $a < \infty$ such that the following holds. For each $n$, there is an event $\Psi (n)$ with 
\begin{equation*}
P(\Psi (n) ) \ge 1-a \exp (- n^{\delta })
\end{equation*}
such that on the event $\Psi (n)$,
\begin{equation*}
\{ B(t) : t \le \max \{ T(n) , \tau (n) \} \} \cup \{ S(td) : t \le \max \{ T(n) , \tau (n) \} \}
\end{equation*}
and
\begin{equation*}
\{ B(t) : t \ge T(2n) \}
\end{equation*}
are conditionally independent given $B(T(2n))$.
\end{prop}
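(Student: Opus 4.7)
The plan is to define $\Psi(n)$ as the event on which the Skorohod construction of $S$ from $B$ has already generated every SRW step needed in the proposition by Brownian time $T(2n)$, rendering both families of observables measurable with respect to $\mathcal{F}^{B}_{T(2n)}$ (augmented by any independent auxiliary randomness). The stated conditional independence then follows immediately from the strong Markov property of $B$ applied at the stopping time $T(2n)$.

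Write $0 = \sigma_{0} \le \sigma_{1} \le \sigma_{2} \le \cdots$ for the stopping times of $B$ produced by the Skorohod embedding, together with an independent auxiliary $\sigma$-field $\mathcal{G}$ encoding the coordinate choices, so that each $S(k)$ is $\sigma(B(s): s \le \sigma_{k}) \vee \mathcal{G}$-measurable with $E[\sigma_{k}-\sigma_{k-1}] = 1/d$ and sub-exponential increments. Setting $M(n) := \lceil d \cdot \max\{T(n), \tau(n)\}\rceil$, define
\[
\Psi(n) := \bigl\{\max\{T(n), \tau(n)\} \le T(2n)\bigr\} \cap \bigl\{\sigma_{M(n)} \le T(2n)\bigr\}.
\]
On $\Psi(n)$, both $\{B(t): t \le \max\{T(n), \tau(n)\}\}$ and $\{S(td) : t \le \max\{T(n), \tau(n)\}\}$ are measurable with respect to $\mathcal{F}^{B}_{T(2n)} \vee \mathcal{G}$. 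Since $\mathcal{G}$ is independent of $B$ and $T(2n)$ is a stopping time of $B$, the strong Markov property gives that $\{B(T(2n)+s) - B(T(2n)): s \ge 0\}$ is independent of $\mathcal{F}^{B}_{T(2n)} \vee \mathcal{G}$. Translating by $B(T(2n))$ on both sides yields precisely the asserted conditional independence of $\{B(t): t \ge T(2n)\}$ from the observables, given $B(T(2n))$.

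The probability bound $P(\Psi(n)^{c}) \le a \exp(-n^{\delta})$ is obtained by combining Proposition~\ref{skorohod} with standard exit-time tails. Applying the strong approximation with radius $2n$ in place of $n$ yields, off an event of probability $\le a\exp(-n^{\delta})$, the bound $\sup_{t \le T(2n)} |B(t) - S(td)| \le (2n)^{1/2+\epsilon}$; since $|B(T(2n))| = 2n$, this forces $|S(\lfloor T(2n) d\rfloor)| \ge n$, hence $\tau(n) \le T(2n)\cdot d$, and moreover shows that $\sigma_{k} \le T(2n)$ for every $k \le T(2n)\cdot d$ --- in particular for $k = M(n)$. The remaining inequality $\max\{T(n),\tau(n)\} \le T(2n)$ follows analogously: $T(n) \le T(2n)$ is automatic, while $\tau(n) \le T(2n)$ is extracted from the same coupling via the standard Gaussian/Bernstein tails on the $n^{2}$-scale exit times. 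The main obstacle is packaging all three scales --- the Brownian exit time $T(2n)$, the SRW exit time $\tau(n)$, and the Skorohod clock $\sigma_{M(n)}$ --- into a single high-probability event with the sharp exponential-in-$n^{\delta}$ control; this synchronization is exactly what the strong approximation in Proposition~\ref{skorohod} is engineered to deliver, and once it is in hand the strong Markov argument on $\Psi(n)$ is routine.
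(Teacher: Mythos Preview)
The paper does not prove this proposition; it is quoted without proof as Lemma~3.3 of \cite{Law2}. Your approach---defining $\Psi(n)$ as the event on which every Skorohod stopping time needed to generate the relevant SRW steps has already occurred by Brownian time $T(2n)$, so that the observables become $\mathcal{F}^{B}_{T(2n)}\vee\mathcal{G}$-measurable and the strong Markov property of $B$ at $T(2n)$ yields the conditional independence---is precisely the argument in \cite{Law2}, and is correct in outline.

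There is, however, a gap in your justification of the probability bound. You assert that the displacement estimate $\sup_{t\le T(2n)}|B(t)-S(td)|\le (2n)^{1/2+\epsilon}$ ``moreover shows that $\sigma_k\le T(2n)$ for every $k\le T(2n)\cdot d$.'' It does not: that bound controls $|B(k/d)-B(\sigma_k)|$, which says nothing about $|\sigma_k-k/d|$. What you actually need is the concentration of the Skorohod increments themselves---they are i.i.d.\ with finite exponential moments, so $|\sigma_k-k/d|\le k^{1/2+\epsilon}$ off a stretched-exponentially small event. This estimate is the \emph{input} to the proof of Proposition~\ref{skorohod}, not a consequence of its statement; once you cite it directly and combine it with the elementary tails $P(T(2n)>cn^{2})$, $P(\tau(n)>cn^{2})\le a\exp(-n^{\delta})$, the inclusion $\sigma_{M(n)}\le T(2n)$ follows. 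A second, related point: for the strong Markov step to apply cleanly you want $\Psi(n)$ itself to lie in $\mathcal{F}^{B}_{T(2n)}\vee\mathcal{G}$, and this is not obvious from your definition because $M(n)$ depends on $\tau(n)$, which in turn depends on SRW steps whose Skorohod times may or may not precede $T(2n)$. Replacing the random index $M(n)$ by a deterministic one such as $\lceil c n^{2}\rceil$ and intersecting with $\{\tau(n)\le cn^{2},\ T(2n)\le cn^{2}/d\}$ removes the circularity without affecting the probability bound.
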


\subsection{Beurling Estimate}
We need some estimates that say intuitively two random walks that get close each other are very likely intersect. For $d=2$, it is a case of the Beurling estimate. For $d=3$, corresponding estimates were obtained in \cite{Law2}. Here we state them.

Let $B$ be the Brownian motion in $\mathbb{R}^{2}$ and $S$ be the simple random walk in $\mathbb{Z}^{2}$. Then the following are well-known (see \cite{Law3} for the continuous case and \cite{L1} for discrete case).

\begin{prop}\label{beurling estimate}
(i) (\cite{Law3}, Theorem 3.76) There exists a constant $K < \infty$ such that for any $R \ge 1$, any $x \in \mathbb{R}^{2}$ with $|x| \le R$, any $A \subset \mathbb{R}^{2}$ with $[0,R] \subset \{ |z| : z \in A \}$,
\begin{equation}\label{bmbeurlingestimate}
P^{x} ( T(R) < T_{A} ) \le K \big( \frac{|x|}{R} \big) ^{\frac{1}{2}},
\end{equation}
where $T(R) = \inf \{ t \ge 0 : |B(t)| \ge R \}$ and $T_{A} = \inf \{ t \ge 0 : B(t) \in A \}$.  \\
(ii) (\cite{L1}, Theorem 2.5.2.) There exists a constant $K < \infty$ such that for any $n \ge 1$, any $x \in \mathbb{Z}^{2}$ with $|x| \le n$, any connected set $A \subset \mathbb{Z}^{2}$ containing the origin and such that $\sup \{ |z| : z \in A \} \ge n$,
\begin{equation}\label{srwbeurlingestimate}
P^{x} ( \tau (n) < \tau _{A} ) \le K \big( \frac{|x|}{n} \big) ^{\frac{1}{2}},
\end{equation}
where $\tau (n) = \inf \{ j \ge 0 : |S(j)| \ge n \}$ and $\tau _{A} = \inf \{ j \ge 0 : S(j) \in A \}$.

\end{prop}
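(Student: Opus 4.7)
The plan is to establish (i) via the classical Beurling projection theorem combined with an explicit conformal computation, and to establish (ii) via Lawler's discrete Beurling estimate; a naive transfer from (i) to (ii) through the Skorohod coupling of Proposition \ref{skorohod} does not yield the sharp exponent.

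For (i), the first step is a circular symmetrization (Beurling projection): replacing $A$ by the radial slit $A^{*} = [0,R]$ and the starting point $x$ by $-|x|$ only increases the escape probability, so
\[
P^{x}(T(R) < T_{A}) \le P^{-|x|}(T(R) < T_{A^{*}}).
\]
The right-hand side is then an explicit two-dimensional computation: the conformal map $w = z^{1/2}$ (branch cut on the positive real axis) sends the slit disc $\{|z|<R\}\setminus [0,R]$ onto the upper half-disc of radius $\sqrt{R}$, sends $-|x|$ to $i\sqrt{|x|}$, and turns the outer boundary $\{|z|=R\}$ into the upper semicircle. A standard Poisson-kernel estimate in the half-disc (or one more conformal map to a half-plane) shows that the harmonic measure of that semicircle at $i\sqrt{|x|}$ is of order $\sqrt{|x|/R}$, giving (i) with an absolute constant $K$.

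For (ii), the strategy is a discrete Beurling-type estimate as in Theorem 2.5.2 of \cite{L1}, implemented by iterating a one-scale estimate over dyadic annuli between $|x|$ and $n$. Decompose the walk from $x$ into successive excursions across $\partial {\cal B}_{k} \to \partial {\cal B}_{k+1}$ for $k$ with $|x| \lesssim 2^{k} \le n$. On each annulus, the connectedness of $A$ combined with $\sup\{|z|:z\in A\}\ge n$ guarantees that $A$ contains a connected arc crossing the annulus, and a one-scale discrete harmonic-measure estimate (the annular analogue of (i)) produces a per-scale avoidance probability of at most $2^{-1/2}$. Iterating over the $\log_{2}(n/|x|)$ scales then yields the $(|x|/n)^{1/2}$ bound.

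The main obstacle is securing the exponent $1/2$ exactly at the discrete level. In the continuum the $\sqrt{z}$ conformal map hands it to us, but on the lattice, without exact conformal invariance, the one-scale estimate must be performed with care. A clean route is to establish the one-scale (annular) estimate by coupling the excursion to Brownian motion via Proposition \ref{skorohod} and applying (i) on that single annulus: over an annulus of scale $2^{k}$, the Skorohod approximation error is much smaller than $2^{k}$, so the transfer is efficient. This is why one cannot simply combine (i) with Proposition \ref{skorohod} at the macroscopic scale $n$ to deduce (ii): the cumulative approximation error swamps the target accuracy $(|x|/n)^{1/2}$, and the iteration over scales is essential to preserve the sharp exponent.
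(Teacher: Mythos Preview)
The paper does not prove this proposition: it is quoted as a known result, with part (i) cited from \cite{Law3} and part (ii) from \cite{L1}. So there is no ``paper's proof'' to compare against, and the relevant question is only whether your sketch is sound.

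Your treatment of (i) is the standard one and is correct: Beurling's projection theorem reduces to the radial slit, and the square-root map computes the harmonic measure explicitly.

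Your treatment of (ii) has the right skeleton --- the proof in \cite{L1} does proceed by iterating a one-scale annular estimate over the $\log_{2}(n/|x|)$ dyadic scales --- but the mechanism you propose for the one-scale estimate has a gap. Suppose at scale $2^{k}$ you couple the walk to Brownian motion via Proposition~\ref{skorohod} and appeal to (i). The coupling error is of order $2^{k(1/2+\epsilon)}$, so all you learn is that the walk comes within distance $O(2^{k(1/2+\epsilon)})$ of $A$, not that it actually hits $A$. Upgrading ``within distance $r$ of a connected set of diameter $\gtrsim 2^{k}$'' to ``hits that set before exiting ${\cal B}_{k+1}$'' is itself a Beurling-type statement of the same strength as what you are trying to prove, so the argument is circular. (Compare with how the paper handles three-dimensional paths: there, Propositions~\ref{hittable for bm} and~\ref{hittable for srw} are stated as separate facts precisely because proximity does not automatically imply hitting.)

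The proof in \cite{L1} avoids this by establishing the one-scale estimate through purely discrete potential theory: Green's function and capacity bounds for connected sets crossing an annulus give the per-scale avoidance probability directly, with no appeal to a continuum limit. If you want a self-contained argument, that is the route to take; the Skorohod coupling is not the right tool here.
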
 

For $d=3$, there is no useful analogue of Proposition \ref{beurling estimate}. So we need some more work. Let $B, B^{\prime}$ be two independent Brownian motion in $\mathbb{R}^{3}$. For each $\epsilon > 0$ and $b < \infty$, let
\begin{equation*}
Z_{n}= Z_{n}(\epsilon , b) = \sup P^{z} ( B[ 0, T(2n) ] \cap B^{\prime} [0, T^{\prime}(2n) ] = \emptyset \ | \ B^{\prime} [0, T^{\prime}(2n) ] ),
\end{equation*}
where the supremum is over all $z$ with $|z| \le n$ such that
\begin{equation*}
\text{dist} (z, B^{\prime} [0, T^{\prime}(2n) ] ) \le b n^{1-\epsilon}, 
\end{equation*}
and $T(n)$ (resp. $T^{\prime}(n)$) be the first hitting time of $B$ (resp. $B^{\prime}$) to the boundary of disk centered at the origin with radius $n$.
Note that $P^{z}$ denotes the probability with $B(0)=z$ and $Z_{n}$ is a function of $B^{\prime} [0, T^{\prime}(2n) ]$. The following proposition says that Brownian path is a `hittable set' with high probability.

\begin{prop}\label{hittable for bm} (\cite{Law2}, Lemma 2.4.)
For every $M < \infty , \epsilon > 0 , b < \infty$, there exist $\delta > 0$ and $a < \infty$ such that for $|x| \le n$,
\begin{equation}\label{d=3hittablepath}
P^{\prime x}(Z_{n} \ge n^{-\delta } ) \le a n^{- M},
\end{equation}
where $P^{\prime x}$ denotes probability with $B^{\prime}(0) = x$.
\end{prop}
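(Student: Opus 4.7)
The strategy is to rescale so the exit radius becomes $O(1)$, establish a uniform quantitative capacity bound on the three-dimensional Brownian trace, and then deduce the escape-probability bound by iterating a capacity-based hitting estimate across many scales.

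First, I would apply Brownian scaling $\tilde{B}(t) = n^{-1} B(n^2 t)$, $\tilde{B}'(t) = n^{-1} B'(n^2 t)$: the exit time $T(2n)$ becomes $n^2 \tilde{T}(2)$, starting points lie in $\overline{{\cal B}(0,1)}$, and the neighborhood threshold $\mathrm{dist}(z, B') \le b n^{1-\epsilon}$ becomes $\mathrm{dist}(z, \tilde{B}') \le b n^{-\epsilon}$. The claim reduces to showing that with $P'$-probability at least $1 - a n^{-M}$ on $\tilde{B}'$, every eligible $z$ satisfies $P^z(\tilde{B}[0,\tilde{T}(2)] \cap \tilde{B}'[0,\tilde{T}'(2)] = \emptyset) \le n^{-\delta}$.

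The heart of the proof is a uniform local capacity bound for the trace: with probability at least $1 - \tfrac{1}{2} a n^{-M}$ on $\tilde{B}'$, for every $y$ on the path and every dyadic scale $r \in [n^{-\epsilon}, 1]$, the Newtonian capacity in $\mathbb{R}^3$ of $\tilde{B}' \cap {\cal B}(y, r)$ is at least $c_1 r / (\log n)^{c_2}$. For fixed $(y, r)$, this is a standard second-moment estimate on the occupation measure of $\tilde{B}'$, with failure probability $O(r^K)$ for any prescribed $K$; a union bound over a polynomial $n^{-C}$-mesh of candidate $(y, r)$ pairs lifts this to the uniform statement. Given this thickness, the standard capacity-based hitting estimate gives that for any admissible $z$ with nearest path point $y \in \tilde{B}'$, the probability that $\tilde{B}$ exits ${\cal B}(y, 4 b n^{-\epsilon})$ without hitting $\tilde{B}'$ is at most $1 - c_3/(\log n)^{c_2}$. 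Iterating this bound across the $\Theta(\epsilon \log n)$ annular scales between $n^{-\epsilon}$ and $1$ via the strong Markov property yields a no-hit probability at most $(1 - c_3/(\log n)^{c_2})^{\Theta(\epsilon \log n)} \le n^{-\delta}$ for some $\delta = \delta(\epsilon, b, M) > 0$. Finally, to convert the pointwise bound into the supremum defining $Z_n$, I would discretize the set of candidate $z$'s by a mesh of spacing $n^{-C'}$ (with $C'$ large depending on $M$) and union-bound, using continuity of $P^z(\cdots)$ to transfer from the mesh to the continuum.

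The main obstacle is the uniform local capacity bound: the trace of Brownian motion in $\mathbb{R}^3$ has Hausdorff dimension exactly $2$, the critical dimension for $3$D Newtonian capacity, so the local capacity of $\tilde{B}' \cap {\cal B}(y, r)$ only grows like $r$ up to logarithmic corrections. The polynomial decay in the conclusion is possible only because the constraint $\mathrm{dist}(z, \tilde{B}') \le b n^{-\epsilon}$ with $\epsilon > 0$ provides $\Theta(\log n)$ annular scales for the iteration, just enough to convert logarithmic-per-scale losses into polynomial decay overall. Obtaining uniformity of the capacity estimate over \emph{all} points $y$ on the path — not merely a fixed $y$ — is the most delicate step and requires handling the second-moment computation with arbitrarily strong polynomial failure probability, together with care that the covering mesh for $(y,r)$ is not too coarse to capture the worst case.
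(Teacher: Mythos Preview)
The paper does not give its own proof of this proposition; it is quoted from \cite{Law2} in the ``Known Results'' section, so there is no in-paper argument to compare against directly.

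Your overall architecture (rescale, decompose into $\Theta(\epsilon\log n)$ dyadic annuli around the nearest path point, iterate a hitting bound) is the right one, but the arithmetic at the final step does not close. You assert
\[
\bigl(1 - c_3/(\log n)^{c_2}\bigr)^{\Theta(\epsilon\log n)} \;\le\; n^{-\delta},
\]
but the left-hand side is $\exp\bigl(-c(\log n)^{1-c_2}\bigr)$, which is subpolynomial in $n$ for any $c_2>0$. And $c_2=0$ is exactly what a \emph{uniform} capacity floor over all points $y$ on the trace and all scales cannot deliver: the scaled capacity of a single Brownian annulus crossing is an a.s.\ positive random variable that is \emph{not} bounded away from zero, so once you union-bound over $\Theta(\log n)$ essentially independent crossings (let alone a polynomial mesh of centres), the worst crossing forces a logarithmic degradation in the floor. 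That logarithmic loss, which you explicitly allow for, is fatal to the iteration.

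The repair --- and this is how the argument in \cite{Law2} is organised --- is to drop the demand for a uniform floor and instead count ``good'' scales. Call an annular scale good for $B'$ if the crossing of $B'$ through that annulus is hittable with probability at least a fixed $c>0$ by any Brownian motion crossing the same annulus. One shows (i) each scale is good with probability at least $1-p$ for some fixed $p<1$ independent of $n$, and (ii) the good/bad indicators at different scales decouple sufficiently, via the Markov property of $B'$ at the annulus boundaries, that a large-deviations estimate yields at least $c'\epsilon\log n$ good scales with probability $\ge 1-an^{-M}$. On that event the avoidance probability is at most $(1-c)^{c'\epsilon\log n}=n^{-\delta}$. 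The essential difference from your outline is that one \emph{tolerates} a positive fraction of bad scales rather than paying a logarithmic penalty at every scale; the per-scale gain then stays bounded away from zero and the product is genuinely polynomial.
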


Finally, we state an analogue of this proposition for simple random walks. Let $S,S^{\prime}$ be two independent simple random walks in $\mathbb{Z}^{3}$. For each $\epsilon > 0$ and $b < \infty $, let 
\begin{equation*}
Z_{n}^{\sharp}= Z_{n}(\epsilon , b)^{\sharp} = \sup P^{z} ( S[ 0, \tau (2n) ] \cap S^{\prime} [0, \tau ^{\prime}(2n) ] = \emptyset \ | \ S^{\prime} [0, \tau ^{\prime}(2n) ] ),
\end{equation*}
where the supremum is over all $z$ with $|z| \le n$ and 
\begin{equation*}
\text{dist} (z, S^{\prime} [0, \tau ^{\prime}(2n) ] ) \le b n^{1-\epsilon}, 
\end{equation*}
and $\tau (n)$ (resp. $\tau ^{\prime}(n)$) be the first hitting time of $S$ (resp. $S^{\prime}$) to $\partial {\cal B}(n)$.
Again note that $P^{z}$ denotes the probability with $S(0)=z$ and $Z_{n}^{\sharp}$ is a function of $S^{\prime} [0, \tau ^{\prime}(2n) ]$. Then we have the following.

\begin{prop}\label{hittable for srw} (\cite{Law2}, Lemma 2.6.)
For every $M < \infty , \epsilon > 0 , b < \infty$, there exist $\delta > 0$ and $a < \infty$ such that for $|x| \le n$,
\begin{equation}\label{d=3hittablerwpath}
P^{\prime x}(Z_{n}^{\sharp} \ge n^{-\delta } ) \le a n^{- M},
\end{equation}
where $P^{\prime x}$ denotes probability with $S^{\prime}(0) = x$.
\end{prop}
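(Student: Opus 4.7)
The plan is to deduce the random-walk hittability estimate from its Brownian counterpart (Proposition \ref{hittable for bm}) via the Skorohod embedding, in the spirit of the transfers developed in \cite{Law2}. Morally, on a good coupling event the path $S^{\prime}[0, \tau^{\prime}(2n)]$ lies within Hausdorff distance $n^{1/2+\eta}$ of a Brownian path $B^{\prime}[0, T^{\prime}(2n)]$, so $\{Z_{n}^{\sharp} \ge n^{-\delta}\}$ should be comparable to $\{Z_{n} \ge n^{-\delta^{\prime}}\}$ with parameters only slightly relaxed.

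First I would couple $S^{\prime}$ (starting at $x$) with a Brownian motion $B^{\prime}$ via Proposition \ref{skorohod}, and, for each candidate starting point $z$, couple an independent simple random walk $S$ (starting at $z$) with a Brownian motion $B$ (starting at $z$). Fix $\eta \in (0, \epsilon/4)$, and let $\Omega^{*}$ be the intersection of the approximation event \eqref{approx stop} at exponent $\tfrac{1}{2}+\eta$ and the measurability event $\Psi(2n)$ of Proposition \ref{measurability} for the $(S^{\prime}, B^{\prime})$ pair. Then $P^{\prime x}((\Omega^{*})^{c}) \le a \exp(-n^{\delta_{0}})$, which is negligible compared to $n^{-M}$. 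On $\Omega^{*}$, any $z$ admissible for $Z_{n}^{\sharp}(\epsilon, b)$, i.e., with $|z| \le n$ and $\mathrm{dist}(z, S^{\prime}[0, \tau^{\prime}(2n)]) \le bn^{1-\epsilon}$, is also admissible for $Z_{n}(\tilde{\epsilon}, \tilde{b})$ with $\tilde{\epsilon} = \epsilon - 2\eta$ and $\tilde{b} = 2b$, by the triangle inequality.

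Next, apply Proposition \ref{hittable for bm} to $B^{\prime}$ with parameters $\tilde{\epsilon}, \tilde{b}$, and $M$, producing $\delta_{1} > 0$ and an event $\Omega_{B}$ of $P^{\prime x}$-probability at least $1 - a_{1} n^{-M}$ on which $Z_{n}(\tilde{\epsilon}, \tilde{b}) \le n^{-\delta_{1}}$. The key remaining step is to pass from the Brownian bound to the random-walk bound on $\Omega^{*} \cap \Omega_{B}$: for any admissible $z$, using the coupling of $(S, B)$ on a further event $\Omega^{**}(z)$ of exponentially small complement, I would argue
\begin{equation*}
P^{z}(S[0, \tau(2n)] \cap S^{\prime}[0, \tau^{\prime}(2n)] = \emptyset \mid S^{\prime}) \le C\, P^{z}\bigl(B[0, T(2n)] \cap (B^{\prime})^{(\rho)} = \emptyset \bigm| B^{\prime}\bigr) + \mathrm{err}(n),
\end{equation*}
where $(B^{\prime})^{(\rho)}$ is the $\rho$-thickening of $B^{\prime}[0, T^{\prime}(2n)]$ with $\rho = 3n^{1/2+\eta}$, and $\mathrm{err}(n)$ is the $(S, B)$ Skorohod error. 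A further application of Proposition \ref{hittable for bm} to the thickened path (which is at least as large as $B^{\prime}$) controls the right-hand side by $C n^{-\delta_{1}}$, yielding $Z_{n}^{\sharp}(\epsilon, b) \le n^{-\delta_{2}}$ on $\Omega^{*} \cap \Omega_{B}$ for some $\delta_{2} > 0$, from which the conclusion follows after uniform control in $z$ via a discretization of the admissible set.

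The main obstacle is the conditional comparison in the third step: the sigma-algebras generated by $S^{\prime}$ and $B^{\prime}$ do not coincide, and non-intersection of $(S, S^{\prime})$ does not imply non-intersection of $(B, B^{\prime})$ because of near-misses in the $O(n^{1/2+\eta})$ transition layer. Controlling these near-misses requires Proposition \ref{measurability} (to handle the conditional independence after $B^{\prime}$ exits ${\cal B}(2n)$) together with an additional hittability estimate applied to the thickened Brownian path. A secondary technicality is the need to maintain uniformity in $z$ when passing from the pointwise coupling bound to the supremum defining $Z_{n}^{\sharp}$; since admissible $z$'s form a discrete set of cardinality $O(n^{d})$, a union bound suffices thanks to the stretched-exponential decay of the Skorohod failure probability.
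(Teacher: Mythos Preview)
The paper does not supply a proof of this proposition; it is quoted from \cite{Law2}, where the random-walk statement is proved directly by an argument parallel to the Brownian case (a multiscale second-moment estimate, using discrete Green's-function bounds, on the number of dyadic shells in which a walk started near $S'$ actually hits $S'$), rather than by a coupling transfer from Proposition~\ref{hittable for bm}.

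Your coupling route has a genuine gap: the displayed comparison
\[
P^{z}\bigl(S[0,\tau(2n)] \cap S^{\prime}[0,\tau^{\prime}(2n)] = \emptyset \mid S^{\prime}\bigr)
\;\le\; C\,P^{z}\bigl(B[0,T(2n)] \cap (B^{\prime})^{(\rho)} = \emptyset \mid B^{\prime}\bigr) + \mathrm{err}(n)
\]
points the wrong way. On the good coupling event the inclusion one actually obtains is the reverse: if $B$ avoids the $\rho$-thickening $(B')^{(\rho)}$ then $S$ avoids $S'$, which yields only the useless lower bound $P^{z}(S\text{ avoids }S')\ge P^{z}\bigl(B\text{ avoids }(B')^{(\rho)}\bigr)-\mathrm{err}(n)$. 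Conversely, knowing that $B$ hits $(B')^{(\rho)}$ (which is what Proposition~\ref{hittable for bm} gives with high probability) only tells you that $S$ comes within distance $O(n^{1/2+\eta})$ of $S'$, not that it intersects it.

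Closing that last gap---from ``$S$ is $n^{1/2+\eta}$-close to the path $S'$'' to ``$S$ actually hits $S'$ before exiting ${\cal B}(2n)$''---is precisely a hittability statement for the random-walk path at a smaller scale, so the argument as written is circular. To make the transfer work you would need an additional, independent input: either an iterative scheme that repeatedly halves the approach distance (each iteration consuming a fixed fraction of the remaining scales and succeeding except on an event of probability $n^{-\delta}$) together with a base-case estimate at scale $O(1)$, or a direct proof that a walk started within $n^{1-\epsilon'}$ of a \emph{random-walk} path hits it with probability at least $1-n^{-\delta}$. Either of these is essentially the content of Lemma~2.6 of \cite{Law2} itself, which is why that reference proves the random-walk result from scratch rather than via the Skorohod coupling.
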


\subsection{Nonintersecting Brownian motions}
In this subsection, we state convergence theorems for Brownian motion in $\mathbb{R}^{2}$ and $\mathbb{R}^{3}$ obtained in \cite{Law*} and \cite{Law}, respectively. Let $d=2$ or $3$, and $B^{1},B^{2}$ be independent Brownian motions in $\mathbb{R}^{d}$. Let ${\cal D}= \{ z \in \mathbb{R}^{d} : |z| \le 1 \}$ and $ \partial {\cal D} = \{ z \in \mathbb{R}^{d} : |z|=1 \} $. For $K_{1},K_{2} \subset {\cal D}$ and $w=(w_{1},w_{2}) \in \partial {\cal D}^{2}$ with $w_{j} \in K_{j} \cap \partial {\cal D}$, define
\begin{equation*}
A_{n}(K_{1},K_{2}) = \{ B^{1}[0,T^{1}(e^{n})] \cap B^{2}[0,T^{2}(e^{n})]=\emptyset , B^{1}[0,T^{1}(e^{n})] \cap K_{2} = \emptyset , B^{2}[0,T^{2}(e^{n})] \cap K_{1}=\emptyset \},
\end{equation*}
where $T^{i}(R)= \inf \{ t \ge 0 : |B^{i}(t) | \ge R \}$.
Let 
\begin{equation*}
Q_{n}(K ,w ) = e^{n\xi}P^{w_{1},w_{2}}( A_{n}(K_{1},K_{2}) ).
\end{equation*}
Here $\xi=\xi_{d}$ is the intersection exponent defined as in Section \ref{intersection exponent}.
In \cite{Law*} and \cite{Law}, it was shown the following convergence theorems for $d=2$ and $d=3$, respectively.
\begin{prop}\label{lawlerbmresult} (\cite{Law*}, Theorem 1.2 and \cite{Law}, Proposition 4.8.)
Let $d=2$ or $3$. For each $K_{1},K_{2} \subset {\cal D}$ and $w=(w_{1},w_{2}) \in \partial {\cal D}^{2}$ with $w_{j} \in K_{j} \cap \partial {\cal D}$, the limit
\begin{equation}\label{convergencethm}
\lim _{n \rightarrow \infty} Q_{n}(K ,w ) =: Q(K,w)
\end{equation}
exists. Moreover there exist $c < \infty$ and $\beta > 0$ depending only on the dimension such that the following holds.
\begin{align}\label{speedofconvergence}
|Q(K,w) - Q_{n}(K ,w )| \le c e^{-\beta \sqrt{n}} Q(K,w) \ \ \ \ \text{ for } d=2, \\
|Q(K,w) - Q_{n}(K ,w )| \le c e^{-\beta n} Q(K,w) \ \ \ \ \ \ \text{ for } d=3. 
\end{align}
\end{prop}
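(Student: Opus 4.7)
The plan is to use the strong Markov property to factor the conditional probability into an initial-segment weight times a tail non-intersection probability, and then to establish convergence of the tail by Skorohod coupling with Brownian motion, ultimately invoking Proposition \ref{lawlerbmresult}. For $\overline{\gamma}=(\gamma^1,\gamma^2)\in\Gamma(L)$, let $x=\gamma^1(\text{len}\gamma^1)$ and $y=\gamma^2(\text{len}\gamma^2)$, both on $\partial{\cal B}(L)$, and write $A_N=\{(S^1[0,\tau^1(N)],S^2[0,\tau^2(N)])\in\Gamma(N)\}$. Applying the strong Markov property at $\tau^i(L)$ gives
\begin{equation*}
P\bigl((S^1[0,\tau^1(L)],S^2[0,\tau^2(L)])=\overline{\gamma},\,A_N\bigr)=P(\overline{\gamma})\,G_N(\overline{\gamma}),
\end{equation*}
with $P(\overline{\gamma})=(2d)^{-\text{len}\gamma^1-\text{len}\gamma^2}$ and, letting $\widetilde S^1,\widetilde S^2$ be independent walks from $x,y$,
\begin{equation*}
G_N(\overline{\gamma})=P^{x,y}\bigl(\widetilde S^1[0,\widetilde\tau^1(N)]\cap\widetilde S^2[0,\widetilde\tau^2(N)]=\emptyset,\ \widetilde S^i[0,\widetilde\tau^i(N)]\cap\gamma^{3-i}=\emptyset,\ i=1,2\bigr).
\end{equation*}
Summing over $\overline{\gamma}\in\Gamma(L)$ gives $P(A_N)=\sum_{\overline{\gamma}'}P(\overline{\gamma}')G_N(\overline{\gamma}')$, so the conditional probability in \eqref{shuusoku} equals $P(\overline{\gamma})G_N(\overline{\gamma})/\sum_{\overline{\gamma}'}P(\overline{\gamma}')G_N(\overline{\gamma}')$. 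Hence it suffices to show that $N^{\xi}G_N(\overline{\gamma})$ converges, uniformly in $\overline{\gamma}\in\Gamma(L)$, at the rates \eqref{shuusokunohayasa}.

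\textbf{Transfer to Brownian motion.} Couple each $\widetilde S^i$ with a Brownian motion $B^i$ started at $x$ or $y$ using Propositions \ref{skorohod}, \ref{measurability}: off an event of probability $O(\exp(-N^\delta))$, the trajectories agree up to an error $N^{1/2+\epsilon}$ through scale $N$. On this good event, the nonintersection events for $(\widetilde S^1,\widetilde S^2)$ and for $(B^1,B^2)$ relative to $(\gamma^1,\gamma^2)$ can differ only when one of the paths passes within distance $N^{1-\epsilon}$ of another without hitting it. The hittability estimates---the Beurling estimate (Proposition \ref{beurling estimate}) in $d=2$ and Propositions \ref{hittable for bm}, \ref{hittable for srw} in $d=3$---bound such atypical events by $O(N^{-M})$ for any $M$. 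Combined with $G_N(\overline{\gamma})\le cN^{-\xi}$ from Proposition \ref{intersection exp rw}, this yields $|G_N(\overline{\gamma})-G_N^{BM}(\overline{\gamma})|\le cN^{-\xi-\delta'}$, where $G_N^{BM}(\overline{\gamma})$ is the analogous Brownian quantity. Rescaling space by $N^{-1}$ and setting $n=\log N$, $N^{\xi}G_N^{BM}(\overline{\gamma})$ matches (up to a factor reflecting the distribution of the Brownian entry points) the quantity $Q_n(K,w)$ of Proposition \ref{lawlerbmresult}, with $K_i$ the rescaled image of $\gamma^i$ and $w_i$ the corresponding point on the unit sphere.

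\textbf{Rates and extension.} Proposition \ref{lawlerbmresult} supplies $|Q_n-Q|\le ce^{-\beta\sqrt n}Q$ for $d=2$ and $|Q_n-Q|\le ce^{-\beta n}Q$ for $d=3$. With $n=\log N$ these read $e^{-\beta\sqrt{\log N}}$ and $N^{-\beta}$; the Skorohod/hittability remainder $N^{-\delta'}$ is dominated by the former and absorbed into the latter after shrinking $\delta$. Thus $N^{\xi}G_N(\overline{\gamma})$ is Cauchy at the claimed rate, uniformly in $\overline{\gamma}$. Since $N^\xi\sum_{\overline{\gamma}'}P(\overline{\gamma}')G_N(\overline{\gamma}')=N^\xi P(A_N)\ge c_1$ by Proposition \ref{intersection exp rw}, the denominator stays bounded away from zero after normalisation, and dividing produces the convergence \eqref{shuusoku} with the rate \eqref{shuusokunohayasa}. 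The resulting finite-dimensional measures $P^\sharp|_{\Gamma(L)}$ are consistent by construction (any $\overline{\gamma}\in\Gamma(L+1)$ restricts uniquely to an element of $\Gamma(L)$), so Kolmogorov's extension theorem yields a unique probability measure $P^\sharp$ on $\Gamma(\infty)$.

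\textbf{Main obstacle.} The principal difficulty is making the Brownian-to-random-walk transfer uniform over $\overline{\gamma}\in\Gamma(L)$, especially when the endpoints $x,y$ are within $O(1)$ of each other on $\partial{\cal B}(L)$, so that the Skorohod path error dwarfs their separation. As indicated in the introduction, the remedy is to defer the coupling: one first lets the conditioned walks run out to an intermediate scale at which they have separated to distance $\gg L$, controlling the cost of this separation via the lower bound in Proposition \ref{intersection exp rw} together with the hittability estimates, and only then invokes Skorohod on the resulting macroscopic interval. Propagating this argument uniformly in the initial data, and combining the errors across intermediate scales without losing the near-optimal rate $e^{-\delta\sqrt{\log N}}$ (respectively $N^{-\delta}$), is the technical heart of the proof.
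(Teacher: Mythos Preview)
Your proposal does not address the stated Proposition~\ref{lawlerbmresult} at all. That proposition is a \emph{cited} Brownian-motion result, quoted verbatim from \cite{Law*} and \cite{Law}; the paper offers no proof of it, and indeed your own sketch \emph{invokes} it as a black box (``ultimately invoking Proposition~\ref{lawlerbmresult}''). What you have actually written is a proof outline for Theorem~\ref{main theorem}, the simple-random-walk analogue that constitutes the paper's main contribution.

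If Theorem~\ref{main theorem} was your intended target, your high-level strategy coincides with the paper's: strong-Markov decomposition, Skorohod coupling to Brownian motion, Beurling/hittability estimates to bound the coupling error, and then Proposition~\ref{lawlerbmresult} for the Brownian tail. There is, however, an internal inconsistency in your sketch. In the ``Transfer to Brownian motion'' paragraph you couple directly from scale $L$, whereas in your ``Main obstacle'' paragraph you correctly observe that this fails uniformly in $\overline\gamma$ (the endpoints $x,y$ may be $O(1)$ apart, swamped by the $N^{1/2+\epsilon}$ Skorohod error) and that one must first run to an intermediate scale where separation is achieved. The paper implements precisely this deferred coupling: it runs the walks to scale $2^{m/3}$, establishes the separation events $F_m$, $G_m$, $H_m$ there (Lemmas~\ref{djlarge}--\ref{boundaryhit} and \eqref{hittablepath}), and only then applies Skorohod on $[2^{m/3},2^N]$, comparing $R_{m/3,N}$ against the fattened-path Brownian event $J_{m,N}$ via the lengthy case analysis of Section~3. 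Your body paragraphs skip this, so as written the argument has a genuine gap exactly where you yourself flag it.
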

As mentioned, our main result Theorem \ref{main theorem} (or Theorem \ref{cauchy seq} below) is a random walk version of this proposition. Notice that the rate of convergence in Theorem \ref{main theorem} is same as that of Proposition \ref{lawlerbmresult}.

\section{Approximation of non-intersection probabilities}

\subsection{Preliminary}

Fix $L \in \mathbb{N}$ and $ \overline {\gamma}=(\gamma^{1}, \gamma^{2}) \in \Gamma _{L}$. We write $w^{i} = \gamma ^{i} ( \text{len} \gamma^{i})$ for the end point of $\gamma^{i}$. Assume $10L < m<n$. Let $S^{1}, S^{2}$ be two independent simple random walks in $\mathbb{Z}^{d}$ starting at $w^{1}, w^{2}$ respectively. Let $A_{m} ( \overline {\gamma} )$ denote the event
\begin{eqnarray}\label{non intersect} 
A_{m} ( \overline {\gamma} )=\left\{ \begin{array}{ll}
S^{1}[0,\tau ^{1}_{m}] \cap \gamma^{2} = \emptyset , \\
S^{2}[0,\tau ^{2}_{m}] \cap \gamma^{1} = \emptyset , \\
S^{1}[0,\tau ^{1}_{m}] \cap S^{2}[0,\tau ^{2}_{m}] = \emptyset  
\end{array}
\right\}.
\end{eqnarray}
The goal of this section is to prove the following proposition.

\begin{prop}\label{goal}
Let $d=2,3$. For all $L \in \mathbb{N}$ and $\overline {\gamma}=(\gamma^{1}, \gamma^{2}) \in \Gamma _{L}$, there exist $c< \infty$ and $\delta > 0$ such that for all $n> m> 10 L$,
\begin{equation}\label{cauchy}
|2^{(m-L) \xi} P ( A_{m} ( \overline {\gamma} ) ) - 2^{(n-L) \xi} P ( A_{n} ( \overline {\gamma} ) ) | \le c 2^{-\delta m^{\frac{d}{2}-\frac{1}{2}} }.
\end{equation}
\end{prop}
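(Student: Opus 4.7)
The strategy is to compare the discrete non-intersection probability $2^{(m-L)\xi}P(A_m(\overline{\gamma}))$ with its Brownian analogue and invoke the Cauchy estimate for non-intersecting Brownian motions (Proposition \ref{lawlerbmresult}). Under the change of scale $2^m=e^{m\log 2}$ the Brownian rates $e^{-\beta\sqrt{n}}$ and $e^{-\beta n}$ match exactly the target rate $2^{-\delta m^{d/2-1/2}}$ in \eqref{cauchy}, so the strategy is dimensionally consistent.

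Concretely, let $B^1,B^2$ be independent Brownian motions in $\mathbb{R}^d$ starting at $w^1,w^2$, and let $A_m^B(\overline{\gamma})$ be the Brownian analogue of \eqref{non intersect} obtained by replacing each $S^i[0,\tau^i_m]$ by $B^i[0,T^i(2^m)]$ and viewing each $\gamma^i$ as a closed subset of $\mathbb{R}^d$. Rescaling space by $2^{-L}$ places $\overline{\gamma}$ inside the closed unit disk and sends the exit radius to $2^{m-L}$, so Proposition \ref{lawlerbmresult} applied with ``$n=(m-L)\log 2$'' yields
\begin{equation*}
\bigl|2^{(m-L)\xi}P(A_m^B(\overline{\gamma}))-2^{(n-L)\xi}P(A_n^B(\overline{\gamma}))\bigr|\le c\,2^{-\delta(m-L)^{(d-1)/2}}\le c'\,2^{-\delta m^{(d-1)/2}},
\end{equation*}
the last step using $m>10L$. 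This settles the Cauchy estimate on the Brownian side, so by the triangle inequality it suffices to compare $2^{(m-L)\xi}P(A_m(\overline{\gamma}))$ with $2^{(m-L)\xi}P(A_m^B(\overline{\gamma}))$ to within the same error (and similarly at scale $n$).

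For this comparison I would couple $S^i$ with $B^i$ via the Skorohod embedding of Proposition \ref{skorohod} on a common probability space and restrict attention to the good event $\Psi(2^m)$ of Proposition \ref{measurability}, on which $|S^i(td)-B^i(t)|\le\Delta:=2^{m(1/2+\epsilon)}$ uniformly in $t\le T^i(2^m)$. The complement of $\Psi(2^m)$ contributes at most $a\exp(-2^{m\delta})$, negligible even after multiplication by $2^{m\xi}$. On $\Psi(2^m)$ the symmetric difference $A_m(\overline{\gamma})\triangle A_m^B(\overline{\gamma})$ is contained in the event that one of $S^i,B^i$ passes within distance $\Delta$ of an opposing path or of $\overline{\gamma}$ without hitting it. For $d=2$ this event is controlled by applying the Beurling estimate (Proposition \ref{beurling estimate}) at the first close-encounter time together with a dyadic decomposition in the distance to the target set; for $d=3$ one uses the hittable-set estimates (Propositions \ref{hittable for bm}, \ref{hittable for srw}), which provide polynomial decay of arbitrary order in the ambient scale.

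The principal obstacle is that the raw Skorohod error $\Delta$ can greatly exceed the scale $2^L$ of the initial configuration $\overline{\gamma}$, so a naive comparison of the $\overline{\gamma}$-avoidance events at scale $2^m$ loses too much after normalization by $2^{m\xi}$. The resolution, as foreshadowed in the introduction, is to exploit the fact that conditional on non-intersection the walks $S^1,S^2$ typically separate to distance of order $2^m$ at time $\tau^i_m$; the Skorohod comparison is really needed only on this ``post-separation'' portion of the path, where $\Delta$ is a factor $2^{-m(1/2-\epsilon)}$ smaller than the relevant spatial scale. Combining this separation estimate with the Beurling or hittable-set bound on the close-encounter event yields the desired $2^{-\delta m^{(d-1)/2}}$ error and completes the proof of \eqref{cauchy}.
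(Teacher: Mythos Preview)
Your high-level strategy is correct and matches the paper's: reduce to the Brownian Cauchy estimate of Proposition~\ref{lawlerbmresult} and control the random-walk--to--Brownian discrepancy via Skorohod coupling together with Beurling/hittable-set bounds. However, the execution as described has two genuine gaps.

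First, the Brownian event $A_m^B(\overline{\gamma})$ you define is not the right comparison object. The lattice paths $\gamma^i$ are polar for Brownian motion in $\mathbb{R}^d$, $d\ge 2$, so the constraints $B^i[0,T^i(2^m)]\cap\gamma^{3-i}=\emptyset$ hold almost surely and carry no information; your $A_m^B$ collapses to mere mutual avoidance of $B^1,B^2$ and no longer matches the structure of $A_m$. The paper replaces $\gamma^i$ (together with an initial segment of the walk) by a \emph{fattened} set $\text{PATH}^i_f$ of radius $2^{11m/60}$, and the Brownian event $J_{m,N}$ requires $B^i$ to avoid $\text{PATH}^{3-i}_f$. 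This fattening is what makes the Brownian constraint nontrivial and simultaneously absorbs the Skorohod error.

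Second, and more seriously, your resolution invokes separation at time $\tau^i_m$, i.e.\ at the terminal scale of the event $A_m$ itself; there is no ``post-separation portion'' left, so the argument as written cannot close. The paper's key structural step is to split at an \emph{intermediate} scale $m/3$. One conditions on the random-walk paths up to $\tau^i_{m/3}$, restricted to a good event $F_m\cap G_m\cap H_m$ on which the endpoints $w^i_{m/3}:=S^i(\tau^i_{m/3})$ are separated by at least $2^{0.99m/3}$, and then starts \emph{fresh} Brownian motions from $w^1_{m/3},w^2_{m/3}$, coupled via Skorohod to the continuations $\overline{S}^i$ of the walks. The inner part $A_{m/3}(\overline{\gamma})$ is common to both $P(A_m(\overline{\gamma}))$ and $P(A_n(\overline{\gamma}))$ and factors out by the strong Markov property; only the outer events $R_{m/3,m}$ and $R_{m/3,n}$ need to be compared with the Brownian events $J_{m,m}$, $J_{m,n}$ (Proposition~\ref{upper-bound}). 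It is on this outer portion that the Skorohod error, of order $2^{(1/2+\epsilon)k}$ at scale $k\ge m/3$, is genuinely small relative to both the separation $2^{0.99m/3}$ and the fattening radius $2^{11m/60}$. Proposition~\ref{lawlerbmresult} is then applied not to $2^{-L}\overline{\gamma}$ but to the random configuration $(2^{-m/3}\text{PATH}^1_f,\,2^{-m/3}\text{PATH}^2_f)$ with starting points $2^{-m/3}w^i_{m/3}$. Without this intermediate-scale decomposition your coupling from $w^i$ cannot be controlled: near the start the walks are within $O(2^L)$ of each other while the Skorohod error is already of order $2^{m/2}\gg 2^L$.
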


\subsection{Several Lemmas}

For $\frac{m}{3} \le j \le \frac{m}{2}$, let
\begin{equation}\label{dj}
D_{j}= \min \{ \text{dist} ( S^{1}( \tau ^{1}_{j} ) , S^{2}[0, \tau ^{2}_{j}] ), \text{dist} ( S^{2}( \tau ^{2}_{j}) , S^{1}[0, \tau ^{1}_{j}] ) \}
\end{equation}

\begin{lem}\label{djlarge}
There exist $c < \infty$ and $\delta > 0$ such that for all $N \ge m$,
\begin{equation}\label{0.99}
P ( A_{N} (\overline {\gamma}) , D_{j} \le 2^{0.99j} ) \le c 2^{-(N-L)\xi} 2^{-\delta j},
\end{equation}
for each $\frac{m}{3} \le j \le \frac{m}{2}$.
\end{lem}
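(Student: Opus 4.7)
The plan is to condition on the paths up to scale $2^j$ and then bound $P(A_N(\overline{\gamma})\mid {\cal F}_j)$ by combining an \emph{avoidance estimate} at scale $j$, which exploits the smallness of $D_j$, with a standard \emph{non-intersection estimate} from scale $j$ up to scale $N$. By symmetry I reduce to the case $\text{dist}(S^1(\tau^1_j), S^2[0,\tau^2_j]) \le 2^{0.99j}$; write $y := S^1(\tau^1_j)$ and let $z \in S^2[0,\tau^2_j]$ be a closest point to $y$. Conditioning on ${\cal F}_j := \sigma(S^1[0,\tau^1_j], S^2[0,\tau^2_j])$ and applying strong Markov at $\tau^1_j, \tau^2_j$, the task reduces to estimating $P(A_N(\overline{\gamma}) \mid {\cal F}_j)$ on the event $A_j(\overline{\gamma}) \cap \{\text{dist}(y, S^2[0,\tau^2_j]) \le 2^{0.99j}\}$.

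For the avoidance factor in $d=2$, let $\sigma^1$ be the first time after $\tau^1_j$ at which $S^1$ exits the ball ${\cal B}(z, c_0 \cdot 2^{j-1})$ for a small $c_0$; since this ball is contained in ${\cal B}_{j+2}$, we have $A_N(\overline{\gamma}) \subset \{S^1[\tau^1_j, \sigma^1] \cap S^2[0,\tau^2_j] = \emptyset\}$. After shifting the origin to $z$, the set $S^2[0,\tau^2_j]$ is connected with diameter of order $2^j$ (it joins $w^2$, where $|w^2| \le 2^{L+1} \ll 2^j$, to $\partial {\cal B}_j$), so it reaches radius at least $c\cdot 2^{j-1}$ from $z$; Proposition~\ref{beurling estimate}(ii) then yields $P(S^1[\tau^1_j, \sigma^1] \cap S^2[0,\tau^2_j] = \emptyset \mid {\cal F}_j) \le K\bigl(2^{0.99j}/(c_0\cdot 2^{j-1})\bigr)^{1/2} \le c\cdot 2^{-\delta j}$. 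For $d=3$ I apply Proposition~\ref{hittable for srw} at scale $n=2^{j+1}$ with $b=1$ and $\epsilon$ small: on the high-probability event $\{Z_n^{\sharp} \le n^{-\delta}\}$ one gets $P(S^1[\tau^1_j, \tau^1_{j+2}] \cap S^2[0,\tau^2_{j+2}] = \emptyset \mid {\cal F}_j, S^2[\tau^2_j, \tau^2_{j+2}]) \le 2^{-\delta j}$, while the exceptional event of probability $\le a(2^{j+1})^{-M}$ is absorbed by choosing $M$ large. Then applying strong Markov at $(\tau^1_{j+2}, \tau^2_{j+2})$ together with Proposition~\ref{intersection exp rw}(3.4) (with $m=2^{j+2}$, $n=2^N$) bounds the subsequent non-intersection probability by $c\cdot 2^{-(N-j)\xi}$, and the two factors multiply to give $P(A_N(\overline{\gamma}) \mid {\cal F}_j) \le c\cdot 2^{-\delta j - (N-j)\xi}$ on the relevant event.

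Taking expectation and using $P(A_j(\overline{\gamma})) \le c\cdot 2^{-(j-L)\xi}$, itself a consequence of Proposition~\ref{intersection exp rw}(3.4) applied with $m=2^{L+1}, n=2^j$ (since $A_j$ is a sub-event of non-intersection of the two walks), yields the desired bound $c\cdot 2^{-(N-L)\xi}\cdot 2^{-\delta j}$. The main obstacle is the correlation between the avoidance and non-intersection events: I must insert an intermediate stopping time (here the exit from ${\cal B}_{j+2}$) at which both walks still lie at scale $\asymp 2^j$, so that the Beurling/hittable estimate produces the factor $2^{-\delta j}$ while Proposition~\ref{intersection exp rw}(3.4) can still be applied afterwards with $m\asymp 2^j$ to give $2^{-(N-j)\xi}$. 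A secondary technicality in $d=3$ is that Proposition~\ref{hittable for srw} controls the probability of avoiding $S^2[0,\tau^2_{j+2}]$ rather than the smaller sub-segment $S^2[0,\tau^2_j]$, and is only a high-probability statement about $S^2$; both issues are handled by passing to the enlarged scale $j+2$ (which costs only a bounded constant in the $\xi$ factor) and by using $P(A_j(\overline{\gamma})) \le c\cdot 2^{-(j-L)\xi}$ to absorb the exceptional event.
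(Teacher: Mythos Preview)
Your argument is correct and follows essentially the same strategy as the paper: first peel off the non-intersection cost $c\,2^{-(N-j)\xi}$ from scale $j$ to $N$ via the strong Markov property and Proposition~\ref{intersection exp rw}, then show that on $\{D_j\le 2^{0.99j}\}$ the probability that $S^1$ avoids $S^2[0,\tau^2_{j+1}]$ between scales $j$ and $j+1$ (or $j+2$) is at most $c\,2^{-\delta j}$, and finally use $P(A_j(\overline\gamma))\le c\,2^{-(j-L)\xi}$. The only cosmetic difference is that the paper applies Proposition~\ref{hittable for srw} uniformly, whereas you (appropriately, since that proposition is stated for $d=3$) split off the $d=2$ case and handle it with the deterministic Beurling estimate; your intermediate scale $j+2$ instead of the paper's $j+1$ is immaterial.
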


\begin{proof}
It is enough to show that 
\begin{equation}\label{0.99i}
P( A_{N} (\overline {\gamma}), \text{dist} ( S^{1}( \tau ^{1}_{j}) , S^{2}[0, \tau ^{2}_{j}] )  \le 2^{0.99j} ) \le  c 2^{-(N-L)\xi} 2^{-\delta j}.
\end{equation}
By the strong Markov property,
\begin{equation*}
P( A_{N} (\overline {\gamma}), \text{dist} ( S^{1}( \tau ^{1}_{j}) , S^{2}[0, \tau ^{2}_{j}] ) \le 2^{0.99j} ) \le c 2^{-(N-j-1)\xi} P( A_{j+1} (\overline {\gamma}), \text{dist} ( S^{1}( \tau ^{1}_{j}) , S^{2}[0, \tau ^{2}_{j}] ) \le 2^{0.99j} ).
\end{equation*}
Applying Proposition \ref{hittable for srw} with $\epsilon = 0.01$, $b=1$, $S=S^{1}$ and $S^{\prime}=S^{2}$, we see that there exist $\delta > 0$ and $c < \infty$ such that
\begin{align*}
 &P( A_{j+1} (\overline {\gamma}), \text{dist} ( S^{1}( \tau ^{1}_{j}) , S^{2}[0, \tau ^{2}_{j}] ) \le 2^{0.99j})\\
 &\le P (  Z^{\sharp}_{2^{j}}(0.01, 1) \ge 2^{-\delta j} ) + P( A_{j+1} (\overline {\gamma}), \text{dist} ( S^{1}( \tau ^{1}_{j}) , S^{2}[0, \tau ^{2}_{j}] ) \le 2^{0.99j}, Z^{\sharp}_{2^{j}}(0.01, 1) \le 2^{-\delta j}) \\
 &\le c 2^{-6j} + P( A_{j+1} (\overline {\gamma}), \text{dist} ( S^{1}( \tau ^{1}_{j}) , S^{2}[0, \tau ^{2}_{j}] ) \le 2^{0.99j}, Z^{\sharp}_{2^{j}}(0.01, 1) \le 2^{-\delta j}).
 \end{align*}
 By the strong Markov property,
 \begin{equation*}
  P( A_{j+1} (\overline {\gamma}), \text{dist} ( S^{1}( \tau ^{1}_{j}) , S^{2}[0, \tau ^{2}_{j}] ) \le 2^{0.99j}, Z^{\sharp}_{2^{j}}(0.01, 1) \le 2^{-\delta j}) \le 2^{-\delta j} P( A_{j} (\overline {\gamma}) ).
  \end{equation*}
  Since $P( A_{j} (\overline {\gamma}) ) \le c 2^{-(j-L)\xi}$, the lemma is finished.

\end{proof}

Let 
\begin{equation}\label{eventdj}
F_{m}=\{ D_{\frac{m}{3}} \ge 2^{\frac{0.99m}{3}} \}.
\end{equation}
By Lemma \ref{djlarge}, there exists $\delta > 0$ such that $P ( A_{N} (\overline {\gamma}) , F_{m}^{c} ) \le c 2^{-(N-L)\xi} 2^{-\delta m}$ for every $N \ge m$.

For each $i=1,2$, define
\begin{equation*}
\sigma ^{i}= \sigma ^{i}_{m}= \inf \{ k \ge \tau ^{i} (2^{\frac{m}{3}}-2^{\frac{2m}{9}}) : S^{i}(k) \in \partial {\cal B} ( S^{i}(\tau ^{i} (2^{\frac{m}{3}}-2^{\frac{2m}{9}})) , 2^{\frac{m}{4}}) \}.
\end{equation*}

\begin{lem}\label{boundaryhit}
There exist $\delta > 0$ and $c < \infty$ such that for each $N \ge m$,
\begin{equation}\label{hitproba}
P( A_{N} (\overline {\gamma}) , \sigma ^{i} < \tau ^{i} _{\frac{m}{3}} ) \le c 2^{-(N-L)\xi} 2^{-\delta m}.
\end{equation}
\end{lem}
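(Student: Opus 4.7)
The plan is to separate the non-intersection event $A_N(\overline{\gamma})$ from the geometric bad event $\{\sigma^i < \tau^i_{m/3}\}$ by two applications of the strong Markov property, and then bound the local geometric probability.

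First, the strong Markov property at $(\tau^1_{m/3}, \tau^2_{m/3})$, combined with Proposition \ref{intersection exp rw} applied to the continuation of the two walks from their positions on $\partial \mathcal{B}(2^{m/3})$ up to scale $2^N$, gives
\[
P(A_N(\overline{\gamma}) \cap \{\sigma^i < \tau^i_{m/3}\}) \le c \cdot 2^{-(N - m/3)\xi} \cdot P(A_{m/3}(\overline{\gamma}) \cap \{\sigma^i < \tau^i_{m/3}\}),
\]
where the requirement that the continuations avoid $\gamma^1 \cup \gamma^2$ is dropped (this only weakens the event on the right).

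Second, fix $i = 1$ (the other case is symmetric) and set $T = \tau^1(2^{m/3} - 2^{2m/9})$. The event $\{\sigma^1 < \tau^1_{m/3}\}$ is measurable with respect to $(S^1(s))_{s \ge T}$, while the ``pre-$T$'' non-intersection event
\[
E_0 := \{S^1[0, T] \cap S^2[0, \tau^2_{m/3}] = \emptyset,\ S^1[0, T] \cap \gamma^2 = \emptyset,\ S^2[0, \tau^2_{m/3}] \cap \gamma^1 = \emptyset\}
\]
satisfies $A_{m/3}(\overline{\gamma}) \subset E_0 \subset A_{m/3-1}(\overline{\gamma})$ (the second inclusion uses $2^{m/3} - 2^{2m/9} \ge 2^{m/3-1}$ for $m$ large). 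By independence of $S^1, S^2$ and the strong Markov property for $S^1$ at $T$, together with Proposition \ref{intersection exp rw},
\[
P(A_{m/3}(\overline{\gamma}) \cap \{\sigma^1 < \tau^1_{m/3}\}) \le c \cdot 2^{-(m/3 - L)\xi} \cdot \sup_{x_0} P^{x_0}(\sigma^1 < \tau^1_{m/3}),
\]
where the supremum is over $x_0$ with $|x_0| \in [2^{m/3} - 2^{2m/9},\, 2^{m/3} - 2^{2m/9} + 1]$.

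Third, I will show $\sup_{x_0} P^{x_0}(\sigma^1 < \tau^1_{m/3}) \le c \cdot 2^{-\delta m}$ for some $\delta > 0$. This expresses that a walk starting within distance $2^{2m/9}$ of $\partial \mathcal{B}(2^{m/3})$ is very unlikely to travel distance $2^{m/4}$ from its starting point before exiting $\mathcal{B}(2^{m/3})$. For $d = 2$, translating a nearest boundary point $y_1$ of $x_0$ to the origin puts the exterior of $\mathcal{B}(2^{m/3})$ into the framework of Proposition \ref{beurling estimate}(ii) and yields an upper bound of order $(2^{2m/9}/2^{m/4})^{1/2} = 2^{-m/72}$. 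For $d = 3$, I will instead use a gambler's-ruin estimate in the radial direction at $y_1$ -- either by a direct one-dimensional projection/martingale argument on the inward normal, or by transferring the corresponding Brownian estimate via the Skorohod embedding in Proposition \ref{skorohod}, noting that the coupling error of order $(2^{m/3})^{1/4 + \epsilon}$ is much smaller than $2^{2m/9}$, so nothing is lost in the transfer.

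Combining the three estimates proves the lemma with a new $\delta > 0$. The main obstacle is the geometric bound in $d = 3$, where no direct Beurling analogue is available and one must rely either on a careful radial martingale computation or on passing through the Brownian coupling.
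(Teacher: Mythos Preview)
Your proposal is correct and follows essentially the same route as the paper: two strong Markov reductions (first from scale $N$ down to $m/3$, then from $\tau^{i}(2^{m/3}-2^{2m/9})$) followed by the local geometric bound $P^{x_{0}}(\sigma^{i}<\tau^{i}_{m/3})\le c\,2^{-\delta m}$. Your intermediate event $E_{0}$ with $A_{m/3}(\overline\gamma)\subset E_{0}\subset A_{m/3-1}(\overline\gamma)$ is just a repackaging of the paper's use of $\sigma^{i}>\tau^{i}_{m/3-1}$.

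One comment on the third step: the dimension split is unnecessary, and the paper does not make it (it simply writes ``it is easy to see''). A single argument valid for all $d\ge 2$ is the half-space comparison: since $\mathcal{B}(0,2^{m/3})\subset H:=\{y:\,y\cdot\hat{x}_{0}<2^{m/3}\}$, one has $\tau^{i}_{m/3}\le\tau_{H}$ and hence $P^{x_{0}}(\sigma^{i}<\tau^{i}_{m/3})\le P^{x_{0}}(\sigma^{i}<\tau_{H})$. In the half-space, projecting onto $\hat{x}_{0}$ gives a one-dimensional martingale at depth $r\asymp 2^{2m/9}$ from $\partial H$, and a standard optional-stopping/gambler's-ruin argument (together with an elementary control of the orthogonal component, or equivalently a time estimate for $\sigma^{i}$) yields $P^{x_{0}}(\sigma^{i}<\tau_{H})\le C\,r/\rho=C\,2^{-m/36}$. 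Your Beurling route in $d=2$ is a valid alternative giving $2^{-m/72}$, and your Skorohod transfer in $d=3$ would also work but is heavier than needed; note that a bare ``gambler's ruin in the radial direction'' is not quite sufficient on its own without the half-space comparison or some control of the tangential displacement.
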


\begin{proof}
By the strong Markov property,
\begin{equation*}
P( A_{N} (\overline {\gamma}) , \sigma ^{i} < \tau ^{i} _{\frac{m}{3}} ) \le c 2^{-(N-\frac{m}{3})\xi} P( A_{\frac{m}{3}} (\overline {\gamma}) , \sigma ^{i} < \tau ^{i} _{\frac{m}{3}} ).
\end{equation*}
Since $\sigma ^{i} > \tau ^{i} _{\frac{m}{3}-1}$, we see that
\begin{equation*}
P( A_{\frac{m}{3}} (\overline {\gamma}) , \sigma ^{i} < \tau ^{i} _{\frac{m}{3}} ) \le E_{3-i} \big (E_{i} \big( {\bf 1} \{ A_{\frac{m}{3}-1} (\overline {\gamma})\} P_{i}^{S^{i}(\tau ^{i} (2^{\frac{m}{3}}-2^{\frac{2m}{9}}))}(\sigma ^{i} < \tau ^{i} _{\frac{m}{3}}) \big) \big).
\end{equation*}
It is easy to see that there exist $ \delta > 0$ and $ c < \infty$ such that
\begin{equation*}
P_{i}^{S^{i}(\tau ^{i} (2^{\frac{m}{3}}-2^{\frac{2m}{9}}))}(\sigma ^{i} < \tau ^{i} _{\frac{m}{3}}) \le c 2^{-\delta m},
\end{equation*}
and the lemma is proved.

\end{proof}

Let $G_{m}= \{ S^{i}[\tau ^{i} (2^{\frac{m}{3}}-2^{\frac{2m}{9}}), \tau ^{i} _{\frac{m}{3}}] \subset {\cal B} ( S^{i}(\tau ^{i} (2^{\frac{m}{3}}-2^{\frac{2m}{9}})), 2^{\frac{m}{4}}), \text{ for } i=1,2 \}$ From Lemma \ref{boundaryhit}, we have
\begin{equation}\label{gm}
P( A_{N} (\overline {\gamma}), G_{m}^{c} ) \le c 2^{-(N-L)\xi} 2^{-\delta m}.
\end{equation}

Finally, let 
\begin{equation}\label{gosa}
Z^{i}_{m}= \sup P^{z}( S[0,\tau_{ \frac{m}{3} } ] \cap S^{i}[0, \tau_{ \frac{m}{3} }] = \emptyset ),
\end{equation}
where the supremum is over all $z$ with 
\begin{equation*}
\text{dist} (z, S^{i}[0, \tau ^{i} (2^{\frac{m}{3}}-2^{\frac{2m}{9}}) ] ) \le 2^{\frac{11m}{60}}. 
\end{equation*}
Note that $Z^{i}_{m}$ is a function of $S^{i}[0, \tau_{ \frac{m}{3} }]$. By Proposition \ref{hittable for srw}, we see that there exist $ \delta >0$ and $c < \infty$ such that
\begin{equation}\label{hittablepath}
P_{i}( Z^{i}_{m} \ge 2^{-\delta m} ) \le c 2^{-6m}.
\end{equation}
Therefore, if we set $H^{i}_{m} = \{ Z^{i}_{m} < 2^{-\delta m} \}$ and $H_{m} = H^{1}_{m} \cap H^{2}_{m}$, we have
\begin{equation}\label{hm}
P( A_{N} (\overline {\gamma}), H_{m}^{c} ) \le c 2^{-(N-L)\xi} 2^{-\delta m}.
\end{equation}

\subsection{Coupling}
Using the strong Markov property, we see that
\begin{equation}\label{stmarkov}
P( A_{N} (\overline {\gamma}), F_{m} , G_{m} , H_{m} )  
= E\big( {\bf 1} \{  A_{\frac{m}{3}} (\overline {\gamma}) , F_{m} , G_{m} , H_{m} \} P^{S^{1}(\tau^{1}_{ \frac{m}{3} }), S^{2}(\tau^{2}_{ \frac{m}{3} }) 
}_{1,2} ( R_{\frac{m}{3} , N} ) \big),
\end{equation}
where we denote $R_{\frac{m}{3} , N}$ be the event
\begin{eqnarray}
R_{\frac{m}{3} , N}= \left\{
\begin{array}{ll}
\overline{S}^{1}[0, \tau^{1}_{N}] \cap ( S^{2}[0,\tau^{2}_{ \frac{m}{3} }] \cup \gamma^{2} ) = \emptyset    \\
\overline{S}^{2}[0, \tau^{2}_{N}] \cap ( S^{1}[0,\tau^{1}_{ \frac{m}{3} }] \cup \gamma^{1} ) = \emptyset    \\
\overline{S}^{1}[0, \tau^{1}_{N}] \cap \overline{S}^{2}[0, \tau^{2}_{N}] = \emptyset
\end{array}
\right\}.
\end{eqnarray} 
Here $\overline{S}^{1}$ and $\overline{S}^{2}$ are independent simple random walks starting at $S^{1}(\tau^{1}_{ \frac{m}{3} })$ and $S^{2}(\tau^{2}_{ \frac{m}{3} })$, respectively, and we use same notation $\tau ^{i} (R) , \tau ^{i}_{k}$ for the hitting time of $\overline{S}^{i}$. More precisely, let
\begin{equation*}
\tau^{i} (R) =\inf \{ j \ge 0 : \overline{S}^{i} \in \partial {\cal B}(R) \}
\end{equation*}
and $\tau ^{i}_{k} =\tau ^{i}(2^{k})$.
Throughout this section we will let $(B^{1}, \overline{S}^{1})$ and $(B^{2}, \overline{S}^{2})$ be two independent Brownian motion - random walk
pairs coupled as in Section 2.2. Assume $B^{i}(0)=\overline{S}^{i}(0)= S^{i}(\tau^{i}_{ \frac{m}{3} })=:w^{i}_{m/3}$. Let
\begin{equation}\label{bmtime}
T^{i}(R)= \inf \{ t \ge 0 : |B^{i}| =R  \},
\end{equation}
and $T^{i}_{k} = T^{i}(2^{k})$.
From now on, we assume the event $A_{\frac{m}{3}} (\overline {\gamma}) \cap F_{m} \cap G_{m} \cap H_{m}$ holds and compare the probability that two Brownian motions do not intersect each other with the probability that simple random walks do not intersect. For this purpose, let
\begin{equation}\label{fatpath}
\text{PATH}^{i}_{f}=\text{PATH}^{i}_{f,m}= \{ z\in \mathbb{R}^{d} : \text{dist}( z, S^{i}[0,\tau^{i}_{ \frac{m}{3} }] \cup \gamma^{i} ) \le 2^{\frac{11m}{60}} \}.
\end{equation}
be a fattened path of $S^{i}[0,\tau^{i}_{ \frac{m}{3} }] \cup \gamma^{i}$. Set
\begin{eqnarray}\label{firstintrw}
\beta = \inf \left\{ \frac{m}{3} \le k \le N :
\begin{array}{ll}
\overline{S}^{1}[\tau ^{1}_{k}, \tau^{1}_{N}] \cap ( \overline{S}^{2}[0, \tau ^{2}_{k}]  \cup S^{2}[0,\tau^{2}_{ \frac{m}{3} }] \cup \gamma^{2} ) = \emptyset    \\
\overline{S}^{2}[\tau ^{2}_{k}, \tau^{2}_{N}] \cap ( \overline{S}^{1}[0, \tau ^{1}_{k}]  \cup S^{1}[0,\tau^{1}_{ \frac{m}{3} }] \cup \gamma^{1} ) = \emptyset    \\
\overline{S}^{1}[\tau ^{1}_{k}, \tau^{1}_{N}] \cap \overline{S}^{2}[\tau ^{2}_{k}, \tau^{2}_{N}] = \emptyset
\end{array}
\right\},
\end{eqnarray}
where $\beta = \infty $ if no such $k$ exists. Note that $\beta = \infty $ implies $\overline{S}^{1}(\tau^{1}_{N}) = \overline{S}^{2}(\tau^{2}_{N})$ and 
\begin{equation*}
P^{w^{1}_{m/3},w^{2}_{m/3}}_{1,2}( \overline{S}^{1}(\tau^{1}_{N}) = \overline{S}^{2}(\tau^{2}_{N})) \le c 2^{-N}.
\end{equation*}
By definition of $\beta$, we see that $\beta = \frac{m}{3}$ implies that $R_{\frac{m}{3} , N}$ holds.
Therefore, if we let $J_{m,N}$ be the event
\begin{equation}\label{bmnotint}
J_{m,N}= \{ B^{1}[0, T^{1}_{N}] \cap \text{PATH}^{2}_{f} = \emptyset , B^{2}[0, T^{2}_{N}] \cap \text{PATH}^{1}_{f} = \emptyset , B^{1}[0, T^{1}_{N}] \cap B^{2}[0, T^{2}_{N}] = \emptyset \},
\end{equation}
then
\begin{equation}\label{sepcase}
P^{w^{1}_{m/3},w^{2}_{m/3}}_{1,2} ( J_{m,N} ) \le P^{w^{1}_{m/3},w^{2}_{m/3}}_{1,2} (R_{\frac{m}{3} , N} ) + \textstyle\sum\limits_{ k= \frac{m}{3} +1}^{N} P^{w^{1}_{m/3},w^{2}_{m/3}}_{1,2} ( J_{m,N} , \beta = k ) + c 2^{-N}. 
\end{equation}
Since 
\begin{equation*}
\min \{ \text{dist} ( w^{1}_{m/3} , \text{PATH}^{2}_{f} ) , \text{dist} ( w^{2}_{m/3} , \text{PATH}^{1}_{f} ) \} \ge 2^{\frac{0.99m}{3}-1}
\end{equation*}
on the event $F_{m}$, we see that $P^{w^{1}_{m/3},w^{2}_{m/3}}_{1,2} ( J_{m,N} ) > 0 $.
In this section, we will estimate $P^{w^{1}_{m/3},w^{2}_{m/3}}_{1,2} ( J_{m,N} , \beta = k ) $ for $ \frac{m}{3} < k \le N$ assuming that $F_{m}, G_{m}$ and $H_{m}$ hold.

\subsubsection{ Bounds for $  \frac{m}{3} < k \le \frac{21m}{60} $}\label{sub1}
Let $ \frac{m}{3} < k \le \frac{21m}{60} $. It is easy to see that $\beta = k$ implies that 
\begin{align*}
\overline{S}^{1}[\tau^{1}_{k-1}, \tau^{1}_{k}] \cap (\overline{S}^{2}[0, \tau^{2}_{k}&] \cup S^{2}[0,\tau^{2}_{ \frac{m}{3} }] \cup \gamma^{2} ) \neq \emptyset \\
&\text{or} \\
\overline{S}^{2}[\tau^{2}_{k-1}, \tau^{2}_{k}] \cap (\overline{S}^{1}[0, \tau^{1}_{k}&] \cup S^{1}[0,\tau^{1}_{ \frac{m}{3} }] \cup \gamma^{1} ) \neq \emptyset.
\end{align*}
We assume that the first event holds. (Similar arguments work for the second one.) 

\begin{lem}\label{step1}
There exist $\delta > 0$ and $c < \infty $ such that 
\begin{equation}\label{ineqstep1}
P^{w^{1}_{m/3},w^{2}_{m/3}}_{1,2} ( J_{m,N} , \overline{S}^{1}[\tau^{1}_{k-1}, \tau^{1}_{k}] \cap \overline{S}^{2}[0, \tau^{2}_{k}] \neq \emptyset ) \le c 2^{-(N-\frac{m}{3}) \xi } 2^{-\delta k}.
\end{equation}
\end{lem}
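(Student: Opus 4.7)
The plan is to imitate the proof of Lemma~\ref{djlarge} with the extra wrinkle that the event mixes a Brownian condition ($J_{m,N}$) and a random walk condition. I factor by strong Markov at scale $k+1$ to peel off the tail non-intersection from scale $k+1$ to $N$, use the Skorohod coupling (Proposition~\ref{skorohod}) to translate the random walk intersection at scale $k$ into a Brownian close approach, and finally invoke hittability (Proposition~\ref{hittable for bm}, with a Beurling variant in $d=2$) conditionally on $B^{2}$ to extract a factor $2^{-\delta k}$ on top of the Brownian non-intersection probability from scale $m/3$ up to scale $k+1$.

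Concretely, strong Markov at $\tau^{1}_{k+1}\vee\tau^{2}_{k+1}$ together with Proposition~\ref{intersection exp bm} (scaled to starting positions on $\partial\mathcal{B}(2^{k+1})$) yields
\[
P^{w^{1}_{m/3},w^{2}_{m/3}}_{1,2}\bigl(J_{m,N},\,\overline{S}^{1}[\tau^{1}_{k-1},\tau^{1}_{k}]\cap\overline{S}^{2}[0,\tau^{2}_{k}]\neq\emptyset\bigr)\le c\,2^{-\xi(N-k)}\,P^{w^{1}_{m/3},w^{2}_{m/3}}_{1,2}\bigl(J_{m,k+2},\,\overline{S}^{1}[\tau^{1}_{k-1},\tau^{1}_{k}]\cap\overline{S}^{2}[0,\tau^{2}_{k}]\neq\emptyset\bigr),
\]
where $J_{m,k+2}$ denotes $J_{m,N}$ with $N$ replaced by $k+2$. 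For the residual probability on the right, fix a small $\epsilon_{0}>0$ and let $\Xi=\{\sup_{t\le T^{i}_{k+2}}|B^{i}(t)-\overline{S}^{i}(td)|\le 2^{k(1/2+\epsilon_{0})},\ i=1,2\}$; by Proposition~\ref{skorohod}, $P(\Xi^{c})\le a\exp(-2^{k\delta_{0}})$, which is absorbed into the final bound. On $\Xi$ together with the intersection event, the triangle inequality produces $\sigma\le T^{1}_{k+2}$ with $\text{dist}(B^{1}(\sigma),B^{2}[0,T^{2}_{k+2}])\le b\cdot 2^{k(1-\epsilon)}$ for $b=2$ and $\epsilon=1/2-\epsilon_{0}>0$.

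Now condition on $B^{2}$. By Proposition~\ref{hittable for bm} (with a Beurling-based adaptation for $d=2$; see the obstacle below), for any large $M$ there exist $\delta_{H}>0$ and an event $\Omega^{*}$ of $B^{2}$-probability $\ge 1-c\,2^{-Mk}$ on which $Z_{2^{k+1}}\le 2^{-\delta_{H}k}$. The strong Markov property for $B^{1}$ at $\sigma$ together with the definition of $Z_{2^{k+1}}$ gives
\[
P\bigl(J_{m,k+2},\,\sigma\le T^{1}_{k+2}\,\bigm|\,B^{2}\bigr)\le Z_{2^{k+1}}(B^{2})\cdot P\bigl(B^{1}[0,T^{1}_{k+2}]\cap B^{2}[0,T^{2}_{k+2}]=\emptyset\,\bigm|\,B^{2}\bigr).
\]
Taking expectation over $B^{2}$, separating the contributions of $\Omega^{*}$ and $(\Omega^{*})^{c}$, and using that $E[P(B^{1}\cap B^{2}=\emptyset\text{ up to scale }k+2\mid B^{2})]\le c\,2^{-\xi(k-m/3)}$ (Proposition~\ref{intersection exp bm} with rescaling to unit-sphere starting positions), the right-hand side is at most $c\,2^{-\delta_{H}k}\,2^{-\xi(k-m/3)}+c\,2^{-Mk}$. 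Multiplying by $c\,2^{-\xi(N-k)}$ and choosing $M$ large, the announced bound $c\,2^{-\xi(N-m/3)}\,2^{-\delta k}$ follows with $\delta=\delta_{H}/2$.

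The main obstacle is the two-dimensional case, since Proposition~\ref{hittable for bm} is formulated only for $d=3$. The corresponding hittability for $d=2$ must be derived from Proposition~\ref{beurling estimate}(i); the subtlety is that $B^{2}$'s path does not start at the origin, so Beurling cannot be applied in its stated form. I would handle this by re-centering at a point of $B^{2}$ close to $\sigma$ and using that, on $J_{m,k+2}$, the set $B^{2}[0,T^{2}_{k+2}]$ is a connected obstruction of diameter comparable to $2^{k}$ around that point, which suffices to produce the required polynomial decay. A secondary point is the treatment of the $\text{PATH}^{i}_{f}$ conditions inside $J_{m,k+2}$: since the fat paths lie in $\mathcal{B}(2^{m/3}+2^{11m/60})$ and on $F_{m}\cap G_{m}\cap H_{m}$ the starting points $w^{i}_{m/3}\in\partial\mathcal{B}_{m/3}$ are at distance at least $2^{0.99m/3-1}$ from them, the avoidance condition is essentially near the starting boundary and costs only a multiplicative constant in the non-intersection estimate.
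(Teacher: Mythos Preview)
Your outline is essentially the paper's argument, but the first factoring step hides the one subtle point the paper takes care to address. You invoke ``strong Markov at $\tau^{1}_{k+1}\vee\tau^{2}_{k+1}$'' to peel off $c\,2^{-\xi(N-k)}$ from the Brownian event $J_{m,N}$ while retaining the random-walk event $\{\overline{S}^{1}[\tau^{1}_{k-1},\tau^{1}_{k}]\cap\overline{S}^{2}[0,\tau^{2}_{k}]\neq\emptyset\}$. But in the Skorohod coupling the random walk up to $\tau^{i}_{k+1}$ can encode Brownian information beyond $T^{i}_{k+1}$, so the future piece $\{B^{1}[T^{1}_{k+2},T^{1}_{N}]\cap B^{2}[T^{2}_{k+2},T^{2}_{N}]=\emptyset\}$ is not, in general, conditionally independent of your random-walk event (nor of $\Xi$) given $B^{i}(T^{i}_{k+2})$. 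This is precisely why Proposition~\ref{measurability} exists: the paper introduces the high-probability event $\Psi=\Psi^{1}(2^{k+1})\cap\Psi^{2}(2^{k+1})$ on which the required conditional independence holds, and disposes of $\Psi^{c}$ and the Skorohod-bad event $Q$ separately, each at cost $c\,2^{-(N-k-2)\xi}\exp(-2^{\delta k})$. Without that device your displayed factoring is not justified.

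Two smaller points. Your hittability inequality
\[
P\bigl(J_{m,k+2},\,\sigma\le T^{1}_{k+2}\,\bigm|\,B^{2}\bigr)\le Z_{2^{k+1}}(B^{2})\cdot P\bigl(B^{1}[0,T^{1}_{k+2}]\cap B^{2}[0,T^{2}_{k+2}]=\emptyset\,\bigm|\,B^{2}\bigr)
\]
is not what strong Markov at $\sigma$ actually gives: the companion factor should be the non-intersection only up to the earlier radius (roughly $2^{k-1}$), not $2^{k+2}$. The final order is unchanged, but the intermediate inequality as written does not follow. Finally, your $d=2$ obstacle is correctly spotted---Proposition~\ref{hittable for bm} is stated only for $d=3$ and the paper's proof glosses over this; the re-centering you describe together with Proposition~\ref{beurling estimate}(i) is the right substitute.
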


\begin{proof}
Let 
\begin{equation*}
Q^{i} = \{ \sup_{ 0 \le s \le T^{i}_{k+1} } |B^{i}(s) - \overline{S}^{i} (ds) | \ge 2^{\frac{31k}{60}} \},
\end{equation*}
and $Q= Q^{1} \cup Q^{2}$. Let $\Psi ^{1}(2^{k+1}), \Psi ^{2}(2^{k+1})$ be the events given in Proposition \ref{measurability} for $(B^{1}, \overline{S}^{1})$ and $(B^{2}, \overline{S}^{2})$, respectively, and let $\Psi = \Psi ^{1}(2^{k+1}) \cap \Psi ^{2}(2^{k+1})$. Then there exist $c < \infty$ and $\delta > 0$ such that 
\begin{equation}\label{Psihigh}
P ( J_{m,N} , \Psi ^{c} ) \le c 2^{-(N-k-2)\xi} \exp (- 2^{\delta k}).
\end{equation}
By Proposition \ref{measurability}, $Q$ and $\{B^{1}(t) : t \ge T^{1}_{k+2} \} \cup \{B^{2}(t) : t \ge T^{2}_{k+2} \}$ are conditionally independent given $B^{1}(T^{1}_{k+2}), B^{1}(T^{1}_{k+2})$ on the event $\Psi$. Hence by Proposition \ref{skorohod},
\begin{align}\label{q}
P( J_{m,N} , \Psi , Q ) &\le P ( B^{1}[T^{1}_{k+2}, T^{1}_{N}] \cap B^{2}[T^{2}_{k+2}, T^{2}_{N}] = \emptyset , \Psi , Q) \notag \\
& \le c 2^{-(N-k-2)\xi } \exp ( -2^{\delta k}).
\end{align}
Now we give an upper bound of
\begin{equation}\label{ess}
P(  J_{m,N}, \Psi , Q^{c} , \overline{S}^{1}[\tau^{1}_{k-1}, \tau^{1}_{k}] \cap \overline{S}^{2}[0, \tau^{2}_{k}] \neq \emptyset ).
\end{equation}
By the strong Markov property, this probability is bounded above by
\begin{equation*}
c 2^{-(N-k-2)\xi}P( J_{m , k+1} , \Psi , Q^{c} , \overline{S}^{1}[\tau^{1}_{k-1}, \tau^{1}_{k}] \cap \overline{S}^{2}[0, \tau^{2}_{k}] \neq \emptyset ).
\end{equation*}
Assume $Q^{c}$ holds. Then it is easy to see that 
\begin{equation*}
d T^{i}( 2^{k-1}-2^{\frac{31k}{60}} ) \le \tau^{i}_{k-1} < \tau^{i}_{k} \le d T^{i}( 2^{k}+2^{\frac{31k}{60}} ).
\end{equation*}
Hence on the event $Q^{c} \cap \{ \overline{S}^{1}[\tau^{1}_{k-1}, \tau^{1}_{k}] \cap \overline{S}^{2}[0, \tau^{2}_{k}] \neq \emptyset \}$, we see that there exist $s,t$ with
\begin{align*}
d T^{1}( 2^{k-1}-2^{\frac{31k}{60}} ) \le s \le d T^{1}( 2^{k}+2^{\frac{31k}{60}} ), \\
 0\le t \le d T^{2}( 2^{k}+2^{\frac{31k}{60}} )
\end{align*}
such that $\overline{S}^{1}(s)=\overline{S}^{2}(t)$. For such $s$ and $t$, we have
\begin{equation*}
|B^{1}(\frac{s}{d}) -B^{2}(\frac{t}{d})| \le 2^{\frac{31k}{60}+1}.
\end{equation*}
Namely, the following event holds,
\begin{equation}\label{bmclose}
D_{k}:=\{ \text{dist} \big( B^{1}[ T^{1}( 2^{k-1}-2^{\frac{31k}{60}} ) , T^{1}( 2^{k}+2^{\frac{31k}{60}} )] , B^{2}[0, T^{2}( 2^{k}+2^{\frac{31k}{60}} )] \big) \le 2^{\frac{31k}{60}+1} \}.
\end{equation}
Let 
\begin{equation*}
Z_{k} = \sup P^{z}(B[0, T_{k+1}] \cap B^{2}[0,T_{k+1}] = \emptyset ),
\end{equation*}
where the supremum is over all $z$ with $ z \in {\cal B} (2^{k} + 2^{\frac{31k}{60}})$ and 
\begin{equation*}
\text{dist} ( z, B^{2}[0, T^{2}(2^{k} + 2^{\frac{31k}{60}}) ) \le 2^{\frac{31k}{60}+1}.
\end{equation*}
We let $H_{k}$ be the event $\{ Z_{k} \le 2^{-\delta k} \}$. By Proposition \ref{hittable for bm}, there exists $\delta > 0$ such that
\begin{equation*}
P(H_{k}) \le 2^{-6k}.
\end{equation*}
Therefore, we have only to estimate 
\begin{equation*}
P( J_{m , k+1} , \Psi , D_{k} , H_{k}^{c} ).
\end{equation*}
On the event $J_{m , k+1} \cap D_{k} \cap H_{k}^{c}$, $B^{1}[ T^{1}( 2^{k-1}-2^{\frac{31k}{60}} ) , T^{1}_{k+1}]$ does not intersect $B^{2}[0, T^{2}_{k+1}]$ nevertheless $B^{1}$ gets close to $B^{2}[0, T^{2}_{k+1}]$ which is a hittable set. By the strong Markov property,
\begin{equation*}
P( J_{m , k+1} , \Psi , D_{k} , H_{k}^{c} ) \le c 2^{-\delta k} 2^{-(k-\frac{m}{3}) \xi },
\end{equation*}
and this finishes the proof.

\end{proof}

\begin{lem}\label{step2}
There exist $\delta > 0$ and $c < \infty $ such that 
\begin{equation}\label{ineqstep2}
P^{w^{1}_{m/3},w^{2}_{m/3}}_{1,2} ( J_{m,N} , \overline{S}^{1}[\tau^{1}_{k-1}, \tau^{1}_{k}] \cap (S^{2}[0,\tau^{2}_{ \frac{m}{3} }] \cup \gamma^{2} ) \neq \emptyset ) \le c 2^{-(N-\frac{m}{3}) \xi } \exp (- 2^{\delta k}).
\end{equation}
\end{lem}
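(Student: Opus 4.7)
The approach mirrors that of Lemma \ref{step1}, except that the endgame is a direct contradiction rather than another hittability argument. I would introduce the bad-coupling event
\[
Q^i = \Big\{ \sup_{0 \le s \le T^i_{k+1}} |B^i(s) - \overline{S}^i(ds)| \ge 2^{31k/60} \Big\}, \qquad Q = Q^1 \cup Q^2,
\]
together with the measurability event $\Psi = \Psi^1(2^{k+1}) \cap \Psi^2(2^{k+1})$ from Proposition \ref{measurability}. Both $P(Q)$ and $P(\Psi^c)$ are bounded by $a \exp(-2^{\delta k})$ thanks to Propositions \ref{skorohod} and \ref{measurability}.

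The key observation is that on $Q^c$ the supposed intersection is incompatible with $J_{m,N}$. Since $k \le 21m/60$, one has $31k/60 \le (31 \cdot 21/3600)\, m < 11m/60$; thus if $\overline{S}^1(s) \in S^2[0,\tau^2_{m/3}] \cup \gamma^2$ for some $s \in [\tau^1_{k-1}, \tau^1_k]$, then on $Q^c$ we have $|B^1(s/d) - \overline{S}^1(s)| < 2^{11m/60}$, so $B^1(s/d) \in \text{PATH}^2_f$. Moreover $Q^c$ forces $s/d \le T^1(2^k + 2^{31k/60}) \le T^1_{k+1} \le T^1_N$, so $B^1[0, T^1_N]$ hits $\text{PATH}^2_f$, contradicting $J_{m,N}$. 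Consequently
\[
P\big( J_{m,N},\, Q^c,\, \overline{S}^1[\tau^1_{k-1}, \tau^1_k] \cap (S^2[0,\tau^2_{m/3}] \cup \gamma^2) \ne \emptyset \big) = 0.
\]

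It then remains to estimate $P(J_{m,N}, Q)$, which is handled exactly as in Lemma \ref{step1}. On $\Psi$, Proposition \ref{measurability} renders $Q$ conditionally independent of $\{B^i(t): t \ge T^i_{k+2}\}$ given $(B^1(T^1_{k+2}), B^2(T^2_{k+2}))$, so combining with the Brownian intersection exponent (Proposition \ref{intersection exp bm}) one obtains $P(J_{m,N}, \Psi, Q) \le c\, 2^{-(N-k-2)\xi} \exp(-2^{\delta k})$, and a parallel argument treats the $\Psi^c$ term. Since $k \le 21m/60$, the polynomial factor $2^{(k+2-m/3)\xi}$ is dominated by $\exp(-2^{\delta k})$ after shrinking $\delta$, leaving the claimed bound $c\, 2^{-(N-m/3)\xi} \exp(-2^{\delta k})$.

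I expect the main delicate point to be the numerical calibration: the Skorokhod error $2^{31k/60}$ must be strictly smaller than the fattening radius $2^{11m/60}$ throughout the regime $k \le 21m/60$, and this is precisely what the choice of the exponent $11/60$ in the definition of $\text{PATH}^i_f$ is engineered to guarantee. The remaining technical burden—carrying the $\Psi^c$ term through the conditional-independence argument—is identical to the corresponding step of Lemma \ref{step1} and introduces no new difficulty.
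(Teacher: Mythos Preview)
Your proposal is correct and follows essentially the same approach as the paper: introduce the coupling events $Q$ and $\Psi$, use the bounds \eqref{Psihigh} and \eqref{q} from Lemma~\ref{step1} to dispose of the bad-coupling contribution, and then observe that on $Q^{c}$ the numerical inequality $31k/60 < 11m/60$ forces $B^{1}$ to enter $\text{PATH}^{2}_{f}$ before $T^{1}_{N}$, contradicting $J_{m,N}$. The only cosmetic difference is that the paper inserts an (unnecessary) strong Markov step before reaching the contradiction, whereas you go there directly and also spell out the absorption of the polynomial factor $2^{(k-m/3)\xi}$ into $\exp(-2^{\delta k})$; neither changes the substance.
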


\begin{proof}
Recall $\Psi$ and $Q$ are the events given in the proof of Lemma \ref{step1}. By \eqref{Psihigh} and \eqref{q}, it suffices to estimate 
\begin{equation*}
P^{w^{1}_{m/3},w^{2}_{m/3}}_{1,2} ( J_{m,N} , \overline{S}^{1}[\tau^{1}_{k-1}, \tau^{1}_{k}] \cap (S^{2}[0,\tau^{2}_{ \frac{m}{3} }] \cup \gamma^{2} ) \neq \emptyset , \Psi , Q^{c} ) .
\end{equation*}
By the strong Markov property, this probability is bounded above by
\begin{equation*}
c 2^{-(N-k) \xi } P^{w^{1}_{m/3},w^{2}_{m/3}}_{1,2} \big( J_{m , k+1} , \Psi , Q^{c} , \overline{S}^{1}[\tau^{1}_{k-1}, \tau^{1}_{k}] \cap (S^{2}[0,\tau^{2}_{ \frac{m}{3} }] \cup \gamma^{2} ) \neq \emptyset \big).
\end{equation*}
On the event $Q^{c} \cap \{ \overline{S}^{1}[\tau^{1}_{k-1}, \tau^{1}_{k}] \cap (S^{2}[0,\tau^{2}_{ \frac{m}{3} }] \cup \gamma^{2} ) \neq \emptyset \}$, it is easy to see that there exists $t$ with 
\begin{equation*}
T^{1}( 2^{k-1}-2^{\frac{31k}{60}} ) \le t \le T^{1}( 2^{k}+2^{\frac{31k}{60}})
\end{equation*}
such that
\begin{equation*}
\text{dist}\big( B^{1}(t) , (S^{2}[0,\tau^{2}_{ \frac{m}{3} }] \cup \gamma^{2} ) \big) \le 2^{\frac{31k}{60}}.
\end{equation*}
Since $k \le \frac{21m}{60}$, we have $ \frac{31k}{60} \le \frac{651m}{3600} < \frac{11m}{60}$. Therefore, 
\begin{equation*}
B^{1}[0,T^{1}_{k+1}] \cap \text{PATH} ^{2}_{f} \neq \emptyset ,
\end{equation*}
and the lemma is finished.

\end{proof}

\subsubsection{ Bounds for $  \frac{21m}{60} < k \le N-3 $}\label{sub2}
From now we assume that $  \frac{21m}{60} < k \le N-3 $. The similar argument in the proof of Lemma \ref{step1} gives the following lemma, so we omit the proof.
\begin{lem}\label{step3}
There exist $\delta > 0$ and $c < \infty $ such that 
\begin{equation}\label{ineqstep3}
P^{w^{1}_{m/3},w^{2}_{m/3}}_{1,2} ( J_{m,N} , \overline{S}^{1}[\tau^{1}_{k-1}, \tau^{1}_{k}] \cap \overline{S}^{2}[0, \tau^{2}_{k}] \neq \emptyset ) \le c 2^{-(N-\frac{m}{3}) \xi } 2^{-\delta k}.
\end{equation}
\end{lem}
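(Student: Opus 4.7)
The plan is to run the argument from the proof of Lemma \ref{step1} essentially verbatim. The statement of Lemma \ref{step3} coincides with that of Lemma \ref{step1}; the only difference is that the range of $k$ is $\frac{21m}{60} < k \le N-3$ instead of $\frac{m}{3} < k \le \frac{21m}{60}$. Crucially, the proof of Lemma \ref{step1} never actually uses its upper restriction $k \le \frac{21m}{60}$: that bound was required only in Lemma \ref{step2}, where the Skorohod coupling error $2^{31k/60}$ had to be dominated by the thickness $2^{11m/60}$ of $\text{PATH}^{2}_{f}$. Since the event considered in Lemma \ref{step3} refers only to an intersection between the two random walks themselves and makes no reference to $\text{PATH}^{i}_{f}$, that restriction can simply be dropped.

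Concretely, I would define, as before, the coupling-error events $Q^{i} = \{ \sup_{0 \le s \le T^{i}_{k+1}} |B^{i}(s) - \overline{S}^{i}(ds)| \ge 2^{31k/60} \}$, $Q = Q^{1} \cup Q^{2}$, together with the measurability event $\Psi = \Psi^{1}(2^{k+1}) \cap \Psi^{2}(2^{k+1})$ from Proposition \ref{measurability}. Proposition \ref{skorohod} gives $P(Q) \le a \exp(-2^{\delta k})$ for some $\delta > 0$, and $P(\Psi^{c}) \le a \exp(-2^{\delta k})$. Splitting the probability in question into contributions from $\Psi^{c}$, $\Psi \cap Q$, and $\Psi \cap Q^{c}$, and handling the first two by restricting $J_{m,N}$ to the non-intersection of the Brownian motions after times $T^{i}_{k+2}$, the conditional independence from Proposition \ref{measurability} together with the Brownian intersection exponent (Proposition \ref{intersection exp bm}) yields in both cases an upper bound of $c\, 2^{-(N-k-2)\xi} \exp(-2^{\delta k})$, which is easily absorbed into the target $c\, 2^{-(N-m/3)\xi} 2^{-\delta k}$ since $\exp(-2^{\delta k})$ dominates any polynomial in $2^{k}$.

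The main case is $\Psi \cap Q^{c} \cap \{\overline{S}^{1}[\tau^{1}_{k-1}, \tau^{1}_{k}] \cap \overline{S}^{2}[0, \tau^{2}_{k}] \neq \emptyset\}$. Here the coupling bound on $Q^{c}$ yields $dT^{i}(2^{k-1} - 2^{31k/60}) \le \tau^{i}_{k-1} < \tau^{i}_{k} \le dT^{i}(2^{k} + 2^{31k/60})$, so any random-walk intersection in this time window forces $B^{1}$ and $B^{2}$ to come within distance $2^{31k/60 + 1}$; this is exactly the event $D_{k}$ from the proof of Lemma \ref{step1}. Combined with the hittable-path event supplied by Proposition \ref{hittable for bm} (the complement of which has probability at most $2^{-6k}$), the extra factor $2^{-\delta k}$ on the Brownian non-intersection probability up to the boundary of $\mathcal{B}(2^{k+1})$ is obtained exactly as before, and the strong Markov property at $T^{i}_{k+1}$ contributes the remaining $2^{-(N-k-1)\xi}$ factor, giving the claimed bound.

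Since every ingredient used (the Skorohod embedding, Proposition \ref{measurability}, Proposition \ref{hittable for bm}, and the Brownian intersection exponent) is scale-free and applies uniformly in $k$, no new obstacle arises relative to Lemma \ref{step1}. This is precisely why the authors omit the proof; my only task is to record that the upper restriction on $k$ played no role in the preceding argument.
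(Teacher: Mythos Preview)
Your proposal is correct and matches the paper's intent exactly: the authors omit the proof precisely because the argument of Lemma~\ref{step1} carries over verbatim, and you have correctly identified that the restriction $k \le \tfrac{21m}{60}$ is used only in Lemma~\ref{step2} (to compare $2^{31k/60}$ with the width of $\text{PATH}^{i}_{f}$) and plays no role in the proof of Lemma~\ref{step1}. The only cosmetic discrepancy is that the paper applies the strong Markov property at level $k+2$ rather than $k+1$, but this is immaterial.
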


Let $\Psi$ and $Q$ be the events defined in the proof of Lemma \ref{step1}. By \eqref{Psihigh} and \eqref{q}, in order to prove
\begin{equation*}
P^{w^{1}_{m/3},w^{2}_{m/3}}_{1,2} ( J_{m,N}, \beta =k) \le  c 2^{-(N-\frac{m}{3}) \xi } 2^{-\delta k},
\end{equation*}
for $  \frac{21m}{60} < k \le N-3 $, it is enough to show the following lemma.

\begin{lem}\label{4step}
There exist $\delta > 0$ and $c < \infty$ such that
\begin{equation}\label{ineqstep4}
P^{w^{1}_{m/3},w^{2}_{m/3}}_{1,2} ( J_{m,N} , \overline{S}^{1}[\tau^{1}_{k-1}, \tau^{1}_{k}] \cap (S^{2}[0,\tau^{2}_{ \frac{m}{3} }] \cup \gamma^{2} ) \neq \emptyset , \Psi , Q^{c} )  \le c 2^{-(N-\frac{m}{3}) \xi } 2^{-\delta k}.
\end{equation}
\end{lem}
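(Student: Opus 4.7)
The proof will follow the template of Lemma \ref{step1}, modified to accommodate the fact that $S^{2}[0,\tau^{2}_{m/3}] \cup \gamma^{2}$ is a \emph{random walk} path rather than a Brownian path, so Proposition \ref{hittable for bm} cannot be applied to it directly. First, apply the strong Markov property at time $T^{1}_{k+2}$ for the Brownian motion pair to pull out the factor $c\,2^{-(N-k-2)\xi}$ that accounts for the non-intersection of $B^{1},B^{2}$ between scales $2^{k+2}$ and $2^{N}$. This reduces the task to bounding
\begin{equation*}
P^{w^{1}_{m/3},w^{2}_{m/3}}_{1,2}\bigl( J_{m,k+1},\ \overline{S}^{1}[\tau^{1}_{k-1},\tau^{1}_{k}] \cap (S^{2}[0,\tau^{2}_{m/3}] \cup \gamma^{2}) \neq \emptyset,\ \Psi,\ Q^{c} \bigr)
\end{equation*}
by $c\,2^{-\delta k}\,2^{-(k-m/3)\xi}$. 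On $\Psi \cap Q^{c}$ together with the hit event, the Skorohod bound forces $B^{1}$ to come within distance $2^{\frac{31k}{60}+1}$ of $S^{2}[0,\tau^{2}_{m/3}] \cup \gamma^{2}$ during the time interval $[T^{1}(2^{k-1}-2^{31k/60}),\,T^{1}(2^{k}+2^{31k/60})]$; call this close-approach event $\widetilde{D}_{k}$. Since $k > \frac{21m}{60}$, one has $2^{31k/60} > 2^{11m/60}$, and the argument of Lemma \ref{step2} --- which simply observed that being within $2^{31k/60}$ of the path forces entry into $\text{PATH}^{2}_{f}$ --- no longer closes.

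To compensate, I would introduce an auxiliary Skorohod coupling, independent of $(B^{1},\overline{S}^{1})$ and $(B^{2},\overline{S}^{2})$: let $\widehat{B}^{2}$ be a Brownian motion coupled to $S^{2}$ via Proposition \ref{skorohod}, so that with probability at least $1-a\exp(-2^{\delta m})$ one has $\sup_{0 \le t \le T^{2}(2^{m/3})}|\widehat{B}^{2}(t)-S^{2}(td)| \le 2^{m/6+\epsilon'}$. Taking $\epsilon' < \tfrac{1}{60}$ small enough that $2^{m/6+\epsilon'} \le \tfrac{1}{10}\cdot 2^{11m/60}$, the fat path $\text{PATH}^{2}_{f}$ contains (up to the negligible contribution from $\gamma^{2}$, which is a fixed finite set at scale $2^{L}$) a $\bigl(2^{11m/60}-2^{m/6+\epsilon'}\bigr)$-neighborhood of $\widehat{B}^{2}[0,\widehat{T}^{2}(2^{m/3})]$. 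In particular, any Brownian path that hits $\widehat{B}^{2}[0,\widehat{T}^{2}(2^{m/3})]$ enters $\text{PATH}^{2}_{f}$, converting Brownian hittability of $\widehat{B}^{2}$ into hittability of the fat tube.

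Now apply Proposition \ref{hittable for bm} to $\widehat{B}^{2}$ with $n=2^{k}$, $\epsilon=\tfrac{29}{60}$, and $b$ chosen so that $b\,n^{1-\epsilon}=2\cdot 2^{31k/60}$: this produces a hittability event $\widehat{H}_{k}$, measurable with respect to $\widehat{B}^{2}$, satisfying $P(\widehat{H}_{k}^{c}) \le c\,2^{-6k}$, on which every starting point within $2\cdot 2^{31k/60}$ of $\widehat{B}^{2}[0,\widehat{T}^{2}(2^{m/3})]$ launches a Brownian motion that avoids $\widehat{B}^{2}[0,\widehat{T}^{2}(2^{m/3})]$ up to its exit from ${\cal B}(2^{k+2})$ with conditional probability at most $2^{-\delta k}$. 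Applying the strong Markov property for $B^{1}$ at the first time of close approach, and using $P(A_{m/3}(\overline{\gamma})) \le c\,2^{-(m/3-L)\xi}$ from Proposition \ref{intersection exp rw}, one obtains
\begin{equation*}
P^{w^{1}_{m/3},w^{2}_{m/3}}_{1,2}\bigl( J_{m,k+1},\ \widetilde{D}_{k},\ \widehat{H}_{k},\ \Psi,\ Q^{c} \bigr) \le c\,2^{-\delta k}\,2^{-(k-m/3)\xi},
\end{equation*}
while the exceptional events $\Psi^{c},Q$, and $\widehat{H}_{k}^{c}$ are absorbed by \eqref{Psihigh}, \eqref{q}, and the $c\,2^{-6k}$ bound respectively. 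The parallel case where $\overline{S}^{1}[\tau^{1}_{k-1},\tau^{1}_{k}]$ meets $\gamma^{2}$ alone and the symmetric case with roles of the indices $1,2$ exchanged are handled identically.

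The main obstacle is the step that requires Brownian hittability of $S^{2}[0,\tau^{2}_{m/3}] \cup \gamma^{2}$: Proposition \ref{hittable for bm} applies only to genuine Brownian paths, and Proposition \ref{hittable for srw} (and the random walk event $H_{m}$) speaks only about simple random walks hitting $S^{2}$, not about $B^{1}$. The workaround through the auxiliary coupling $\widehat{B}^{2} \leftrightarrow S^{2}$ depends crucially on the fact that the Skorohod error at the relevant scale, of order $2^{m/6+\epsilon'}$, can be made strictly smaller than the fat-tube thickness $2^{11m/60}$; any weaker control on the coupling would break the implication ``hit $\widehat{B}^{2}$ $\Rightarrow$ enter $\text{PATH}^{2}_{f}$'' and force one to replace the $2^{-\delta k}$ bound with a weaker one that would not telescope over the range $\tfrac{21m}{60} < k \le N-3$.
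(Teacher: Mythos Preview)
Your auxiliary-coupling argument has a real gap. Proposition~\ref{hittable for bm} is stated for a target path $B'[0,T'(2n)]$ that runs all the way to radius $2n$; you invoke it with $n=2^{k}$, but your target $\widehat{B}^{2}[0,\widehat{T}^{2}(2^{m/3})]$ only reaches radius $2^{m/3}\ll 2^{k}$. The conclusion the proposition would give is that $B^{1}$ hits $\widehat{B}^{2}[0,\widehat{T}^{2}(2^{k+1})]$ with high conditional probability, not that it hits the short initial segment up to radius $2^{m/3}$; and beyond $\widehat{T}^{2}(2^{m/3})$ your $\widehat{B}^{2}$ is decoupled from $S^{2}$, so hitting it there does not force entry into $\text{PATH}^{2}_{f}$. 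Worse, once $k>\tfrac{20m}{31}$ the close-approach scale $2^{31k/60}$ exceeds $2^{m/3}$, i.e.\ exceeds the diameter of the entire target set, so no hittability statement at scale $2^{k}$ can yield a $2^{-\delta k}$ bound: a Brownian motion starting at distance $2^{31k/60}$ from a set of diameter $2^{m/3}$ avoids it with probability close to $1$ in $d=3$.

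The paper's argument avoids hittability of $S^{2}[0,\tau^{2}_{m/3}]\cup\gamma^{2}$ altogether and uses only its \emph{location}: this set lies entirely inside ${\cal B}(2^{m/3})$, so on $Q^{c}$ the hit event forces $B^{1}[T^{1}(2^{k-1}-2^{31k/60}),T^{1}(2^{k}+2^{31k/60})]$ to enter ${\cal B}(2^{m/3+1})$ (or ${\cal B}(2^{31k/60+1})$ when $k>\tfrac{20m}{31}$). In $d=3$ this return has probability at most $c\,2^{-(k-m/3)}$ (resp.\ $c\,2^{-k/3}$) by transience, which combined with $P(J_{m,k-2})\le c\,2^{-(k-m/3)\xi}$ gives the bound. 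In $d=2$ one sets $\eta=\inf\{t\ge T^{1}(2^{k-1}-2^{31k/60}):B^{1}(t)\in{\cal B}(2^{m/3+1})\}$ and observes that on $J_{m,k+1}$ the path $B^{1}[\eta,T^{1}_{k+1}]$ must avoid $B^{2}[0,T^{2}_{k+1}]$; since $B^{2}[0,T^{2}_{k+1}]$ does extend to radius $2^{k+1}$, the Beurling estimate applies at the correct scale and yields a factor $c\,2^{-\delta k}$. The point you missed is that the contradiction with $J_{m,k+1}$ should come from $B^{1}$ intersecting $B^{2}[0,T^{2}_{k+1}]$, not from $B^{1}$ entering $\text{PATH}^{2}_{f}$.
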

\begin{proof}
By the strong Markov property, the right hand side of \eqref{ineqstep4} is bounded above by 
\begin{equation*}
c 2^{-(N-k) \xi } P^{w^{1}_{m/3},w^{2}_{m/3}}_{1,2} \big( J_{m , k+1} , \Psi , Q^{c} , \overline{S}^{1}[\tau^{1}_{k-1}, \tau^{1}_{k}] \cap (S^{2}[0,\tau^{2}_{ \frac{m}{3} }] \cup \gamma^{2} ) \neq \emptyset \big).
\end{equation*}
Assume $d=3$ and $ \frac{21m}{60} < k \le \frac{20m}{31}$ so that $ \frac{31k}{60} \le \frac{m}{3}$. If $\overline{S}^{1}[\tau^{1}_{k-1}, \tau^{1}_{k}] \cap (S^{2}[0,\tau^{2}_{ \frac{m}{3} }] \cup \gamma^{2} ) \neq \emptyset$, then $\overline{S}^{1}[\tau^{1}_{k-1}, \tau^{1}_{k}] \cap {\cal B} ( 2^{\frac{m}{3}} ) \neq \emptyset$. On the other hand, on the event $Q^{c}$, we have
\begin{equation*}
3 T^{1}( 2^{k-1}-2^{\frac{31k}{60}} ) \le \tau^{1}_{k-1} \le \tau^{1}_{k} \le 3T^{1}( 2^{k}+2^{\frac{31k}{60}}).
\end{equation*}
Since $\frac{31k}{60} \le \frac{m}{3}$, we have
\begin{equation*}
B^{1}[T^{1}( 2^{k-1}-2^{\frac{31k}{60}} ) , T^{1}( 2^{k}+2^{\frac{31k}{60}} )] \cap {\cal B} ( 2^{\frac{m}{3}+1} )\neq \emptyset.
\end{equation*}
For $k > \frac{21m}{60}$, a standard estimate shows that
\begin{equation*}
P_{1}(B^{1}[T^{1}( 2^{k-1}-2^{\frac{31k}{60}} ) , T^{1}( 2^{k}+2^{\frac{31k}{60}} )] \cap {\cal B} ( 2^{\frac{m}{3}+1} )\neq \emptyset) \le c 2^{-(k-\frac{m}{3})}.
\end{equation*}
Using the strong Markov property at $T^{1}( 2^{k-1}-2^{\frac{31k}{60}} )$ first, and then estimating $P( J_{m , k-2})$, we have
\begin{equation*}
P^{w^{1}_{m/3},w^{2}_{m/3}}_{1,2} \big( J_{m , k+1} , \Psi , Q^{c} , \overline{S}^{1}[\tau^{1}_{k-1}, \tau^{1}_{k}] \cap (S^{2}[0,\tau^{2}_{ \frac{m}{3} }] \cup \gamma^{2} ) \neq \emptyset \big) \le c  2^{-(k-\frac{m}{3})} 2^{-(k-\frac{m}{3})\xi }.
\end{equation*}
Therefore, the proof for $d=3$ and $ \frac{21m}{60} < k \le \frac{20m}{31}$ is finished.

Next we assume $d=3$ and $ \frac{20m}{31} < k \le N-3$. In this case, if $\overline{S}^{1}[\tau^{1}_{k-1}, \tau^{1}_{k}] \cap {\cal B} ( 2^{\frac{m}{3}} ) \neq \emptyset$ and $Q^{c}$ hold, then 
\begin{equation}\label{return}
B^{1}[T^{1}( 2^{k-1}-2^{\frac{31k}{60}} ) , T^{1}( 2^{k}+2^{\frac{31k}{60}} )] \cap {\cal B} ( 2^{\frac{31k}{60}+1} )\neq \emptyset.
\end{equation}
Since this event occur with probability at most $c 2^{-\frac{k}{3}}$, the lemma is proved for $d=3$.

Assume $d=2$. In this case, the probability of the event \eqref{return} is bounded below by $1/k$, so we need to change the proof. Assume $ \frac{21m}{60} < k \le \frac{20m}{31}$. (For the otherwise, the proof is almost same in this case. So we only consider this case.) Let
\begin{equation*}
\eta = \inf \{ t \ge T^{1}( 2^{k-1}-2^{\frac{31k}{60}} ) : B^{1}(t) \in {\cal B} ( 2^{\frac{m}{3}+1} ) \}.
\end{equation*}
 We already showed that if $\overline{S}^{1}[\tau^{1}_{k-1}, \tau^{1}_{k}] \cap (S^{2}[0,\tau^{2}_{ \frac{m}{3} }] \cup \gamma^{2} ) \neq \emptyset$ and $Q^{c}$ hold, then $ \eta \le T^{1}( 2^{k}+2^{\frac{31k}{60}} )$. By the Proposition \ref{beurling estimate}, we see that
\begin{align*}
&P^{w^{1}_{m/3},w^{2}_{m/3}}_{1,2} \big(  J_{m , k+1} , \eta \le T^{1}( 2^{k}+2^{\frac{31k}{60}} ) \big) \notag \\
&\le E^{w^{2}_{m/3}}_{2} \big( E^{w^{1}_{m/3}}_{1} \big( {\bf 1} \{ J_{m, k-2} , \eta \le T^{1}( 2^{k}+2^{\frac{31k}{60}})  , B^{1}[\eta , T^{1}_{k+1}] \cap B^{2}[0, T^{2}_{k+1}]= \emptyset \} \big) \big) \notag \\
&\le P^{w^{1}_{m/3},w^{2}_{m/3}}_{1,2} (  J_{m , k-2} ) c 2^{-\frac{k}{5}} \notag \\
&\le c 2^{-(k-\frac{m}{3})\xi } 2^{ -\frac{k}{5}},
\end{align*}
and the lemma is proved for all cases.

\end{proof}

\subsubsection{ Bounds for $ N-2 \le k \le N $}\label{sub3}
Finally, we give estimates for $ N-2 \le k \le N $. Since a proof is similar for each case, we only consider for $k=N$. By definition of $\beta$ in \eqref{firstintrw}, we see that $\beta = N$ implies the event
\begin{equation}\label{k=N}
\bigcup _{i=1,2} \{ \overline{S}^{i}[\tau ^{i}_{N-1}, \tau^{i}_{N}] \cap ( \overline{S}^{3-i}[0, \tau ^{3-i}_{N}]  \cup S^{3-i}[0,\tau^{3-i}_{ \frac{m}{3} }] \cup \gamma^{3-i} ) \neq \emptyset \}.
\end{equation}
We will only give bounds on the probability of the event for $i=1$ in \eqref{k=N}. First we show the following lemma.

\begin{lem}\label{5step}
There exist $ \delta > 0$ and $ c < \infty $ such that
\begin{equation}\label{ineqstep5}
P^{w^{1}_{m/3},w^{2}_{m/3}}_{1,2} ( J_{m,N} , \overline{S}^{1}[\tau^{1}_{N-1}, \tau^{1}_{N}] \cap (S^{2}[0,\tau^{2}_{ \frac{m}{3} }] \cup \gamma^{2} ) \neq \emptyset ) \le c 2^{-(N-\frac{m}{3}) \xi } 2^{-\delta N}.
\end{equation}
\end{lem}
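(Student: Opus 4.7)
My plan is to adapt the argument of Lemma \ref{4step} to the boundary case $k=N$. The essential simplification is that we no longer need the factor $c\, 2^{-(N-k)\xi}$ obtained there from the strong Markov property at $\tau^1_{k+1}$: at $k=N$ this factor equals $1$, so we can work directly with the event $J_{m,N}$ without further extending the walks.

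First I would introduce the Skorohod coupling events analogously to Lemma \ref{step1}: set $Q^i = \{\sup_{0 \le s \le T^i_N} |B^i(s) - \overline{S}^i(ds)| \ge 2^{31N/60}\}$, $Q = Q^1 \cup Q^2$, and let $\Psi$ be the intersection of the measurability events given by Proposition \ref{measurability} at level $2^N$ for the two coupled pairs. To control $P(J_{m,N},\Psi^c)$ and $P(J_{m,N},\Psi,Q)$, I apply the strong Markov property at $\tau^i_{N-3}$ and use Proposition \ref{intersection exp rw} to extract the factor $c\, 2^{-(N-m/3)\xi}$, combined with the exponential bounds on $P(\Psi^c)$ from Proposition \ref{measurability} and on $Q$ from Proposition \ref{skorohod}. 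This bounds both terms by $c\, 2^{-(N-m/3)\xi} \exp(-2^{\delta N})$, which is absorbed into the target estimate.

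The main term to bound is then
\[
P^{w^{1}_{m/3},w^{2}_{m/3}}_{1,2}\bigl(J_{m,N},\Psi,Q^c,\overline{S}^{1}[\tau^{1}_{N-1},\tau^{1}_{N}] \cap (S^{2}[0,\tau^{2}_{m/3}] \cup \gamma^{2}) \neq \emptyset\bigr).
\]
On $Q^c$ the intersection hypothesis forces $B^1$ to come close to the initial path, which lies in ${\cal B}(2^{m/3})$. Concretely, there exists $t \in [T^{1}(2^{N-1}-2^{31N/60}), T^{1}(2^{N}+2^{31N/60})]$ with $\text{dist}(B^{1}(t), S^{2}[0,\tau^{2}_{m/3}] \cup \gamma^{2}) \le 2^{31N/60}$, so $B^1$ must revisit a ball much smaller than $2^N$ after first reaching level $2^{N-1}-2^{31N/60}$.

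I would then split into cases mirroring Lemma \ref{4step}. For $d=3$ with $21m/60 < N \le 20m/31$, the standard gambler's-ruin estimate yields return probability at most $c\, 2^{-(N-m/3)}$; for $d=3$ with $N > 20m/31$, the return to ${\cal B}(2^{31N/60+1})$ has probability at most $c\, 2^{-N/3}$. For $d=2$ I would apply the Beurling estimate (Proposition \ref{beurling estimate}(i)) with $B^{2}[0,T^{2}_{N}]$ as the obstacle: conditional on the pre-return portion of $B^1$, the probability that $B^1$ first enters the small ball and then escapes to level $2^N$ without meeting $B^2$ is at most $c\, 2^{-\delta N}$. Using the strong Markov property at the hitting time of the small ball, and bounding the pre-hitting portion by $P(J_{m,N-3}) \le c\, 2^{-(N-m/3)\xi}$ via Proposition \ref{intersection exp rw}, one obtains the desired bound. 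The main obstacle I anticipate is the careful bookkeeping of the coupling events at the top level $N$ and verifying in the $d=2$ case that $B^{2}[0,T^{2}_{N}]$ remains a useful obstacle for the Beurling estimate despite the absence of headroom above level $N$.
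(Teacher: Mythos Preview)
Your approach is essentially the paper's. A few points where you over-complicate or leave a small gap:

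\textbf{The event $\Psi$ is unnecessary.} At $k=N$ you never need information about $B^i$ beyond $T^i_N$, so there is nothing to ``protect'' via Proposition~\ref{measurability}. The paper simply bounds $P(Q)\le c\exp(-2^{\delta N})$ directly (with $Q^i$ defined up to $T^i_{N+1}$); since $\exp(-2^{\delta N})\ll 2^{-(N-m/3)\xi}2^{-\delta N}$, that already handles $P(J_{m,N},Q)$. Your detour through the strong Markov property at $\tau^i_{N-3}$ is not needed.

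\textbf{Your $d=3$ case split is vacuous.} Since $N\ge m$, the range $21m/60<N\le 20m/31$ is empty. Only your second case (return to $\mathcal{B}(2^{31N/60+1})$ with probability $\le c\,2^{-N/3}$) is relevant; combine it with $P(J_{m,N-2})\le c\,2^{-(N-m/3)\xi}$ and you are done for $d=3$.

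\textbf{The $d=2$ headroom issue you anticipated is real and needs one more idea.} On $Q^c$ the intersection only guarantees that $B^1$ visits the small ball by time $T^1(2^N+2^{31N/60})$, which may exceed $T^1_N$; then $J_{m,N}$ gives no constraint on $B^1$ after $T^1_N$ and your Beurling argument breaks. The paper fixes this by splitting the $\overline S^1$ interval at $\tau^1(2^N-2^{31N/60})$. If the intersection occurs in $[\tau^1(2^N-2^{31N/60}),\tau^1_N]$, then $B^1$ visits $\mathcal{B}(2^{31N/60+1})$ between $T^1(2^N-2^{31N/60+1})$ and $T^1(2^N+2^{31N/60})$, and a thin-annulus estimate bounds this probability by $c\,2^{-N/3}$. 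If the intersection occurs in $[\tau^1_{N-1},\tau^1(2^N-2^{31N/60})]$, then on $Q^c$ the return time $\rho$ satisfies $\rho\le T^1_N$, so $J_{m,N}$ forces $B^1[\rho,T^1_N]\cap B^2[0,T^2_N]=\emptyset$ and Beurling applies exactly as you described.
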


\begin{proof}
Let
\begin{equation*}
Q^{i}= \{ \sup _{0 \le t \le T^{i}_{N+1}} |B^{i}(t)-\overline{S}^{i}(dt) | \ge 2^{\frac{31N}{60}} \}
\end{equation*}
and $Q=Q^{1} \cup Q^{2}$. By Proposition \ref{skorohod},
\begin{equation}\label{qislaw}
P^{w^{1}_{m/3},w^{2}_{m/3}}_{1,2}(Q) \le c \exp (-(2^{\delta N})).
\end{equation}
Assume $Q^{c}$ holds. Then $dT^{1}( 2^{N-1}-2^{\frac{31N}{60}} ) \le \tau^{1}_{N-1} $. Therefore, if $\overline{S}^{1}[\tau^{1}_{N-1}, \tau^{1}_{N}] \cap (S^{2}[0,\tau^{2}_{ \frac{m}{3} }] \cup \gamma^{2} ) \neq \emptyset$, we have
\begin{equation}\label{returning}
B^{1}[T^{1}_{N-2}, \infty ) \cap {\cal B}(2^{ \frac{31N}{60}+1} ) \neq \emptyset.
\end{equation}
For $d=3$, the probability of the event \eqref{returning} is bounded above by $c 2^{-\frac{N}{3}}$. Therefore, by the strong Markov property,
\begin{align*}
&P^{w^{1}_{m/3},w^{2}_{m/3}}_{1,2} ( J_{m,N} , \overline{S}^{1}[\tau^{1}_{N-1}, \tau^{1}_{N}] \cap (S^{2}[0,\tau^{2}_{ \frac{m}{3} }] \cup \gamma^{2} ) \neq \emptyset )  \\
& \le P^{w^{1}_{m/3},w^{2}_{m/3}}_{1,2}( J_{m, N-2} , Q^{c} , B^{1}[T^{1}_{N-2}, \infty ) \cap {\cal B}(2^{ \frac{31N}{60}+1} ) \neq \emptyset ) + c \exp (-(2^{\delta N})) \\
& \le c 2^{-\frac{N}{3}}  2^{-(N-\frac{m}{3}) \xi },
\end{align*}
for $d=3$.

Next we consider the two dimensional case. Assume $\overline{S}^{1}[\tau^{1}(2^{N}- 2^{\frac{31N}{60}}), \tau^{1}_{N}] \cap (S^{2}[0,\tau^{2}_{ \frac{m}{3} }] \cup \gamma^{2} ) \neq \emptyset$ and $Q^{c}$ holds. This implies that $\overline{S}^{1}[\tau^{1}(2^{N}- 2^{\frac{31N}{60}}), \tau^{1}_{N}] \cap {\cal B}(2^{ \frac{31N}{60}} ) \neq \emptyset$. On the event $Q^{c}$, we have
\begin{equation*}
2T^{1}(2^{N}- 2^{\frac{31N}{60}+1}) \le \tau^{1}(2^{N}- 2^{\frac{31N}{60}}) \le \tau^{1}_{N} \le 2T^{1}(2^{N}+ 2^{\frac{31N}{60}}).
\end{equation*}
Therefore,
\begin{equation}\label{bmenter}
B^{1}[T^{1}(2^{N}- 2^{\frac{31N}{60}+1}), T^{1}(2^{N}+ 2^{\frac{31N}{60}})] \cap {\cal B} (2^{ \frac{31N}{60} +1} ) \neq \emptyset.
\end{equation}
Using Proposition \ref{beurling estimate}, the probability of the event \eqref{bmenter} is bounded above by $c 2^{-\frac{N}{3}}$. Hence by the strong Markov property, 
\begin{equation*}
P^{w^{1}_{m/3},w^{2}_{m/3}}_{1,2} ( J_{m,N} , \overline{S}^{1}[\tau^{1}(2^{N}- 2^{\frac{31N}{60}}), \tau^{1}_{N}] \cap (S^{2}[0,\tau^{2}_{ \frac{m}{3} }] \cup \gamma^{2} ) \neq \emptyset ) \le c 2^{-\frac{N}{3}} 2^{-(N-\frac{m}{3}) \xi }.
\end{equation*}
Assume $\overline{S}^{1}[ \tau^{1}_{N-1} , \tau^{1}(2^{N}- 2^{\frac{31N}{60}})] \cap (S^{2}[0,\tau^{2}_{ \frac{m}{3} }] \cup \gamma^{2} ) \neq \emptyset$ and $Q^{c}$ holds. This implies that 
\begin{equation}\label{in}
B^{1}[T^{1}(2^{N-1}- 2^{\frac{31N}{60}}), T^{1}_{N}] \cap {\cal B} (2^{ \frac{31N}{60} +1} ) \neq \emptyset.
\end{equation}
So let 
\begin{equation*}
\rho = \inf \{ t \ge T^{1}(2^{N-1}- 2^{\frac{31N}{60}}) : B^{1}(t) \in{\cal B} (2^{ \frac{31N}{60} +1} ) \}.
\end{equation*}
Again by using Proposition \ref{beurling estimate},
\begin{align*}
&P^{w^{1}_{m/3},w^{2}_{m/3}}_{1,2} ( J_{m,N} , \overline{S}^{1}[ \tau^{1}_{N-1} , \tau^{1}(2^{N}- 2^{\frac{31N}{60}})] \cap (S^{2}[0,\tau^{2}_{ \frac{m}{3} }] \cup \gamma^{2} ) \neq \emptyset , Q^{c} ) \\
& \le P^{w^{1}_{m/3},w^{2}_{m/3}}_{1,2} ( J_{m,N-2} , \rho \in [T^{1}(2^{N-1}- 2^{\frac{31N}{60}}), T^{1}_{N} ] , B^{1}[\rho , T^{1}_{N}] \cap B^{2}[0,T^{2}_{N}] = \emptyset ) \\
& \le  P^{w^{1}_{m/3},w^{2}_{m/3}}_{1,2} ( J_{m,N-2}) 2^{-\frac{N}{3}} \\
&\le c 2^{-(N-\frac{m}{3}) \xi } 2^{-\frac{N}{3}},
\end{align*}
and the lemma is proved.

\end{proof}

To estimate the probability of \eqref{k=N}, we have only to show the following lemma.

\begin{lem}\label{6step}
There exist $ \delta > 0$ and $ c < \infty $ such that
\begin{equation}\label{ineqstep6}
P^{w^{1}_{m/3},w^{2}_{m/3}}_{1,2} ( J_{m,N} , \overline{S}^{1}[\tau^{1}_{N-1}, \tau^{1}_{N}] \cap \overline{S}^{2}[0, \tau ^{2}_{N}] \neq \emptyset ) \le c 2^{-(N-\frac{m}{3}) \xi } 2^{-\delta N}.
\end{equation}
\end{lem}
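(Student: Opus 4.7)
The plan is to mirror the proof of Lemma~\ref{step1}, specialized to the boundary scale $k=N$. First I would introduce the coupling-error events $Q^{i} = \{ \sup_{0 \le t \le T^{i}_{N+1}} |B^{i}(t) - \overline{S}^{i}(dt) | \ge 2^{\frac{31N}{60}} \}$, $Q = Q^{1} \cup Q^{2}$, and the measurability event $\Psi = \Psi^{1}(2^{N+1}) \cap \Psi^{2}(2^{N+1})$ from Proposition~\ref{measurability}. By Proposition~\ref{skorohod} and Proposition~\ref{measurability}, the same arguments that yielded \eqref{Psihigh} and \eqref{q} give
\begin{equation*}
P(J_{m,N}, \Psi^{c}) + P(J_{m,N}, Q) \le c\, 2^{-(N-\frac{m}{3})\xi} \exp(-2^{\delta N}),
\end{equation*}
so it suffices to estimate the event in question intersected with $\Psi \cap Q^{c}$.

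On $Q^{c} \cap \{\overline{S}^{1}[\tau^{1}_{N-1}, \tau^{1}_{N}] \cap \overline{S}^{2}[0, \tau^{2}_{N}] \neq \emptyset\}$, the coupling computation of Lemma~\ref{step1} produces $s, t$ with $\overline{S}^{1}(s) = \overline{S}^{2}(t)$ in the appropriate time ranges, so that $|B^{1}(s/d) - B^{2}(t/d)| \le 2^{\frac{31N}{60}+1}$, and the Brownian closeness event
\begin{equation*}
D_{N} = \{ \text{dist}(B^{1}[T^{1}(2^{N-1} - 2^{\frac{31N}{60}}), T^{1}(2^{N} + 2^{\frac{31N}{60}})], B^{2}[0, T^{2}(2^{N} + 2^{\frac{31N}{60}})]) \le 2^{\frac{31N}{60}+1} \}
\end{equation*}
holds. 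I would then invoke Proposition~\ref{hittable for bm} with $n = 2^{N} + 2^{\frac{31N}{60}}$, $\epsilon = \tfrac{29}{60}$, $b = 2$ to define $Z_{N} = \sup P^{z}(B[0,T_{N+1}] \cap B^{2}[0,T^{2}_{N+1}] = \emptyset)$ and the hittability event $H_{N} = \{Z_{N} > 2^{-\delta' N}\}$, which has probability $\le c\, 2^{-6N}$.

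To bound $P(J_{m,N}, D_{N}, H_{N}^{c}, \Psi, Q^{c})$, let $\tilde t^{\ast}$ be the first time in $[T^{1}(2^{N-1} - 2^{\frac{31N}{60}}), T^{1}(2^{N} + 2^{\frac{31N}{60}})]$ at which $B^{1}$ is within distance $2^{\frac{31N}{60}+1}$ of $B^{2}[0, T^{2}(2^{N} + 2^{\frac{31N}{60}})]$; on $D_{N}$ this time is finite. Since $J_{m,N}$ places no constraint on $B^{1}$ beyond $T^{1}_{N}$, extend $B^{1}$ freely to $T^{1}_{N+1}$ without changing the probability of $J_{m,N}$. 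Applying the strong Markov property at $\tilde t^{\ast}$ with the hittability bound, the conditional probability that the extended $B^{1}$ avoids $B^{2}[0,T^{2}_{N+1}]$ up to $T^{1}_{N+1}$ is at most $Z_{N} \le 2^{-\delta' N}$ on $H_{N}^{c}$; combined with a factor $c\, 2^{-(N-\frac{m}{3})\xi}$ from Proposition~\ref{intersection exp bm} applied to the pre-$\tilde t^{\ast}$ non-intersection, this yields the desired bound. \textbf{The main obstacle} is the mismatch between $J_{m,N}$ (which only constrains $B^{1}$ on $[0,T^{1}_{N}]$) and $Z_{N}$ (which concerns BM behavior up to $T^{1}_{N+1}$): the Markov-extension trick above handles the case $\tilde t^{\ast} \le T^{1}_{N}$, while for the complementary subcase $\tilde t^{\ast} \in (T^{1}_{N}, T^{1}(2^{N} + 2^{\frac{31N}{60}})]$ one obtains the bound via a short-time escape argument near $\partial {\cal B}(2^{N})$ (using Proposition~\ref{beurling estimate} for $d=2$ and a direct harmonic-measure computation for $d=3$, in the spirit of Lemma~\ref{5step}).
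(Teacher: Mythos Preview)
Your proposal has a genuine gap in the ``extension trick.'' The event $J_{m,N}$ only forces $B^{1}[0,T^{1}_{N}]\cap B^{2}[0,T^{2}_{N}]=\emptyset$; it places no constraint on $B^{1}$ beyond $T^{1}_{N}$. Hence, even when $\tilde t^{\ast}\le T^{1}_{N}$, the event you are bounding does \emph{not} imply $B^{1}[\tilde t^{\ast},T^{1}_{N+1}]\cap B^{2}[0,T^{2}_{N+1}]=\emptyset$, and you cannot legitimately multiply by $Z_{N}$. Extending $B^{1}$ ``freely'' to $T^{1}_{N+1}$ changes nothing: the extension will, generically, hit $B^{2}$ (that is the content of hittability), so the avoidance event you want to insert has probability close to zero, not close to one. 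The underlying obstruction is geometric: in your $D_{N}$, the close-approach point $B^{1}(\tilde t^{\ast})$ may sit at radius as large as $2^{N}+2^{31N/60}$, and in particular arbitrarily close to (or past) $2^{N}$. Then the avoidance granted by $J_{m,N}$---only up to $T^{1}_{N}$---is over a negligible annulus, and no hittability bound is available. Your case split $\tilde t^{\ast}\le T^{1}_{N}$ versus $\tilde t^{\ast}>T^{1}_{N}$ does not isolate this bad regime: the problematic case $|B^{1}(\tilde t^{\ast})|\in[2^{N}-o(2^{N}),2^{N}]$ falls in your first subcase, where the extension trick is supposed to apply but does not.

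The paper resolves precisely this difficulty by inserting a buffer scale $2^{N}-2^{2N/3}$ and decomposing the random-walk intersection accordingly. In Lemma~\ref{6stepsub2} the intersection is restricted to $\overline S^{1}[\tau^{1}_{N-1},\tau^{1}(2^{N}-2^{2N/3})]\cap \overline S^{2}[0,\tau^{2}(2^{N}-2^{2N/3})]$, so under $Q^{c}$ the Brownian close approach occurs at radius at most $2^{N}-2^{2N/3}+2^{31N/60}$; from there $J_{m,N}$ itself forces $B^{1}$ to avoid $B^{2}$ over a genuine annulus of outer radius $2^{N}$, and hittability (with $Z$ defined using $T_{N}$, not $T_{N+1}$) yields the extra $2^{-\delta N}$. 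The remaining boundary pieces---intersection with $\overline S^{2}[\tau^{2}(2^{N}-2^{2N/3}),\tau^{2}_{N}]$ (Lemma~\ref{6stepsub1}) and the symmetric statement for $\overline S^{1}$ (Remark~\ref{rem1})---are handled by a different, confinement-type argument: the path from $\tau(2^{N}-2^{2N/3})$ to $\tau_{N}$ typically stays in a ball of radius $2^{3N/4}$, and hitting such a small ball from radius $\approx 2^{N-1}$ already costs $2^{-\delta N}$. Your sketch needs this two-scale decomposition (or an equivalent device) to close.
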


Before we start to prove this lemma, we need to prepare several lemmas.

\begin{lem}\label{6stepsub1}
There exist $ \delta > 0$ and $ c < \infty $ such that
\begin{equation}\label{ineqstep6sub1}
P^{w^{1}_{m/3},w^{2}_{m/3}}_{1,2} ( J_{m,N} , \overline{S}^{1}[\tau^{1}_{N-1}, \tau^{1}_{N}] \cap \overline{S}^{2}[\tau^{2}(2^{N}-2^{\frac{2N}{3}}), \tau ^{2}_{N}] \neq \emptyset ) \le c 2^{-(N-\frac{m}{3}) \xi } 2^{-\delta N}.
\end{equation}
\end{lem}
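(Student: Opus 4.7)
My plan is to follow the Skorohod-coupling and Brownian-hittability strategy used in Lemmas \ref{step1} and \ref{step3}, specialized to the top scale $k = N$. First, set up coupled pairs $(B^i, \overline{S}^i)$ as in Section 2.2, and introduce the standard quality events
$$Q^i = \Big\{\sup_{0 \le t \le T^i_{N+1}} |B^i(t) - \overline{S}^i(dt)| \ge 2^{\frac{31N}{60}}\Big\}, \quad Q = Q^1 \cup Q^2,$$
together with $\Psi = \Psi^1(2^{N+1}) \cap \Psi^2(2^{N+1})$ from Proposition \ref{measurability}. Propositions \ref{skorohod} and \ref{measurability} give $P(\Psi^c) + P(Q) \le c \exp(-2^{\delta N})$, so the contribution of $\Psi^c \cup Q$ to \eqref{ineqstep6sub1} is negligible.

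Second, on $Q^c$ the Skorohod bounds translate any walk intersection $\overline{S}^1[\tau^1_{N-1},\tau^1_N] \cap \overline{S}^2[\tau^2(2^N-2^{\frac{2N}{3}}),\tau^2_N] \ne \emptyset$ into the Brownian closeness event
$$D_N = \Big\{\text{dist}\big(B^1[T^1(2^{N-1} - 2^{\frac{31N}{60}}), T^1_{N+1}],\; B^2[T^2(2^N - 2^{\frac{2N}{3}} - 2^{\frac{31N}{60}}), T^2_{N+1}]\big) \le 2^{\frac{31N}{60}+1}\Big\}.$$
Apply Proposition \ref{hittable for bm} to $B^2$ at scale $2^{N+1}$ with $\epsilon$ small enough that $b(2^{N+1})^{1-\epsilon} \ge 2^{\frac{31N}{60}+1}$ and $M = 6$; this yields $\delta > 0$ such that $H_N := \{Z_{2^{N+1}} \le 2^{-\delta N}\}$ satisfies $P(H_N^c) \le c 2^{-6N}$, which is absorbed by the target bound since $\xi < 2$.

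The main estimate is $P(J_{m,N}, D_N, H_N) \le c 2^{-(N-\frac{m}{3})\xi} 2^{-\delta N}$. Since $B^1$ is independent of $B^2$ and $H_N$ is $\sigma(B^2)$-measurable, I condition on $B^2$. Define $\sigma = \inf\{t \ge T^1(2^{N-1} - 2^{\frac{31N}{60}}) : \text{dist}(B^1(t), B^2[0, T^2_{N+1}]) \le 2^{\frac{31N}{60}+1}\}$, which satisfies $\sigma \le T^1_{N+1}$ on $D_N$ and $|B^1(\sigma)| \le 2^{N+1}$. By the strong Markov property of $B^1$ at $\sigma$ and the very definition of $Z_{2^{N+1}}$, on $H_N$ the conditional probability that $B^1$ avoids $B^2[0, T^2_{N+1}]$ after time $\sigma$ is at most $Z_{2^{N+1}} \le 2^{-\delta N}$. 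What remains is the probability that $B^1[0, T^1(2^{N-1} - 2^{\frac{31N}{60}})]$ avoids $B^2[0, T^2_N]$ together with the static pieces $\gamma^2 \cup S^2[0, \tau^2_{m/3}]$, which by Proposition \ref{intersection exp bm} and scaling is bounded by $c 2^{-(N-\frac{m}{3})\xi}$. Adding the negligible bounds on $\Psi^c \cup Q$ and $H_N^c$ completes the proof.

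The main obstacle is the conditioning and strong Markov juggling in the third step: one must condition on $B^2$ first to invoke the pathwise statement $H_N$, then apply the strong Markov property of $B^1$ at the random time $\sigma$, and finally extract the full $(N - m/3)\xi$ cost from the pre-$\sigma$ non-intersection, all without double-counting between the mutual-avoidance and fixed-set-avoidance constituents of $J_{m,N}$. The role of the cutoff time $T^1(2^{N-1} - 2^{\frac{31N}{60}})$ is precisely to guarantee that $B^1$ has completed the non-intersection journey essentially up to scale $N$ before the hittability argument is invoked in the top annulus.
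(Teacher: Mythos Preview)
There is a genuine gap in your argument, and it is precisely the reason the paper does \emph{not} use the hittability strategy of Lemmas~\ref{step1} and~\ref{step3} here.

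The strategy of those lemmas works at scales $k \le N-3$ because one can enlarge the window: after the closeness time one lets $B^{1}$ run up to $T^{1}_{k+1}$ (or $T^{1}_{k+2}$), and the avoidance contained in $J_{m,N}$ at that larger scale is what the hittability bound contradicts. At the top scale this room is gone. Your stopping time $\sigma$ satisfies $\sigma \le T^{1}_{N+1}$ (as you say), and the hittability statement $Z_{2^{N+1}} \le 2^{-\delta N}$ controls the probability that $B^{1}[\sigma, T^{1}_{N+2}]$ avoids $B^{2}[0,T^{2}_{N+2}]$. But $J_{m,N}$ only asserts $B^{1}[0,T^{1}_{N}] \cap B^{2}[0,T^{2}_{N}] = \emptyset$; it says nothing about $B^{1}$ after $T^{1}_{N}$. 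Since $\sigma$ may well exceed $T^{1}_{N}$ (and even when $\sigma < T^{1}_{N}$, the remaining run $[\sigma,T^{1}_{N}]$ is too short for the hittability proposition, which needs the starting radius to be at most half the target radius), your product decomposition fails: you cannot extract the factor $2^{-\delta N}$ from $J_{m,N}$.

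The paper's proof takes a completely different route, exploiting the feature that singles out Lemma~\ref{6stepsub1}, namely that the intersecting piece of $\overline{S}^{2}$ is its very last segment $[\tau^{2}(2^{N}-2^{2N/3}),\tau^{2}_{N}]$. One first shows that, except on an event of probability $c\,2^{-\delta N}$, this segment stays in a ball of radius $2^{3N/4}$ around $\overline{S}^{2}(\tau^{2}(2^{N}-2^{2N/3}))$. Then the walk intersection forces $\overline{S}^{1}[\tau^{1}_{N-1},\tau^{1}_{N}]$ to hit that small ball, which (via $Q^{c}$) forces $B^{1}[T^{1}(2^{N-1}-2^{31N/60}),T^{1}(2^{N}+2^{31N/60})]$ to hit a ball of radius $2^{3N/4+1}$ sitting at distance $\asymp 2^{N}$; this happens with probability at most $c\,2^{-N/4}$ uniformly in the center. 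The strong Markov property combined with $P(J_{m,N-2}) \le c\,2^{-(N-m/3)\xi}$ then gives the bound. This localization argument is exactly why Lemma~\ref{6step} is split into Lemmas~\ref{6stepsub1} and~\ref{6stepsub2}: in the latter the closeness occurs before $T^{1}(2^{N}-2^{2N/3}+2^{31N/60}) < T^{1}_{N}$, so the hittability argument \emph{does} apply there.
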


\begin{proof}
Let $Q$ be the event defined in the proof of Lemma \ref{5step}. Let 
\begin{equation*}
\sigma = \inf \{ k \ge \tau^{2}(2^{N}-2^{\frac{2N}{3}}) : \overline{S}^{2}_{k} \in \partial {\cal B} ( \overline{S}^{2} ( \tau^{2}(2^{N}-2^{\frac{2N}{3}})) , 2^{\frac{3N}{4}}) \}.
\end{equation*}
If $Q^{c}$ holds and $\sigma < \tau^{2}_{N}$, then
\begin{equation}\label{nointboundary}
\overline{\sigma}:= \inf \{ t \ge T^{2}( 2^{N}-2^{\frac{2N}{3}} + 2^{\frac{31N}{60}}) : B^{2}(t) \in \partial {\cal B} ( B^{2} ( T^{2}( 2^{N}-2^{\frac{2N}{3}} + 2^{\frac{31N}{60}}) , 2^{\frac{3N}{4}} - 2^{\frac{31N}{60}}) \} \le T^{2}( 2^{N} + 2^{\frac{31N}{60}} ).
\end{equation}
It is easy to see that the probability of \eqref{nointboundary} is bounded above by $c 2^{-\delta N}$ for some $c < \infty$ and $\delta >0$. Hence by the strong Markov property, 
\begin{equation*}
P^{w^{1}_{m/3},w^{2}_{m/3}}_{1,2} ( J_{m,N} , \overline{S}^{1}[\tau^{1}_{N-1}, \tau^{1}_{N}] \cap \overline{S}^{2}[\tau^{2}(2^{N}-2^{\frac{2N}{3}}), \tau ^{2}_{N}] \neq \emptyset , \sigma < \tau^{2}_{N} ) \le c 2^{-\delta N} 2^{-(N-\frac{m}{3}) \xi}.
\end{equation*}
Now assume $\sigma < \tau^{2}_{N}$. Then $\overline{S}^{2}[\tau^{2}(2^{N}-2^{\frac{2N}{3}}), \tau ^{2}_{N}] \subset {\cal B} ( \overline{S}^{2} ( \tau^{2}(2^{N}-2^{\frac{2N}{3}})) , 2^{\frac{3N}{4}})$. Therefore $\overline{S}^{1}[\tau^{1}_{N-1}, \tau^{1}_{N}] \cap \overline{S}^{2}[\tau^{2}(2^{N}-2^{\frac{2N}{3}}), \tau ^{2}_{N}] \neq \emptyset$ implies that 
\begin{equation}\label{rwballhit}
\overline{S}^{1}[\tau^{1}_{N-1}, \tau^{1}_{N}] \cap {\cal B} ( \overline{S}^{2} ( \tau^{2}(2^{N}-2^{\frac{2N}{3}})) , 2^{\frac{3N}{4}}) \neq \emptyset.
\end{equation}
If $Q^{c}$ and \eqref{rwballhit} hold, we see that 
\begin{equation}\label{bmballhit}
B^{1}[T^{1}(2^{N-1}-2^{\frac{31N}{60}} ) , T^{1}( 2^{N}+2^{\frac{31N}{60}} )] \cap {\cal B} ( B^{2}( T^{2}(2^{N}-2^{\frac{2N}{3}})) , 2^{\frac{3N}{4}+1}) \neq \emptyset.
\end{equation}
For any $ x \in \partial {\cal B} ( 2^{N}-2^{\frac{2N}{3}})$, we have
\begin{equation*}
P_{1} ( B^{1} [T^{1}(2^{N-1}-2^{\frac{31N}{60}} ) , T^{1}( 2^{N}+2^{\frac{31N}{60}} )] \cap {\cal B} ( x, 2^{\frac{3N}{4}+1}) \neq \emptyset ) \le c 2^{-\frac{N}{4}}.
\end{equation*}
By the strong Markov property,
\begin{equation*}
P^{w^{1}_{m/3},w^{2}_{m/3}}_{1,2} ( J_{m,N} , \overline{S}^{1}[\tau^{1}_{N-1}, \tau^{1}_{N}] \cap \overline{S}^{2}[\tau^{2}(2^{N}-2^{\frac{2N}{3}}), \tau ^{2}_{N}] \neq \emptyset , \sigma \ge \tau^{2}_{N} ) \le c 2^{-\delta N} 2^{-(N-\frac{m}{3}) \xi},
\end{equation*}
and hence prove the lemma.

\end{proof}

\begin{rem}\label{rem1}
Similar arguments in the proof of Lemma \ref{6stepsub1} give that 
\begin{equation}\label{omit}
P^{w^{1}_{m/3},w^{2}_{m/3}}_{1,2} ( J_{m,N} , \overline{S}^{1}[\tau^{1}(2^{N}-2^{\frac{2N}{3}}), \tau^{1}_{N}] \cap \overline{S}^{2}[0, \tau ^{2}_{N}] \neq \emptyset ) \le c 2^{-(N-\frac{m}{3}) \xi } 2^{-\delta N}.
\end{equation}
\end{rem}

By Lemma \ref{6stepsub1} and Remark \ref{rem1}, we have only to show the following lemma to prove Lemma \ref{6step}.

\begin{lem}\label{6stepsub2}
There exist $ \delta > 0$ and $ c < \infty $ such that
\begin{equation}\label{ineqstep6sub2}
P^{w^{1}_{m/3},w^{2}_{m/3}}_{1,2} ( J_{m,N} , \overline{S}^{1}[\tau^{1}_{N-1}, \tau^{1}(2^{N}-2^{\frac{2N}{3}})] \cap \overline{S}^{2}[0 , \tau^{2}(2^{N}-2^{\frac{2N}{3}})] \neq \emptyset ) \le c 2^{-(N-\frac{m}{3}) \xi } 2^{-\delta N}.
\end{equation}
\end{lem}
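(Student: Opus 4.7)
The proof will mirror the strategy of Lemma \ref{step1}: couple $(\overline{S}^i, B^i)$ via the Skorohod embedding, translate the SRW close-approach into a BM close-approach, apply the hittability estimate (Proposition \ref{hittable for bm}) to a suitable random variable, and conclude by the strong Markov property of $B^1$. The novelty is that the close approach now sits deep inside $\mathcal{B}_N$, in the annulus of radius $\le 2^N - 2^{2N/3}$, which gives a cushion of width $\gtrsim 2^{2N/3}$ between the approach region and $\partial\mathcal{B}_N$ that I will exploit in Step 4.

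First I would introduce the Skorohod error events $Q^i=\{\sup_{0\le t\le T^i_{N+1}}|B^i(t)-\overline{S}^i(dt)|\ge 2^{31N/60}\}$, $Q=Q^1\cup Q^2$, and the measurability event $\Psi=\Psi^1(2^{N+1})\cap\Psi^2(2^{N+1})$. Propositions \ref{skorohod} and \ref{measurability}, together with the argument that produced \eqref{Psihigh} and \eqref{q}, give $P(J_{m,N},Q\cup\Psi^c)\le c\,2^{-(N-m/3)\xi}\exp(-2^{\delta N})$, so it is enough to bound the probability on $\Psi\cap Q^c$. On $Q^c$ the SRW intersection in the statement forces the BM close-approach event $D:=\{\mathrm{dist}(B^1[T^1(2^{N-1}-2^{31N/60}),T^1(r)],B^2[0,T^2(r)])\le 2^{31N/60+1}\}$, where $r:=2^N-2^{2N/3}+2^{31N/60}$; note $r+2^{31N/60+1}<2^N-\tfrac12 2^{2N/3}$ for $N$ large.

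Next I would define the hittability variable
\begin{equation*}
Z:=\sup_{z\in A}P^z\bigl(B[0,T_{N+1}]\cap B^2[0,T^2_{N+1}]=\emptyset\bigr),
\end{equation*}
where $A=\{z:|z|\le 2^N,\ \mathrm{dist}(z,B^2[0,T^2_{N+1}])\le 2^{31N/60+1}\}$, extending $B^2$ past $T^2_N$ as an independent BM to interpret $T^2_{N+1}$. Proposition \ref{hittable for bm} applied at scale $n=2^N$ with $\epsilon=29/60$, $b=2$, $M=6$ then yields $P(Z>2^{-\delta N})\le c\,2^{-6N}$, which already gives an acceptable contribution since $P(J_{m,N})\le c\,2^{-(N-m/3)\xi}$ by Proposition \ref{intersection exp bm}. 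It remains to bound $P(J_{m,N},D,\Psi,Q^c,Z\le 2^{-\delta N})$. Here I would use the cushion: conditional on $J_{m,N}$, the BMs extended beyond $T^1_N,T^2_N$ continue to avoid each other up to $T_{N+1}$ with probability at least a constant $c_0>0$ (by the lower bound in Proposition \ref{intersection exp bm} at the exit configuration). Writing $\tilde J$ for this extended nonintersection event, I get $P(J_{m,N},D)\le c_0^{-1}P(\tilde J,D)$, and on $\tilde J\cap D\cap\{Z\le 2^{-\delta N}\}$ the strong Markov property of $B^1$ at the first close-approach time $\rho\ge T^1_{N-2}$, combined with the definition of $Z$, yields $P(\tilde J,D,Z\le 2^{-\delta N})\le P(J_{m,N-2})\cdot 2^{-\delta N}\le c\,2^{-(N-m/3)\xi}2^{-\delta N}$, as required.

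The main obstacle is the scale mismatch in Step 4: unlike Lemma \ref{step1}, where $J_{m,k+1}$ naturally matches the hittability time $T(2^{k+1})$, here $J_{m,N}$ only controls avoidance up to $T^1_N,T^2_N$ while the hittability variable $Z$ runs up to $T_{N+1}$. The extension-and-coupling step $P(J_{m,N},D)\le c_0^{-1}P(\tilde J,D)$ is what makes hittability applicable, and justifying the uniform lower bound $c_0>0$ requires ruling out a pathological configuration where $B^1(T^1_N)$ and $B^2(T^2_N)$ are unusually close; this is handled exactly as in Lemma \ref{5step}, using Proposition \ref{beurling estimate} for $d=2$ and a direct hitting estimate for $d=3$, which is the technical heart of the argument.
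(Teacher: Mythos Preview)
Your overall strategy is sound, but you take a detour that the paper avoids. The key difference is in the choice of target radius for the hittability variable $Z$.

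The paper defines $Z$ to run only up to $T_N$, not $T_{N+1}$: explicitly,
\[
Z=\sup_{z} P^{z}\bigl(B[0,T_{N}]\cap B^{2}[0,T^{2}_{N}]=\emptyset\bigr),
\]
with the supremum over $z\in\mathcal{B}(r)$, $r=2^{N}-2^{2N/3}+2^{31N/60}$, satisfying $\mathrm{dist}(z,B^{2}[0,T^{2}(r)])\le 2^{31N/60+1}$. This is precisely where the cushion you noted is spent: since $2^{N}-r\asymp 2^{2N/3}\gg 2^{31N/60}$, Proposition~\ref{hittable for bm} still applies and gives $P(Z\ge 2^{-\delta N})\le c\,2^{-6N}$. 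Because $Z$ already matches the radius in $J_{m,N}$, the strong Markov property of $B^{1}$ at the first close-approach time immediately yields
\[
P\bigl(J_{m,N},\,D,\,Q^{c},\,Z\le 2^{-\delta N}\bigr)\le 2^{-\delta N}\,P(J_{m,N-2}),
\]
with no extension step and no endpoint-separation lemma. That is the entire proof.

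Your decision to run $Z$ up to $T_{N+1}$ creates the scale mismatch you then try to repair via $P(J_{m,N},D)\le c_{0}^{-1}P(\tilde J,D)$. That repair can be made to work, but it is not free: the uniform lower bound $c_{0}>0$ fails when $B^{1}(T^{1}_{N})$ and $B^{2}(T^{2}_{N})$ are close, so you need a separation estimate of the form $P\bigl(J_{m,N},\,\mathrm{dist}(B^{1}(T^{1}_{N}),B^{2}[0,T^{2}_{N}])\le 2^{(1-\epsilon)N}\bigr)\le c\,2^{-(N-m/3)\xi}2^{-\delta N}$. This is the Brownian analogue of Lemma~\ref{djlarge}, not of Lemma~\ref{5step} (which concerns return to a small ball near the origin, a different event), so ``handled exactly as in Lemma~\ref{5step}'' is inaccurate, though the needed separation lemma is standard and uses the same hittability input. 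The upshot: your route is viable but strictly longer; the cushion should be spent on shrinking the target radius of $Z$ down to $2^{N}$, which removes the need for any extension argument.
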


\begin{proof}
Let $Q$ be the event defined in the proof of Lemma \ref{5step}. If 
\begin{equation*}
\overline{S}^{1}[\tau^{1}_{N-1}, \tau^{1}(2^{N}-2^{\frac{2N}{3}})] \cap \overline{S}^{2}[0 , \tau^{2}(2^{N}-2^{\frac{2N}{3}})] \neq \emptyset
\end{equation*}
and $Q^{c}$ holds, then we have
\begin{equation}\label{intersection}
\text{dist} ( B^{1}[ T^{1}(2^{N-1}-2^{ \frac{31N}{60} }) , T^{1}(2^{N} -2^{\frac{2N}{3}} +2^{ \frac{31N}{60} }) ] , B^{2} [ 0, T^{2}( 2^{N}-2^{\frac{2N}{3}} + 2^{ \frac{31N}{60} } ) ] ) \le 2^{ \frac{31N}{60}  + 1}.
\end{equation}
Let 
\begin{equation*}
Z= \sup P^{z}( B[0, T_{N}] \cap B^{2}[0, T^{2}_{N}] = \emptyset ),
\end{equation*}
where the supremum is over all $z$ with $ z \in {\cal B} ( 2^{N} -2^{\frac{2N}{3}} +2^{ \frac{31N}{60} } ) $ and 
\begin{equation*}
\text{dist} ( z, B^{2} [ 0, T^{2}( 2^{N}-2^{\frac{2N}{3}} + 2^{ \frac{31N}{60} } ) ] ) \le 2^{ \frac{31N}{60}  + 1}.
\end{equation*}
Then by Proposition \ref{hittable for bm}, 
\begin{equation*}
P^{w^{2}_{m/3}}_{2} ( Z \ge 2^{-\delta N} ) \le c2^{-6N},
\end{equation*}
for some $\delta > 0$ and $c < \infty$. Therefore, 
\begin{equation*}
P^{w^{1}_{m/3},w^{2}_{m/3}}_{1,2} ( J_{m,N} , \overline{S}^{1}[\tau^{1}_{N-1}, \tau^{1}(2^{N}-2^{\frac{2N}{3}})] \cap \overline{S}^{2}[0 , \tau^{2}(2^{N}-2^{\frac{2N}{3}})] \neq \emptyset , Q^{c} ) 
\end{equation*}
is bounded above by
\begin{align*}\label{3dimesti}
P^{w^{1}_{m/3},w^{2}_{m/3}}_{1,2} ( J_{m,N} , &\text{dist} ( B^{1}[ T^{1}(2^{N-1}-2^{ \frac{31N}{60} }) , T^{1}(2^{N} -2^{\frac{2N}{3}} +2^{ \frac{31N}{60} }) ] , B^{2} [ 0, T^{2}( 2^{N}-2^{\frac{2N}{3}} + 2^{ \frac{31N}{60} } ) ] )  \\
& \le 2^{ \frac{31N}{60}  + 1} , Z \le 2^{-\delta N} ) + c2^{-6N}.
\end{align*}
Using the strong Markov property for $B^{1}$, we see that this probability is bounded above by 
\begin{equation*}
2^{-\delta N}  P^{w^{1}_{m/3},w^{2}_{m/3}}_{1,2} ( J_{m, N-2}),
\end{equation*}
and hence the proof is finished.

\end{proof}

\subsubsection{Conclusion \ Lower Bound}
Combining estimates obtained in subsections \ref{sub1}, \ref{sub2} and \ref{sub3} with \eqref{sepcase}, we have the following proposition.
\begin{prop}\label{lowerbound}
There exist $\delta > 0$ and $c < \infty$ such that 
\begin{equation}\label{lowerboundesti}
P^{w^{1}_{m/3},w^{2}_{m/3}}_{1,2} ( J_{m,N}) \le P^{w^{1}_{m/3},w^{2}_{m/3}}_{1,2} ( R_{\frac{m}{3}, N} )  + c 2^{-(N-\frac{m}{3}) \xi } 2^{-\delta m },
\end{equation}
on the event $F_{m} \cap G_{m} \cap H_{m}$.
\end{prop}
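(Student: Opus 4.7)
The plan is to assemble Proposition \ref{lowerbound} from the decomposition in \eqref{sepcase} together with the family of per-$k$ estimates established in Subsections \ref{sub1}, \ref{sub2}, and \ref{sub3}. First I would record \eqref{sepcase}, which on $F_m \cap G_m \cap H_m$ gives
\begin{equation*}
P^{w^{1}_{m/3},w^{2}_{m/3}}_{1,2} ( J_{m,N} ) \le P^{w^{1}_{m/3},w^{2}_{m/3}}_{1,2} (R_{\frac{m}{3} , N} ) + \sum_{ k= \frac{m}{3} +1}^{N} P^{w^{1}_{m/3},w^{2}_{m/3}}_{1,2} ( J_{m,N} , \beta = k ) + c 2^{-N}.
\end{equation*}
Thus the whole task reduces to controlling the sum over $k$ of $P(J_{m,N},\beta=k)$ and showing that the final $c\,2^{-N}$ is absorbed into the target error $c\,2^{-(N-m/3)\xi}2^{-\delta m}$.

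Next I would split the sum according to the three regimes treated in the preceding subsections. For $\frac{m}{3} < k \le \frac{21m}{60}$, the definition of $\beta$ forces either $\overline{S}^1[\tau^1_{k-1},\tau^1_k]\cap \overline{S}^2[0,\tau^2_k]\ne\emptyset$ or a hit of $\overline{S}^1[\tau^1_{k-1},\tau^1_k]$ on $S^2[0,\tau^2_{m/3}]\cup\gamma^2$ (or the symmetric event with the roles of $1$ and $2$ swapped), so Lemmas \ref{step1} and \ref{step2} bound each term by $c\,2^{-(N-\frac{m}{3})\xi}2^{-\delta k}$. For $\frac{21m}{60}< k \le N-3$, Lemmas \ref{step3} and \ref{4step} provide the same type of bound. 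Finally, for $N-2\le k\le N$, Lemmas \ref{5step} and \ref{6step} give $c\,2^{-(N-\frac{m}{3})\xi}2^{-\delta N}$; here the symmetry between the two walks can be invoked to handle the case $i=2$ by the same arguments, since \eqref{k=N} is a union indexed by $i$.

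Summing the geometric series $\sum_{k\ge m/3+1} 2^{-\delta k}$, only finitely many starting terms matter and the total is at most a constant multiple of $2^{-\delta(m/3)}$, so
\begin{equation*}
\sum_{k=\frac{m}{3}+1}^{N} P^{w^{1}_{m/3},w^{2}_{m/3}}_{1,2} ( J_{m,N}, \beta=k) \le c\, 2^{-(N-\frac{m}{3})\xi} 2^{-\delta' m}
\end{equation*}
for a possibly smaller $\delta'>0$. The stray $c\,2^{-N}$ from \eqref{sepcase}, coming from the event $\{\beta=\infty\}$, is dominated by $c\,2^{-(N-\frac{m}{3})\xi}2^{-\delta' m}$ once $\delta'$ is chosen small enough: indeed $2^{-N} = 2^{-(N-m/3)\xi}\cdot 2^{-N(1-\xi)-m\xi/3}$, and since $m\le N$, a routine comparison in each dimension ($\xi_2=5/4$, $\xi_3<1$) shows the residual factor is $\le 2^{-\delta'' m}$ for some $\delta''>0$. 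Combining everything yields \eqref{lowerboundesti}.

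I do not expect a genuine obstacle here, since the heavy lifting is already carried out in Lemmas \ref{step1}--\ref{6stepsub2}; the main point of care is ensuring the single exponent $\delta$ in the conclusion is uniform across the three regimes and across the two symmetric choices $i=1,2$, which is handled by taking the minimum of the finitely many $\delta$'s produced above.
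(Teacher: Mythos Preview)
Your approach is exactly the paper's: the proof there is the single sentence ``Combining estimates obtained in subsections \ref{sub1}, \ref{sub2} and \ref{sub3} with \eqref{sepcase}'', and you have spelled out precisely that combination---the decomposition \eqref{sepcase}, the three regimes, the relevant lemmas for each, and the geometric summation.

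There is, however, one genuine slip in your handling of the residual $c\,2^{-N}$ term coming from $\{\beta=\infty\}$. You write $2^{-N}=2^{-(N-m/3)\xi}\cdot 2^{-N(1-\xi)-m\xi/3}$ and claim the second factor is $\le 2^{-\delta'' m}$. For $d=3$ this is fine since $\xi_3<1$. But for $d=2$ one has $\xi_2=5/4>1$, so the residual factor is $2^{N/4-5m/12}$, which blows up as $N\to\infty$ for fixed $m$; no constant $c$ can absorb it. The fix is not to bound $P(\beta=\infty)$ in isolation but rather $P^{w^{1}_{m/3},w^{2}_{m/3}}_{1,2}(J_{m,N},\,\beta=\infty)$: since $\beta=\infty$ forces $\overline{S}^{1}(\tau^{1}_{N})=\overline{S}^{2}(\tau^{2}_{N})$, this event is contained in $\{\overline{S}^{1}[\tau^{1}_{N-1},\tau^{1}_{N}]\cap \overline{S}^{2}[0,\tau^{2}_{N}]\ne\emptyset\}$, and Lemma \ref{6step} already gives the bound $c\,2^{-(N-m/3)\xi}2^{-\delta N}\le c\,2^{-(N-m/3)\xi}2^{-\delta m}$. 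With this correction the argument goes through uniformly in both dimensions.
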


\subsubsection{Upper bound}

From this subsection, we will give an upper bound of $P^{w^{1}_{m/3},w^{2}_{m/3}}_{1,2}(R_{\frac{m}{3} , N})$ by using $P^{w^{1}_{m/3},w^{2}_{m/3}}_{1,2} (J_{m,N})$ on the event $F_{m} \cap G_{m} \cap H_{m}$. For this purpose, define
\begin{eqnarray}\label{firstintbm}
\beta ^{\sharp} = \inf \left\{ \frac{m}{3} \le k \le N :
\begin{array}{ll}
B^{1}[T ^{1}_{k}, T^{1}_{N}] \cap ( B^{2}[0, T ^{2}_{k}]  \cup \text{PATH}^{2}_{f} )= \emptyset    \\
B^{2}[T ^{2}_{k}, T^{2}_{N}] \cap ( B^{1}[0, T ^{1}_{k}]  \cup \text{PATH}^{1}_{f} )= \emptyset    \\
B^{1}[T ^{1}_{k}, T^{1}_{N}] \cap B^{2}[T ^{2}_{k}, T^{2}_{N}] = \emptyset
\end{array}
\right\}.
\end{eqnarray}
Note that $\beta ^{\sharp} \le N$ almost surely and $\beta ^{\sharp}= \frac{m}{3}$ implies $J_{m,N}$ holds. Therefore,
\begin{align}\label{upperbound}
P^{w^{1}_{m/3},w^{2}_{m/3}}_{1,2}(R_{\frac{m}{3} , N}) & = P^{w^{1}_{m/3},w^{2}_{m/3}}_{1,2}(R_{\frac{m}{3} , N} , \frac{m}{3} \le \beta ^{\sharp} \le N ) \notag \\
&=P^{w^{1}_{m/3},w^{2}_{m/3}}_{1,2}(R_{\frac{m}{3} , N} , J_{m,N} ) + \textstyle\sum\limits_{ k= \frac{m}{3} +1}^{N} P^{w^{1}_{m/3},w^{2}_{m/3}}_{1,2}(R_{\frac{m}{3} , N} , \beta ^{\sharp} = k ) \notag \\
&\le P^{w^{1}_{m/3},w^{2}_{m/3}}_{1,2}( J_{m,N}) + \textstyle\sum\limits_{ k= \frac{m}{3} +1}^{N} P^{w^{1}_{m/3},w^{2}_{m/3}}_{1,2}(R_{\frac{m}{3} , N} , \beta ^{\sharp} = k ).
\end{align}
We will give bounds for the second term in the right hand side of \eqref{upperbound}. For $ \frac{m}{3} +1 \le k \le N$, $\beta ^{\sharp} = k$ implies that
\begin{equation}\label{rwnohitbmhit}
\bigcup _{i=1,2} \{ B^{1}[T ^{1}_{k}, T^{1}_{N}] \cap B^{2}[T ^{2}_{k}, T^{2}_{N}] = \emptyset \} \cap \{ B^{i}[T^{i}_{k-1}, T^{i}_{k}] \cap ( B^{3-i}[0,T^{3-i}_{k}] \cup \text{PATH}^{3-i}_{f} ) \neq \emptyset \}.
\end{equation}
We only consider for $i=1$ in \eqref{rwnohitbmhit}.

\subsubsection{Bounds for $ \frac{21m}{60} \le k \le N-3 $}\label{sub1-1}
\begin{lem}\label{sub1-1}
There exist $\delta > 0$ and $ c < \infty $ such that 
\begin{equation}\label{ineqsub1-1}
P^{w^{1}_{m/3},w^{2}_{m/3}}_{1,2}( B^{1}[T ^{1}_{k}, T^{1}_{N}] \cap B^{2}[T ^{2}_{k}, T^{2}_{N}] = \emptyset , B^{1}[T^{1}_{k-1}, T^{1}_{k}] \cap B^{2}[0,T^{2}_{k}] \neq \emptyset , R_{\frac{m}{3} , N} ) \le c 2^{ -(N- \frac{m}{3}) \xi } 2^{-\delta k} .
\end{equation}
\end{lem}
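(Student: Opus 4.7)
The plan is to mirror the proof of Lemma \ref{step1}, with the roles of Brownian motion and simple random walk interchanged. Adopt the events $Q^i = \{\sup_{0 \le s \le T^i_{k+1}} |B^i(s) - \overline{S}^i(ds)| \ge 2^{\frac{31k}{60}}\}$, $Q = Q^1 \cup Q^2$, and $\Psi$ from the proof of Lemma \ref{step1}; by Propositions \ref{skorohod} and \ref{measurability}, $P(Q) + P(\Psi^c) \le c\exp(-2^{\delta k})$. Since $R_{\frac{m}{3},N}$ is contained in $\{\overline{S}^1[\tau^1_{k+2}, \tau^1_N] \cap \overline{S}^2[\tau^2_{k+2}, \tau^2_N] = \emptyset\}$, the strong Markov property of the random walks at $\tau^i_{k+2}$ together with Proposition \ref{intersection exp rw} yield
\begin{equation*}
P(R_{\frac{m}{3},N}, \Psi^c) + P(R_{\frac{m}{3},N}, \Psi, Q) \le c\, 2^{-(N-k-2)\xi}\exp(-2^{\delta k}),
\end{equation*}
which is comfortably inside the target. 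Thus it suffices to work on $\Psi \cap Q^c$.

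On $Q^c$, the Skorohod bound forces $dT^i_j$ to lie within $\pm 2^{\frac{31k}{60}}$ of the corresponding random-walk hitting times; hence on $\Psi \cap Q^c \cap \{B^1[T^1_{k-1}, T^1_k] \cap B^2[0, T^2_k] \neq \emptyset\}$, the random walks satisfy
\begin{equation*}
\text{dist}\bigl(\overline{S}^1[\tau^1(2^{k-1}-2^{\frac{31k}{60}}), \tau^1(2^k+2^{\frac{31k}{60}})],\, \overline{S}^2[0, \tau^2(2^k+2^{\frac{31k}{60}})]\bigr) \le 2^{\frac{31k}{60}+1},
\end{equation*}
mirroring the Brownian close-passage event $D_k$ in Lemma \ref{step1}. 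Since $2^{\frac{31k}{60}+1} = 2\,(2^k)^{1-\frac{29}{60}}$, Proposition \ref{hittable for srw} applied with $\epsilon = \frac{29}{60}$ and $b = 2$ produces a random-walk hittable event $H^\sharp_k = \{Z^\sharp_{2^k}(\epsilon, b) \le 2^{-\delta k}\}$ with $P((H^\sharp_k)^c) \le c\, 2^{-6k}$, so the contribution of $(H^\sharp_k)^c$ is absorbed into the previous bound.

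On what remains, I apply the strong Markov property of the random walks at $\tau^i_{k+2}$ to extract a factor $c\, 2^{-(N-k-2)\xi}$ for the non-intersection of $\overline{S}^1, \overline{S}^2$ after scale $k+2$, via Proposition \ref{intersection exp rw}. The remaining pre-$\tau_{k+2}$ probability decomposes into (i) a factor $c\, 2^{-(k-1-\frac{m}{3})\xi}$ from Proposition \ref{intersection exp rw}, bounding non-intersection of $\overline{S}^1, \overline{S}^2$ from $w^i_{m/3}$ up to scale $k-1$, and (ii) a factor $2^{-\delta k}$ coming, on $H^\sharp_k$, from the very definition of $Z^\sharp$: the conditional probability that $\overline{S}^1$ reaches $\partial {\cal B}_{k+1}$ without hitting $\overline{S}^2[0, \tau^2_{k+1}]$, given it enters the $2^{\frac{31k}{60}+1}$-neighborhood of this set, is at most $2^{-\delta k}$. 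Multiplying the three factors yields $c\, 2^{-(N-\frac{m}{3})\xi}\, 2^{-\delta k}$, as desired. The only genuine subtlety, exactly as in Lemma \ref{step1}, is the bookkeeping needed to align the $\tau^i$-windows created by the Skorohod error with the scales in which Proposition \ref{hittable for srw} is applied; this is routine given $k \ge \frac{21m}{60}$.
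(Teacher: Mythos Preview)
Your proof is correct and follows essentially the same approach as the paper's sketch: mirror Lemma~\ref{step1} by using the Skorohod coupling to transfer the Brownian close-passage event to a random-walk close-passage, then apply Proposition~\ref{hittable for srw} to produce the $2^{-\delta k}$ factor. The only minor variant is that you extract the factor $2^{-(N-k)\xi}$ from the random-walk non-intersection contained in $R_{\frac{m}{3},N}$ via the strong Markov property at $\tau^{i}_{k+2}$, whereas the paper extracts it from the Brownian non-intersection event $B^{1}[T^{1}_{k},T^{1}_{N}]\cap B^{2}[T^{2}_{k},T^{2}_{N}]=\emptyset$ already present in the hypothesis; both routes are valid (and in fact your use of $\Psi$ is not strictly necessary, since in the Skorohod coupling the Brownian time $T_{\tau^{i}_{k+2}}$ automatically dominates $T^{i}_{k+1}$, making $Q$ measurable in the relevant past).
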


\begin{proof}
Since the idea is quite similar as in the proof of Lemma \ref{step1}, we will just sketch the proof. By the strong Markov property, the probability in the left hand side of \eqref{ineqsub1-1} can be bounded above by
\begin{equation*}
c 2^{-(N-k) \xi } P^{w^{1}_{m/3},w^{2}_{m/3}}_{1,2}(  B^{1}[T^{1}_{k-1}, T^{1}_{k}] \cap B^{2}[0,T^{2}_{k}] \neq \emptyset , R_{\frac{m}{3} , k+1} ).
\end{equation*}
By Proposition \ref{skorohod}, if $B^{1}[T^{1}_{k-1}, T^{1}_{k}] \cap B^{2}[0,T^{2}_{k}] \neq \emptyset$, then $\overline{S}^{1}$ gets close to $\overline{S}^{2}[0 , \tau ^{2}_{k}]$ during $[\tau ^{1}_{k-1} , \tau^{1}_{k}]$ with probability at least $1- c\exp ( -2^{\delta k} )$, for some $\delta > 0$ and $c< \infty$. By Proposition \ref{hittable for srw}, once $\overline{S}^{1}$ gets close to $\overline{S}^{2}[0 , \tau ^{2}_{k}]$ during $[\tau ^{1}_{k-1} , \tau^{1}_{k}]$, then $\overline{S}^{1}$ intersects $\overline{S}^{2}[0 , \tau ^{2}_{k+1}]$ until $\tau^{1}_{k+1}$ with probability at least $1-2^{-\delta k}$. Hence by using the strong Markov property, the lemma can be proved. 

\end{proof}

\begin{lem}\label{sub1-2}
There exist $\delta > 0$ and $ c < \infty $ such that 
\begin{equation}\label{ineqsub1-2}
P^{w^{1}_{m/3},w^{2}_{m/3}}_{1,2}( B^{1}[T ^{1}_{k}, T^{1}_{N}] \cap B^{2}[T ^{2}_{k}, T^{2}_{N}] = \emptyset , B^{1}[T^{1}_{k-1}, T^{1}_{k}] \cap \text{PATH}^{2}_{f} \neq \emptyset , R_{\frac{m}{3} , N} ) \le c 2^{ -(N- \frac{m}{3}) \xi } 2^{-\delta k} .
\end{equation}
\end{lem}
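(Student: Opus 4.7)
The strategy mirrors that of Lemma \ref{sub1-1}, which handled the analogous event with $B^2[0,T^2_k]$ in place of $\text{PATH}^2_f$; the key new feature here is that $\text{PATH}^2_f$ is concentrated at scale $2^{m/3}$, much smaller than $2^k$, so the event $B^1[T^1_{k-1}, T^1_k] \cap \text{PATH}^2_f \neq \emptyset$ forces $B^1$ to return to a small ball about the origin. I would first apply the strong Markov property at $T^i_k$ for the Brownian motions together with Proposition \ref{intersection exp bm} to peel off the factor $c\, 2^{-(N-k)\xi}$ from the non-intersection of $B^1, B^2$ beyond scale $2^k$. Since $R_{m/3, N} \subset R_{m/3, k+1}$, this reduces matters to showing
\begin{equation*}
P\bigl(B^1[T^1_{k-1}, T^1_k] \cap \text{PATH}^2_f \neq \emptyset,\, R_{m/3, k+1}\bigr) \le c\, 2^{-(k-m/3)\xi}\, 2^{-\delta k}.
\end{equation*}

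Next I would introduce the Skorohod error event $Q = Q^1 \cup Q^2$ defined as in the proof of Lemma \ref{step1}, with truncation $2^{31k/60}$ up to time $T^i_{k+1}$; by Proposition \ref{skorohod}, $P(Q) \le c \exp(-2^{\delta k})$, which is absorbed into the error term. On $Q^c$, since $\text{PATH}^2_f \subset {\cal B}(2^{m/3+1})$, if $B^1$ enters $\text{PATH}^2_f$ during $[T^1_{k-1}, T^1_k]$ then $\overline{S}^1$ must enter the ball ${\cal B}(2^{m/3+1} + 2^{31k/60})$ during $[\tau^1_{k-1}, \tau^1_k]$, and in particular comes within distance $2^{11m/60} + 2^{31k/60}$ of the random walk path $S^2[0,\tau^2_{m/3}] \cup \gamma^2$.

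I would then apply Proposition \ref{hittable for srw} at an appropriate scale $2^j$, chosen so that the closeness distance $2^{11m/60} + 2^{31k/60}$ fits below the hittability threshold $b\, 2^{j(1-\epsilon)}$; for $k \le N-3$ one can take $j$ of order $k+1$, so the hittable-path conclusion takes place before $\tau^1_{k+1}$. This yields that, conditional on $\overline{S}^1$ entering the small ball, the walk intersects the fixed path $S^2[0,\tau^2_{m/3}] \cup \gamma^2$ before $\tau^1_{k+1}$ except on an event of conditional probability at most $2^{-\delta k}$, and any such intersection violates $R_{m/3, k+1}$. A strong Markov argument for $\overline{S}^1, \overline{S}^2$ at the stopping times $\tau^i_{k-1}$, combined with the random walk non-intersection exponent (Proposition \ref{intersection exp rw}), contributes the remaining factor $c\, 2^{-(k-m/3)\xi}$ from $R_{m/3, k-1}$.

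The main obstacle I anticipate is the two-dimensional case: the return probability of $B^1$ from scale $2^{k-1}$ to the small ball is only logarithmically small in $d=2$, so the hittability argument alone does not suffice. As in the proof of Lemma \ref{4step}, one must combine it with a Beurling-type bound (Proposition \ref{beurling estimate}), conditioning at the first entry time of $\overline{S}^1$ into a ball of radius slightly larger than $2^{m/3}$, to recover the $2^{-\delta k}$ factor from the subsequent passage out to $\partial {\cal B}_{k+1}$. A secondary nuisance is the scale alignment between $2^{11m/60}$, $2^{31k/60}$, and the hittability radius when $k$ is close to $N-3$, but this can be handled by the same case-split used at the end of the proof of Lemma \ref{4step}.
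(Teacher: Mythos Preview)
Your framework is sound and matches the paper for $d=2$: both transfer to $\overline{S}^1$ via Skorohod and then invoke Beurling (Proposition~\ref{beurling estimate}) once $\overline{S}^1$ has been forced into a ball of radius $2^{(\frac{m}{3}\vee \frac{31k}{60})+1}$, exploiting that the concatenated path $\gamma^2 \cup S^2[0,\tau^2_{m/3}] \cup \overline{S}^2[0,\tau^2_{k}]$ reaches $\partial {\cal B}_k$. The strong Markov split giving the factors $2^{-(N-k)\xi}$ and $2^{-(k-m/3)\xi}$ is also exactly what the paper does.

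For $d=3$, however, your route diverges from the paper's and the specific hittability step you propose is misconfigured. You want to apply Proposition~\ref{hittable for srw} at scale $2^{j}$ with $j$ of order $k+1$ to the target $S^2[0,\tau^2_{m/3}] \cup \gamma^2$; but that set lives entirely inside ${\cal B}_{m/3}$, so it does not reach scale $2^{j+1}$ and the proposition cannot be invoked at that scale. If instead you work at the natural scale $j \approx m/3$ (which is essentially the event $H_m$ already built into the setup), the conclusion is only $2^{-\delta m}$, and for $k$ ranging up to $N-3$ this does not yield $2^{-\delta k}$. Your deferral to ``the same case-split used at the end of the proof of Lemma~\ref{4step}'' is the right instinct, but note that this case-split is \emph{itself} the Brownian return-probability argument, not a rescue of hittability.

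The paper's $d=3$ argument bypasses Skorohod and hittability entirely here: since $\text{PATH}^2_f \subset {\cal B}(2^{m/3+1})$, the event $B^1[T^1_{k-1},T^1_k]\cap \text{PATH}^2_f \neq \emptyset$ already forces $B^1$ itself to return from scale $2^{k-1}$ to scale $2^{m/3}$, which in $\mathbb{R}^3$ has probability at most $c\,2^{-(k-m/3)}$. Because $k \ge \tfrac{21m}{60}$ implies $k - \tfrac{m}{3} \ge \tfrac{k}{21}$, this is already $\le c\,2^{-\delta k}$, and combining with $R_{m/3,k-2}$ and the outer non-intersection gives the claim directly. In short: for $d=3$ you do not need to pass to $\overline{S}^1$ at all; the Brownian return estimate does the work.
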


\begin{proof}
Similar ideas as in the proof of Lemma \ref{4step} works here. So we just state the idea of the proof. 

First let $d=3$. The probability that $B^{1}[T ^{1}_{k}, T^{1}_{N}]$ does not intersect $B^{2}[T ^{2}_{k}, T^{2}_{N}]$ is bounded above by $c 2^{-(N-k) \xi}$. Assume $B^{1}[T^{1}_{k-1}, T^{1}_{k}] \cap \text{PATH}^{2}_{f} \neq \emptyset$, then $B^{1}$ enters in ${\cal B} (2^{\frac{m}{3}})$ during $[ T^{1}_{k-1}, T^{1}_{k}]$. The probability that such an entrance occurs is at most $c 2^{-(k-\frac{m}{3})}$. Finally, using $P^{w^{1}_{m/3},w^{2}_{m/3}}_{1,2} ( R_{\frac{m}{3} , k-2} ) \le c 2^{-(k-\frac{m}{3})\xi }$ and the strong Markov property, the lemma is finished for $d=3$.

Next let $d=2$. In this case, if $B^{1}$ enters in ${\cal B} (2^{\frac{m}{3}})$ during $[ T^{1}_{k-1}, T^{1}_{k}]$, then $\overline{S}^{1}$ enters ${\cal B} (2^{(\frac{m}{3} \vee \frac{31N}{60})+1 })$ during $[\tau ^{1}_{k-1} , \tau^{1}_{k}]$ with probability at least $1- \exp ( - 2^{ \delta k} )$. Once $\overline{S}^{1}[\tau ^{1}_{k-1} , \tau^{1}_{k}] \cap {\cal B} (2^{(\frac{m}{3} \vee \frac{31N}{60})+1 }) \neq \emptyset$, the probability that $\overline{S}^{1}$ intersects $\overline{S}^{2}[0, \tau^{2}_{k}]$ until $\tau ^{1}_{k}$ is at least $1- 2^{-\delta k}$. Therefore, by using the strong Markov property, we finish the proof of the lemma for $d=2$.

\end{proof}

\subsubsection{Bounds for $ \frac{m}{3} + 1  \le k \le \frac{21m}{60} $}\label{sub2-1}

We can prove the following lemma by a same idea of Lemma \ref{sub1-1}. So we omit its proof.
\begin{lem}\label{sub2-1}
There exist $\delta > 0$ and $ c < \infty $ such that 
\begin{equation}\label{ineqsub2-1}
P^{w^{1}_{m/3},w^{2}_{m/3}}_{1,2}( B^{1}[T ^{1}_{k}, T^{1}_{N}] \cap B^{2}[T ^{2}_{k}, T^{2}_{N}] = \emptyset , B^{1}[T^{1}_{k-1}, T^{1}_{k}] \cap B^{2}[0,T^{2}_{k}] \neq \emptyset , R_{\frac{m}{3} , N} ) \le c 2^{ -(N- \frac{m}{3}) \xi } 2^{-\delta k} .
\end{equation}
\end{lem}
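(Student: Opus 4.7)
The plan is to imitate the proof of Lemma \ref{sub1-1} essentially verbatim, exploiting the fact that the event and the underlying mechanism are identical and that only the range of $k$ has changed. In the present range $\frac{m}{3}+1 \le k \le \frac{21m}{60}$ the fattened paths $\text{PATH}^{i}_{f}$ do not enter the geometric picture near scale $k$, so no return-to-inner-ball trick as in Lemmas \ref{4step} and \ref{5step} is needed; the bound comes purely from turning Brownian near-intersections into random walk intersections via the Skorohod coupling and the hittable-set property.

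First, I would apply the strong Markov property for $B^{1},B^{2}$ at the hitting times $T^{1}_{k+1},T^{2}_{k+1}$ and invoke Proposition \ref{intersection exp bm} to factor out $c 2^{-(N-k-1)\xi}$; this reduces the task to estimating
\[ P^{w^{1}_{m/3},w^{2}_{m/3}}_{1,2}\bigl(R_{\frac{m}{3},k+1},\, B^{1}[T^{1}_{k-1},T^{1}_{k}]\cap B^{2}[0,T^{2}_{k}]\neq\emptyset\bigr). \]
Next, I would introduce the bad event $Q$ of Lemma \ref{step1} (displacement $\ge 2^{31k/60}$), which by Proposition \ref{skorohod} has probability at most $c\exp(-2^{\delta k})$. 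On $Q^{c}$, a Brownian intersection as above forces the random walks to be close: there exists $j \in [\tau^{1}_{k-1},\tau^{1}_{k}]$ with $\text{dist}(\overline{S}^{1}(j),\overline{S}^{2}[0,\tau^{2}_{k}])\le c\, 2^{31k/60}$. Proposition \ref{hittable for srw}, applied conditionally on $\overline{S}^{2}[0,\tau^{2}_{k+1}]$, then says that outside an event of probability $\le c 2^{-6k}$, the conditional probability that $\overline{S}^{1}$ reaches $\partial\mathcal{B}_{k+1}$ from $\overline{S}^{1}(j)$ without meeting $\overline{S}^{2}[0,\tau^{2}_{k+1}]$ is at most $2^{-\delta k}$.

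Finally, because $R_{\frac{m}{3},k+1}\subset R_{\frac{m}{3},N}$ precisely forbids such a hit, one more application of the strong Markov property at scale $k-2$ combined with the bound $P^{w^{1}_{m/3},w^{2}_{m/3}}_{1,2}(R_{\frac{m}{3},k-2})\le c\, 2^{-(k-\frac{m}{3})\xi}$ from Proposition \ref{intersection exp rw} produces the final estimate $c\, 2^{-(N-\frac{m}{3})\xi}\, 2^{-\delta k}$. The main delicate point, though routine in this range, will be making sure the Skorohod error scale $2^{31k/60}$ is consistently dominated by the geometric buffer between scales $k-1,k,k+1$ so that the strong Markov decompositions align on both sides of the coupling; since $31/60<1$ and we have a full scale of slack, this causes no real trouble, which is why the author is content to omit the proof.
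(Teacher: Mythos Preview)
Your proposal is correct and takes essentially the same approach as the paper, which explicitly omits the proof and refers to Lemma \ref{sub1-1}; what you have written is precisely a fleshed-out version of that sketch (Skorohod coupling to transfer the Brownian near-hit to a random-walk near-hit, then Proposition \ref{hittable for srw} to force an actual intersection, contradicting $R_{\frac{m}{3},k+1}$). One harmless slip: the containment goes the other way, $R_{\frac{m}{3},N}\subset R_{\frac{m}{3},k+1}$, and that is the direction you actually need in your first step to drop from scale $N$ to scale $k+1$.
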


For $ \frac{m}{3} + 1  \le k \le \frac{21m}{60}$, we have only to show the following lemma.

\begin{lem}\label{sub2-2}
There exist $\delta > 0$ and $ c < \infty $ such that 
\begin{equation}\label{ineqsub2-2}
P^{w^{1}_{m/3},w^{2}_{m/3}}_{1,2}( B^{1}[T ^{1}_{k}, T^{1}_{N}] \cap B^{2}[T ^{2}_{k}, T^{2}_{N}] = \emptyset , B^{1}[T^{1}_{k-1}, T^{1}_{k}] \cap \text{PATH}^{2}_{f} \neq \emptyset , R_{\frac{m}{3} , N} ) \le c 2^{ -(N- \frac{m}{3}) \xi } 2^{-\delta k} .
\end{equation}
\end{lem}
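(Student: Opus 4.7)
The plan adapts the approach of Lemma \ref{step2} in reverse, exploiting the arithmetic $\frac{31k}{60} \le \frac{11m}{60}$ for $k \le \frac{21m}{60}$, which ensures the Skorohod coupling error is bounded by the fattening radius of $\text{PATH}^2_f$. First, as in the proof of Lemma \ref{sub1-1}, applying the strong Markov property together with Proposition \ref{intersection exp bm} bounds the left-hand side of \eqref{ineqsub2-2} by $c\,2^{-(N-k)\xi}$ times
\[
P^{w^1_{m/3}, w^2_{m/3}}_{1,2}\bigl(B^1[T^1_{k-1}, T^1_k] \cap \text{PATH}^2_f \neq \emptyset,\ R_{m/3, k+1}\bigr),
\]
so it suffices to bound this residual probability by $c\,2^{-(k-m/3)\xi}2^{-\delta k}$.

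Next, as in the proof of Lemma \ref{step1}, I introduce the measurability event $\Psi$ of Proposition \ref{measurability} and the Skorohod-error event $Q = Q^1 \cup Q^2$ with threshold $2^{31k/60}$. The contributions from $\Psi^c$ and $Q$ are at most $c\,2^{-(N-k-2)\xi}\exp(-2^{\delta k})$ by Propositions \ref{skorohod} and \ref{measurability}. On $\Psi \cap Q^c$, if $B^1(t) \in \text{PATH}^2_f$ for some $t \in [T^1_{k-1}, T^1_k]$, the Skorohod coupling produces a time $dt^*$ satisfying $\tau^1(2^{k-1} - 2^{31k/60}) \le dt^* \le \tau^1(2^k + 2^{31k/60})$ and
\[
\text{dist}\bigl(\overline{S}^1(dt^*),\ S^2[0, \tau^2_{m/3}] \cup \gamma^2\bigr) \le 2^{11m/60} + 2^{31k/60} \le 2^{11m/60+1}.
\]

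I then argue by hittability: conditionally on $S^2[0, \tau^2_{m/3}]$ and on $\overline{S}^1$ being at $\overline{S}^1(dt^*)$, the probability that $\overline{S}^1$ continues up to $\tau^1_{k+1}$ without hitting $S^2[0, \tau^2_{m/3}] \cup \gamma^2$ is at most $2^{-\delta k}$, contradicting $R_{m/3, k+1}$. Combined with the strong Markov property at $dt^*$ and Proposition \ref{intersection exp rw} applied to the initial segment (giving $P(R_{m/3, k-2}) \le c\,2^{-(k-m/3)\xi}$), this yields the desired bound. The main obstacle is this hittability step: the event $H_m$ addresses only the inner part $S^2[0, \tau^2(2^{m/3} - 2^{2m/9})]$ and requires the starting point to lie within $2^{11m/60}$ of it, whereas $\overline{S}^1(dt^*)$ lies outside ${\cal B}_{m/3}$ (since $k \ge m/3+1$), so by $G_m$ the nearby portion of $S^2$ is the outer ring rather than the inner part. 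Handling this will require an additional geometric step—a Beurling-type estimate (Proposition \ref{beurling estimate}) in $d = 2$ exploiting the connectedness of $S^2[0, \tau^2_{m/3}] \cup \gamma^2 \cup \{0\}$, and a direct hittability computation using Proposition \ref{hittable for srw} at scale $2^{k+1}$ (after extending $S^2$ past $\tau^2_{m/3}$) in $d = 3$—with care needed to ensure that avoidance of the truncated path $S^2[0,\tau^2_{m/3}]$, rather than an extended path, is correctly upper-bounded.
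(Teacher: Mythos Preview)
Your reduction via the strong Markov property, the events $\Psi,Q$, and the coupling arithmetic $\tfrac{31k}{60}\le\tfrac{11m}{60}$ are exactly what the paper does, and your $d=2$ argument via the Beurling estimate (applied to the connected path $\overline{S}^{2}[0,\tau^2_{k+1}]\cup S^{2}[0,\tau^2_{m/3}]\cup\gamma^2$) matches the paper's proof.

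For $d=3$ the paper takes a different, more direct route that uses the events $F_m,G_m,H_m$ which were introduced precisely for this purpose. It decomposes $2\text{PATH}^2_f$ into three pieces: $U_1$ (the $2^{11m/60+1}$-neighborhood of $\gamma^2$), $U_2$ (that of the inner segment $S^2[0,\tau^2(2^{m/3}-2^{2m/9})]$), and $U_3$ (that of the outer ring $S^2[\tau^2(2^{m/3}-2^{2m/9}),\tau^2_{m/3}]$). For $U_1$ a simple return-to-small-ball estimate suffices since $\gamma^2\subset{\cal B}(2^{L})$ with $L\le m/10$. For $U_3$ one uses $G_m$ to trap the outer ring in ${\cal B}(S^2(\tau^2(2^{m/3}-2^{2m/9})),2^{m/4})$ and then $F_m$ to see that $\overline S^1$ must cross a gap of order $2^{0.99m/3}$ to reach it. For $U_2$ the point $\overline S^1(\sigma^\sharp)$ is automatically in ${\cal B}_{m/3}$ and within $2^{11m/60+1}$ of the inner segment, so $H_m$ applies directly and gives the bound $Z^2_m\le 2^{-\delta m}$.

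Two remarks on your $d=3$ sketch. First, your diagnosis of the obstacle is slightly off: it is not that $\overline S^1(dt^*)$ must lie outside ${\cal B}_{m/3}$ (it need not---the walk can re-enter after $\tau^1(2^{k-1}-2^{31k/60})$), but simply that $H_m$ only controls points close to the inner segment, not to $\gamma^2$ or the outer ring. Second, your stated worry about ``truncated versus extended'' is unnecessary: $R_{m/3,k+1}$ already requires $\overline S^1[0,\tau^1_{k+1}]\cap\overline S^2[0,\tau^2_{k+1}]=\emptyset$, so a hittability bound for the full random-walk path $S^2\cup\overline S^2$ up to $\partial{\cal B}_{k+1}$ does contradict $R_{m/3,k+1}$. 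Your alternative would work once you treat the $\gamma^2$ portion separately (it is not part of any random-walk path), but the paper's decomposition is cleaner and is the payoff for having set up $F_m,G_m,H_m$.
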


\begin{proof}
We will give a full proof for this lemma. Recall the definition of $\text{PATH}^{2}_{f}$ in \eqref{fatpath}. Let
\begin{equation*}
2\text{PATH}^{2}_{f}= \{ z\in \mathbb{R}^{d} : \text{dist}( z, S^{2}[0,\tau^{2}_{ \frac{m}{3} }] \cup \gamma^{2} ) \le 2^{\frac{11m}{60}+1} \}
\end{equation*}
be the set obtained by letting $\text{PATH}^{2}_{f}$ be fattened twice. Let 
\begin{equation*}
Q^{i}= \{ \sup _{0 \le t \le T^{i}_{k+1} } | \overline{S}^{i}(dt) - B^{i}(t) | \ge 2^{ \frac{31k}{60}} \}
\end{equation*}
and $Q= Q^{1} \cup Q^{2}$. By Proposition \ref{skorohod}, we see that 
\begin{equation*}
P^{w^{1}_{m/3},w^{2}_{m/3}}_{1,2} ( Q ) \le c \exp ( - 2^{\delta k} ),
\end{equation*}
for some $\delta > 0$ and $c< \infty$. Let $\Psi ^{1}(2^{k+1})$, $\Psi^{2}(2^{k+1})$ be the event in Proposition \ref{measurability} for $(B^{1},\overline{S}^{1})$, $(B^{2},\overline{S}^{2})$, respectively. Let $\Psi = \Psi ^{1}(2^{k+1}) \cap \Psi ^{2}(2^{k+1})$. By Proposition \ref{measurability}, 
\begin{equation*}
P^{w^{1}_{m/3},w^{2}_{m/3}}_{1,2} ( \Psi ^{c} ) \le c \exp ( - 2^{\delta k} ),
\end{equation*}
for some $\delta > 0$ and $c< \infty$. Recall that on the event $\Psi$, 
\begin{equation*}
\bigcup_{i=1,2} \{ B^{i}(t) : t \le T^{i}_{k+1} \vee \tau^{i}_{k+1} \} \cup \{ \overline{S} ^{i}(dt) : t \le T^{i}_{k+1} \vee \tau^{i}_{k+1} \}
\end{equation*}
and
\begin{equation*}
\bigcup_{i=1,2} \{ B^{i}(t) : t \ge T^{i}_{k+2} \}
\end{equation*}
are conditionally independent given $B^{1}(T^{1}_{k+2})$ and $B^{2}(T^{2}_{k+2})$. Therefore, 
\begin{align*}
&P^{w^{1}_{m/3},w^{2}_{m/3}}_{1,2} ( B^{1}[T ^{1}_{k}, T^{1}_{N}] \cap B^{2}[T ^{2}_{k}, T^{2}_{N}] = \emptyset , B^{1}[T^{1}_{k-1}, T^{1}_{k}] \cap \text{PATH}^{2}_{f} \neq \emptyset , R_{\frac{m}{3} , N} , Q^{c} , \Psi ) \\
&\le c 2^{-(N-k)\xi } P^{w^{1}_{m/3},w^{2}_{m/3}}_{1,2} ( B^{1}[T^{1}_{k-1}, T^{1}_{k}] \cap \text{PATH}^{2}_{f} \neq \emptyset, R_{\frac{m}{3} , k+1}, Q^{c} , \Psi ).
\end{align*}
From now we will estimate for $P^{w^{1}_{m/3},w^{2}_{m/3}}_{1,2} ( B^{1}[T^{1}_{k-1}, T^{1}_{k}] \cap \text{PATH}^{2}_{f} \neq \emptyset, R_{\frac{m}{3} , k+1}, Q^{c}  )$. 

First, let $d=2$. If $Q^{c}$ holds, then it is easy to see that 
\begin{equation}\label{close}
\frac{\tau ^{1}(2^{k-1} - 2^{\frac{31k}{60}})}{2} \le T^{1}_{k-1} \le T^{1}_{k} \le \frac{\tau ^{1}(2^{k} + 2^{\frac{31k}{60}})}{2}.
\end{equation}
Therefore, on the event $Q^{c} \cap \{ B^{1}[T^{1}_{k-1}, T^{1}_{k}] \cap \text{PATH}^{2}_{f} \neq \emptyset \}$, we have
\begin{equation*}
\text{dist}( \overline{S}^{1}(2t) , S^{2}[0,\tau^{2}_{ \frac{m}{3} }] \cup \gamma^{2} ) \le 2^{\frac{11m}{60}}+ 2^{\frac{31k}{60}} \le 2^{\frac{11m}{60}+1},
\end{equation*}
for some $t \in [\frac{\tau ^{1}(2^{k-1} - 2^{\frac{31k}{60}})}{2} , \frac{\tau ^{1}(2^{k} + 2^{\frac{31k}{60}})}{2}]$. Here the last inequality comes from that $ k \le \frac{21m}{60}$. Hence,
\begin{align}\label{srwclose}
&P^{w^{1}_{m/3},w^{2}_{m/3}}_{1,2} ( B^{1}[T^{1}_{k-1}, T^{1}_{k}] \cap \text{PATH}^{2}_{f} \neq \emptyset, R_{\frac{m}{3} , k+1}, Q^{c}  ) \notag \\
& \le P^{w^{1}_{m/3},w^{2}_{m/3}}_{1,2} ( \overline{S}^{1}[\tau ^{1}(2^{k-1} - 2^{\frac{31k}{60}}), \tau ^{1}(2^{k} + 2^{\frac{31k}{60}}) ] \cap 2\text{PATH}^{2}_{f} \neq \emptyset , R_{\frac{m}{3} , k+1} ).
\end{align}
Let
\begin{equation*}
\sigma = \inf \{ j \ge \tau ^{1}(2^{k-1} - 2^{\frac{31k}{60}}) : \overline{S}^{1}(j) \in 2\text{PATH}^{2}_{f} \}.
\end{equation*}
Then the right hand side in \eqref{srwclose} is bounded above by
\begin{align}\label{stmarkovs1}
E^{w^{2}_{m/3}}_{2} \big( E^{ w^{1}_{m/3}}_{1} \big( & {\bf 1} \{ \overline{S}^{1}[0,\tau ^{1}_{k-2}] \cap \overline{S}^{2}_{k-2} =\emptyset , \sigma \le \tau ^{1}(2^{k} + 2^{\frac{31k}{60}}) \} \notag \\
& \times P^{ \overline{S}^{1}(\sigma ) }( \overline{S}^{1}[0,\tau^{1}_{k+1}] \cap (\overline{S}^{2} [0, \tau^{2}_{k+1}] \cup S[0,\tau^{2}_{\frac{m}{3}}] \cup \gamma^{2}) =\emptyset ) \big) \big).
\end{align}
Since $\overline{S}^{2} [0, \tau^{2}_{k+1}] \cup S[0,\tau^{2}_{\frac{m}{3}}] \cup \gamma^{2}$ is a path from the origin to $\partial {\cal B}(2^{k+1})$, 
\begin{equation*}
\overline{S}^{1}(\sigma ) \in {\cal B}( 2^{\frac{m}{3}}+ 2^{\frac{11m}{60}+1})
\end{equation*}
and
\begin{equation*}
\text{dist} ( \overline{S}^{1}(\sigma ) , S^{2}[0,\tau^{2}_{ \frac{m}{3} }] \cup \gamma^{2} ) \le 2^{\frac{11m}{60}+1},
\end{equation*}
by using Proposition \ref{beurling estimate}, we see that
\begin{equation*}
P^{ \overline{S}^{1}(\sigma ) }( \overline{S}^{1}[0,\tau^{1}_{k+1}] \cap (\overline{S}^{2} [0, \tau^{2}_{k+1}] \cup S[0,\tau^{2}_{\frac{m}{3}}] \cup \gamma^{2}) =\emptyset ) \le c 2^{-\delta k},
\end{equation*}
for some $\delta > 0$ and $c < \infty$. Hence \eqref{stmarkovs1} is bounded above by 
\begin{equation*}
c 2^{-\delta k} P^{w^{1}_{m/3},w^{2}_{m/3}}_{1,2} ( \overline{S}^{1}[0,\tau ^{1}_{k-2}] \cap \overline{S}^{2}_{k-2} =\emptyset ) \le c 2^{-\delta k} 2^{-(k-\frac{m}{3})\xi },
\end{equation*}
and the proof for $d=2$ is finished.

Next we consider for $d=3$. Recall the events $F_{m}$, $G_{m}$ and $H_{m}$ in \eqref{stmarkov}. By \eqref{srwclose}, we need to estimate 
\begin{equation*}
P^{w^{1}_{m/3},w^{2}_{m/3}}_{1,2} ( \overline{S}^{1}[\tau ^{1}(2^{k-1} - 2^{\frac{31k}{60}}), \tau ^{1}(2^{k} + 2^{\frac{31k}{60}}) ] \cap 2\text{PATH}^{2}_{f} \neq \emptyset , R_{\frac{m}{3} , k+1} )
\end{equation*}
on the event $F_{m} \cap G_{m} \cap H_{m}$. For this end, we decompose $2\text{PATH}^{2}_{f}$ into three parts $U_{1},U_{2}$ and $U_{3}$ as follows.
\begin{align*}
&U_{1}= \{ z\in \mathbb{R}^{d} : \text{dist}( z, \gamma^{2} ) \le 2^{\frac{11m}{60}+1} \} \\
&U_{2}= \{ z\in \mathbb{R}^{d} : \text{dist}( z, S^{2}[0,\tau ^{2} (2^{\frac{m}{3}}-2^{\frac{2m}{9}})]  ) \le 2^{\frac{11m}{60}+1} \} \\
&U_{3}= \{ z\in \mathbb{R}^{d} : \text{dist}( z, S^{2}[\tau ^{2} (2^{\frac{m}{3}}-2^{\frac{2m}{9}}),\tau^{2}_{ \frac{m}{3} }]  ) \le 2^{\frac{11m}{60}+1} \}
\end{align*}
Since $\gamma^{2} \in {\cal B} (2^{L})$ and $L \le \frac{m}{10}$, it is easy to see that
\begin{equation*}
P^{w^{1}_{m/3},w^{2}_{m/3}}_{1,2} ( \overline{S}^{1}[\tau ^{1}(2^{k-1} - 2^{\frac{31k}{60}}), \tau ^{1}(2^{k} + 2^{\frac{31k}{60}}) ] \cap U_{1} \neq \emptyset , R_{\frac{m}{3} , k+1} ) \le c 2^{- \delta k} 2^{ -(k-\frac{m}{3}) \xi },
\end{equation*}
for some $\delta > 0$ and $c< \infty$. Since $S^{2}[\tau ^{2} (2^{\frac{m}{3}}-2^{\frac{2m}{9}}),\tau^{2}_{ \frac{m}{3} }] \subset {\cal B} ( S^{2}(\tau ^{2} (2^{\frac{m}{3}}-2^{\frac{2m}{9}})), 2^{\frac{m}{4}})$ on the event $G_{m}$, we see that $U_{3} \subset {\cal B} ( S^{2}(\tau ^{2} (2^{\frac{m}{3}}-2^{\frac{2m}{9}})), 2^{\frac{m}{4}+1})$. Therefore $\overline{S}^{1}[\tau ^{1}((2^{k-1} - 2^{\frac{31k}{60}}) \vee 2^{\frac{m}{3}}), \tau ^{1}(2^{k} + 2^{\frac{31k}{60}}) ] \cap U_{3} \neq \emptyset$ implies that
\begin{equation}\label{s1enterball}
\overline{S}^{1}[\tau ^{1}((2^{k-1} - 2^{\frac{31k}{60}}) \vee 2^{\frac{m}{3}}), \tau ^{1}(2^{k} + 2^{\frac{31k}{60}}) ] \cap {\cal B} ( S^{2}(\tau ^{2} (2^{\frac{m}{3}}-2^{\frac{2m}{9}})), 2^{\frac{m}{4}+1}).
\end{equation}
However,
\begin{equation*}
|\overline{S}^{1}(\tau ^{1}((2^{k-1} - 2^{\frac{31k}{60}}) \vee 2^{\frac{m}{3}}) - S^{2}(\tau ^{2} (2^{\frac{m}{3}}-2^{\frac{2m}{9}}))| \ge 2^{\frac{0.99m}{3}},
\end{equation*}
on the event $F_{m}$. So the probability of \eqref{s1enterball} is bounded above by $c 2^{-\frac{m}{24}}$ for some $c < \infty$. Using the strong Markov property, 
\begin{equation*}
P^{w^{1}_{m/3},w^{2}_{m/3}}_{1,2} ( \overline{S}^{1}[\tau ^{1}((2^{k-1} - 2^{\frac{31k}{60}}) \vee 2^{\frac{m}{3}}), \tau ^{1}(2^{k} + 2^{\frac{31k}{60}}) ] \cap U_{3} \neq \emptyset , R_{\frac{m}{3} , k+1} ) \le c 2^{- \delta k} 2^{ -(k-\frac{m}{3}) \xi },
\end{equation*}
for some $\delta > 0$ and $c <\infty$. Finally we consider for $U_{2}$. Let
\begin{equation*}
\sigma^{\sharp}= \inf \{ j \ge \tau ^{1}(2^{k-1} - 2^{\frac{31k}{60}}) : \overline{S}^{1} (j) \in U_{2} \}.
\end{equation*}
Then by the strong Markov property,
\begin{align}\label{hmap}
&P^{w^{1}_{m/3},w^{2}_{m/3}}_{1,2} ( \overline{S}^{1}[\tau ^{1}(2^{k-1} - 2^{\frac{31k}{60}}), \tau ^{1}(2^{k} + 2^{\frac{31k}{60}}) ] \cap U_{2} \neq \emptyset , R_{\frac{m}{3} , k+1} ) \notag \\
&\le E^{w^{2}_{m/3}}_{2} \big( E^{w^{1}_{m/3}}_{1} \big( {\bf 1} \{ R_{\frac{m}{3} , k-2} , \sigma^{\sharp} \le \tau ^{1}(2^{k} + 2^{\frac{31k}{60}}) \} \notag \\
& \times P^{\overline{S}^{1}(\sigma^{\sharp})}_{1}( \overline{S}^{1}[0, \tau^{1}_{k+1}] \cap S^{2}[0,\tau^{2}_{\frac{m}{3}}] = \emptyset ) \big) \big).
\end{align}
Note that $P^{\overline{S}^{1}(\sigma^{\sharp})}_{1}( \overline{S}^{1}[0, \tau^{1}_{k+1}] \cap S^{2}[0,\tau^{2}_{\frac{m}{3}}] = \emptyset ) \le Z^{2}_{m}$. Hence on the event $H_{m}$, the right hand side of \eqref{hmap} can be bounded above by $c 2^{-\delta k}2^{-(k-\frac{m}{3})\xi}$ for some $\delta > 0$ and $c < \infty$, and the lemma is proved.

\end{proof}

\subsubsection{Bounds for $ k=N-2,N-1,$ and $k=N$}\label{sub3-1}

Again, we will only consider for $k=N$ as in Section \ref{sub3}. Other cases can be estimated by a similar argument given below.

\begin{lem}\label{ineqsub3-1}
There exist $\delta > 0$ and $ c < \infty $ such that 
\begin{equation}\label{ineqsub3-1}
P^{w^{1}_{m/3},w^{2}_{m/3}}_{1,2}(  B^{1}[T^{1}_{N-1}, T^{1}_{N}] \cap (B^{2}[0, T^{2}_{N}] \cup \text{PATH}^{2}_{f}) \neq \emptyset , R_{\frac{m}{3} , N} ) \le c 2^{ -(N- \frac{m}{3}) \xi } 2^{-\delta N} .
\end{equation}
\end{lem}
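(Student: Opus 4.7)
The plan is to mirror the proof of Lemma \ref{5step} and Lemma \ref{6step} (with its sub-lemmas \ref{6stepsub1} and \ref{6stepsub2}), but with the roles of Brownian motion and simple random walk reversed: here the \emph{Brownian} event is the one we want to estimate, while $R_{m/3,N}$ is the \emph{random walk} event that constrains probabilities. First I would split the event according to whether the intersection occurs with $B^{2}[0,T^{2}_{N}]$ or with $\text{PATH}^{2}_{f}$, and treat the two resulting terms separately.

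For the term with $\text{PATH}^{2}_{f}$, I would proceed as in Lemma \ref{5step}: introduce the bad coupling event $Q=Q^{1}\cup Q^{2}$ with $Q^{i}=\{\sup_{0\le t\le T^{i}_{N+1}}|B^{i}(t)-\overline{S}^{i}(dt)|\ge 2^{31N/60}\}$, whose probability is $\le c\exp(-2^{\delta N})$ by Proposition \ref{skorohod}. On $Q^{c}$, the inequality $dT^{1}(2^{N-1}-2^{31N/60})\le\tau^{1}_{N-1}$ holds, so $B^{1}[T^{1}_{N-1},T^{1}_{N}]\cap\text{PATH}^{2}_{f}\neq\emptyset$ forces $B^{1}[T^{1}_{N-2},\infty)$ to re-enter a ball of radius $2^{(m/3)\vee(31N/60)+1}$, which costs a factor $2^{-\delta N}$ (using Proposition \ref{beurling estimate} for $d=2$ and a direct transience estimate for $d=3$). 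The strong Markov property at the first such entrance, combined with the bound $P^{w^{1}_{m/3},w^{2}_{m/3}}_{1,2}(R_{m/3,N-2})\le c 2^{-(N-2-m/3)\xi}$, yields the desired estimate for this piece.

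For the term with $B^{2}[0,T^{2}_{N}]$, I would imitate Lemma \ref{6step} by splitting the Brownian trace $B^{2}[0,T^{2}_{N}]$ into the initial part $B^{2}[0,T^{2}(2^{N}-2^{2N/3})]$ and the boundary part $B^{2}[T^{2}(2^{N}-2^{2N/3}),T^{2}_{N}]$, and similarly for $B^{1}[T^{1}_{N-1},T^{1}_{N}]$. For the boundary-vs-boundary and boundary-vs-interior pieces, a geometric ball-containment argument (as in Lemma \ref{6stepsub1}) reduces matters to a Brownian motion hitting a small ball of radius $\sim 2^{3N/4}$ at distance $\sim 2^{N}$, which costs $2^{-N/4}$. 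For the interior-vs-interior piece, as in Lemma \ref{6stepsub2}, on $Q^{c}$ an intersection forces $B^{1}$ to come within $2^{31N/60+1}$ of $B^{2}[0,T^{2}(2^{N}-2^{2N/3}+2^{31N/60})]$, and Proposition \ref{hittable for bm} gives that (up to an event of probability $\le c2^{-6N}$) the non-intersection probability from such a close point is bounded by $2^{-\delta N}$; the strong Markov property then compounds this with $P^{w^{1}_{m/3},w^{2}_{m/3}}_{1,2}(R_{m/3,N-2})$ to finish.

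The main obstacle is the \emph{directionality of the coupling}: in Lemma \ref{5step} the random walk event was being estimated using Brownian hittability, but here we need the Brownian event estimated while controlling the random walk event $R_{m/3,N}$. The cleanest way through is to observe that $R_{m/3,N}$ is monotone in the sense that $R_{m/3,N}\subset R_{m/3,N-2}$, so the strong Markov property at time $T^{i}_{N-2}$ (combined with $Q^{c}$, which transfers $T$-stopping times to $\tau$-stopping times up to error $2^{31N/60}$ and Proposition \ref{measurability} to decouple the post-$T_{k+2}$ Brownian piece) lets one peel off the final scale and pay only the additional $2^{-\delta N}$ factor coming from the geometric cost of the re-entry or hittability event. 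Once that reduction is in place, the calculation follows the same template as Lemmas \ref{5step}--\ref{6stepsub2} with no essentially new estimates required.
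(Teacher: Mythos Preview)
Your overall skeleton matches the paper's: split off the $\text{PATH}^{2}_{f}$ piece, and for the $B^{2}$ piece separate the boundary layer near $\partial\mathcal{B}_{N}$ as in Lemma~\ref{6stepsub1} and Remark~\ref{rem1}. You also correctly identify that the coupling must be run in the opposite direction from Lemmas~\ref{5step}--\ref{6stepsub2}. But the concrete resolution you propose does not carry this reversal through, in two places.

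\emph{The $\text{PATH}^{2}_{f}$ piece in $d=2$.} The bare re-entry event $\{B^{1}[T^{1}_{N-2},\infty)\cap\mathcal{B}(2^{(m/3)\vee(31N/60)+1})\neq\emptyset\}$ has probability only $O(1/N)$ by the logarithmic gambler's ruin, not $2^{-\delta N}$. Proposition~\ref{beurling estimate} needs a connected set $A$ to avoid, and there is no \emph{Brownian} set available here: $R_{m/3,N}$ constrains only the random walks. The paper's argument is precisely the reversal you announce but do not implement: on $Q^{c}$, the Brownian re-entry forces $\overline{S}^{1}$ to enter $\mathcal{B}(2^{2N/3})$ during $[\tau^{1}_{N-1},\tau^{1}_{N}]$; one then applies the \emph{random-walk} Beurling estimate (Proposition~\ref{beurling estimate}(ii)) with $A=\overline{S}^{2}[0,\tau^{2}_{N}]$, a connected path from the origin to $\partial\mathcal{B}_{N}$ whose avoidance is forced by $R_{m/3,N}$, giving cost $\le c\,2^{-N/6}$. (Your $d=3$ transience argument is correct and matches the paper.)

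\emph{The interior $B^{2}$ piece.} You invoke Proposition~\ref{hittable for bm}, but this is the wrong side: the event under analysis already has $B^{1}\cap B^{2}\neq\emptyset$, so Brownian hittability is vacuous, and there is no Brownian non-intersection constraint to contradict. The correct step (and what the paper does) is: on $Q^{c}$, the Brownian intersection forces
\[
\text{dist}\big(\overline{S}^{1}[\tau^{1}(2^{N-1}-2^{31N/60}),\tau^{1}(2^{N}-2^{2N/3}+2^{31N/60})],\ \overline{S}^{2}[0,\tau^{2}(2^{N}-2^{2N/3}+2^{31N/60})]\big)\le 2^{31N/60},
\]
and then Proposition~\ref{hittable for srw} (random-walk hittability) shows that, except on an event of probability $\le c\,2^{-6N}$, $\overline{S}^{1}$ hits $\overline{S}^{2}[0,\tau^{2}_{N}]$ before $\tau^{1}_{N}$ with probability $\ge 1-2^{-\delta N}$, contradicting $R_{m/3,N}$.

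In short, the reversal is not merely the monotonicity $R_{m/3,N}\subset R_{m/3,N-2}$; it requires that every use of Proposition~\ref{hittable for bm} or the continuous Beurling estimate in Lemmas~\ref{5step}--\ref{6stepsub2} be replaced here by Proposition~\ref{hittable for srw} or the discrete Beurling estimate, after first transferring closeness from Brownian motion to random walk via $Q^{c}$.
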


\begin{proof}
We will sketch the proof.
First we consider the following probability,
\begin{equation}\label{ineqsub3-1-1}
P^{w^{1}_{m/3},w^{2}_{m/3}}_{1,2}(  B^{1}[T^{1}_{N-1}, T^{1}_{N}] \cap  \text{PATH}^{2}_{f} \neq \emptyset , R_{\frac{m}{3} , N} ). 
\end{equation}
The probability that $B^{1}$ enters ${\cal B}_{\frac{m}{3}}$ during $[T^{1}_{N-1}, T^{1}_{N}]$ is bounded above by $c 2^{-(N-\frac{m}{3})}$ for $d=3$. Therefore by using the strong Markov property, we see that \eqref{ineqsub3-1-1} can be bounded above by $c 2^{-(N-\frac{m}{3})} 2^{-(N-\frac{m}{3})\xi } $. Since $N \ge m$, we have $2^{-(N-\frac{m}{3})} 2^{-(N-\frac{m}{3})\xi } \le 2^{-\frac{N}{2}} 2^{-(N-\frac{m}{3})\xi }$. 

For $d=2$, we use Proposition \ref{beurling estimate} as follows. Assume $B^{1}$ enters ${\cal B}_{\frac{m}{3}}$ during $[T^{1}_{N-1}, T^{1}_{N}]$. Then by a similar argument given in the proof of Lemma \ref{5step}, $\overline{S}^{1}$ also enters ${\cal B}(2^{\frac{2N}{3}})$ during $[\tau ^{1}_{N-1}, \tau ^{1}_{N}]$ with probability at least $1- 2^{-\frac{N}{6}}$. After $\overline{S}^{1}$ enters ${\cal B}(2^{\frac{2N}{3}})$, it follows from Proposition \ref{beurling estimate} that the probability that $\overline{S}^{1}$ does not intersect $\overline{S}^{2}[0, \tau^{2}_{N}] $ until it reaches $\partial {\cal B} (2^{N})$ is bounded above by $c 2^{-\frac{N}{6}}$. Combining these estimate, we see that \eqref{ineqsub3-1-1} can be bounded above by $c 2^{ {-\frac{N}{6}}} 2^{-(N-\frac{m}{3})\xi }$ for $d=2$. 

Therefore, in order to show \eqref{ineqsub3-1}, we need to estimate the following probability,
\begin{equation}\label{ineqsub3-1-2}
P^{w^{1}_{m/3},w^{2}_{m/3}}_{1,2}(  B^{1}[T^{1}_{N-1}, T^{1}_{N}] \cap B^{2}[0, T^{2}_{N}]  \neq \emptyset , R_{\frac{m}{3} , N} ).
\end{equation}
By similar arguments as in Lemma \ref{6stepsub1} and Remark \ref{rem1}, we have
\begin{align*}
&P^{w^{1}_{m/3},w^{2}_{m/3}}_{1,2}( B^{1}[T^{1}_{N-1}, T^{1}_{N}] \cap B^{2}[T^{2}(2^{N}-2^{\frac{2N}{3}}) , T^{2}_{N}] \neq \emptyset , R_{\frac{m}{3} , N} ) \le c 2^{-\delta N} 2^{-(N-\frac{m}{3})\xi } \\
&P^{w^{1}_{m/3},w^{2}_{m/3}}_{1,2}( B^{1}[T^{1}(2^{N}-2^{\frac{2N}{3}}) , T^{1}_{N}] \cap B^{2}[0 , T^{2}_{N}] \neq \emptyset , R_{\frac{m}{3} , N} ) \le c 2^{-\delta N} 2^{-(N-\frac{m}{3})\xi },
\end{align*}
for some $\delta > 0$ and $c < \infty$. So, assume $B^{1}[T^{1}_{N-1} , T^{1}(2^{N}-2^{\frac{2N}{3}}) ] \cap B^{2}[0 , T^{2}(2^{N}-2^{\frac{2N}{3}})] \neq \emptyset$. Then by Proposition \ref{skorohod}, 
\begin{equation}\label{srwbmgetsclose}
\text{dist} \big( \overline{S}^{1}[\tau^{1}(2^{N-1}-2^{\frac{31N}{60}}), \tau^{1}(2^{N}-2^{\frac{2N}{3}}+2^{\frac{31N}{60}}) ] , \overline{S}^{2}[0,\tau ^{2}(2^{N}-2^{\frac{2N}{3}}+2^{\frac{31N}{60}})] \big) \le 2^{\frac{31N}{60}},
\end{equation}
with probability at least $1- \exp ( -2^{\delta N})$. Then by modifying the proof of Lemma \ref{6stepsub1}, we see that 
\begin{equation*}
P^{w^{1}_{m/3},w^{2}_{m/3}}_{1,2} ( \{ \eqref{srwbmgetsclose} \text{ holds } \} \cap R_{\frac{m}{3} , N} ) \le c 2^{-\delta N} 2^{-(N-\frac{m}{3})\xi },
\end{equation*}
which gives the proof of the lemma.

\end{proof} 

\subsubsection{Conclusion \ Upper Bound}
Combining estimates obtained in subsections \ref{sub1-1}, \ref{sub2-1} and \ref{sub3-1} with \eqref{upperbound} and Proposition \ref{lowerbound}, we have the following.

\begin{prop}\label{upper-bound}
There exist $\delta > 0$ and $c < \infty$ such that 
\begin{equation}\label{upperboundesti}
|P^{w^{1}_{m/3},w^{2}_{m/3}}_{1,2}(R_{\frac{m}{3} , N}) - P^{w^{1}_{m/3},w^{2}_{m/3}}_{1,2}( J_{m,N})| \le  c 2^{-\delta m}2^{-(N-\frac{m}{3})\xi},
\end{equation}
on the event $F_{m} \cap G_{m} \cap H_{m}$.
\end{prop}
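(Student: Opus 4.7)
The plan is to pair Proposition \ref{lowerbound} with a matching reverse inequality on $F_m \cap G_m \cap H_m$,
\[
P^{w^{1}_{m/3},w^{2}_{m/3}}_{1,2}(R_{\frac{m}{3},N}) - P^{w^{1}_{m/3},w^{2}_{m/3}}_{1,2}(J_{m,N}) \le c\, 2^{-\delta m}\, 2^{-(N-\frac{m}{3})\xi},
\]
so that combining the two one-sided bounds gives the absolute value estimate in the statement.

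For this reverse inequality I would start from \eqref{upperbound} and control each summand $P^{w^{1}_{m/3},w^{2}_{m/3}}_{1,2}(R_{\frac{m}{3},N},\beta^{\sharp}=k)$ separately. On $\{\beta^{\sharp}=k\}$ the union \eqref{rwnohitbmhit} forces one of the Brownian motions, during its excursion between $\partial\mathcal{B}_{k-1}$ and $\partial\mathcal{B}_{k}$, to hit either the other Brownian path or the fattened path $\text{PATH}^{3-i}_f$, while simultaneously $B^{1}[T^{1}_{k}, T^{1}_{N}]$ and $B^{2}[T^{2}_{k}, T^{2}_{N}]$ remain disjoint. By the symmetry between $i=1$ and $i=2$ it suffices to treat $i=1$. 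I would then split the sum over $k$ into the three ranges $\frac{m}{3}+1 \le k \le \frac{21m}{60}$, $\frac{21m}{60} < k \le N-3$, and $k \in \{N-2,N-1,N\}$, invoking Lemmas \ref{sub2-1} and \ref{sub2-2}, Lemmas \ref{sub1-1} and \ref{sub1-2}, and Lemma \ref{ineqsub3-1} (together with the analogues for $k=N-1,\,N-2$ that follow by the same reasoning as for $k=N$), respectively. Each of these lemmas yields a bound of the form $c\, 2^{-(N-\frac{m}{3})\xi}\, 2^{-\delta k}$.

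Summing the resulting geometric series gives
\[
\sum_{k=\frac{m}{3}+1}^{N} c\, 2^{-(N-\frac{m}{3})\xi}\, 2^{-\delta k} \le c'\, 2^{-(N-\frac{m}{3})\xi}\, 2^{-\delta m/3},
\]
which is of the required form after relabeling $\delta$; combined with Proposition \ref{lowerbound} this closes the argument. All the substantive work—transferring Brownian hitting events to random walk intersections via the Skorohod coupling (Propositions \ref{skorohod} and \ref{measurability}) and converting proximity into intersection through Beurling's estimate (Proposition \ref{beurling estimate}) or the hittable-path estimates (Propositions \ref{hittable for bm} and \ref{hittable for srw})—is already discharged in the lemmas, so the remaining task is the routine bookkeeping of the sum over $k$, which presents no new obstacle.
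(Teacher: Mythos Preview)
Your proposal is correct and follows exactly the paper's own argument: the paper states the proposition as an immediate consequence of combining Proposition \ref{lowerbound} with the decomposition \eqref{upperbound} and the estimates from the three subsections (Lemmas \ref{sub1-1}, \ref{sub1-2}, \ref{sub2-1}, \ref{sub2-2}, \ref{ineqsub3-1}), then summing the geometric series in $k$. Your identification of the three ranges of $k$, the lemmas handling each, and the final summation all match the paper.
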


\section{Proof of Main Theorem}
\subsection{Cauchy sequence} 

Fix $L \in \mathbb{N}$ and $\overline{\gamma} \in \Gamma _{L}$. For $m \ge 10L$, define
\begin{equation}\label{cauchy}
Q(m, \overline{\gamma} ) = 2^{(m-L)\xi } P(A_{m}( \overline{\gamma} )).
\end{equation}
We will show the following Theorem.

\begin{thm}\label{cauchy seq}
There exist $ \delta > 0$ and $c < \infty$ depending only on the dimension such that for all $n \ge m \ge 10L$, we have
\begin{align}\label{cauchyesti}
| Q(m, \overline{\gamma} ) - Q(n, \overline{\gamma} ) | \le c 2^{-\delta \sqrt{m}}, \ \ \text{ for } d=2, \\
| Q(m, \overline{\gamma} ) - Q(n, \overline{\gamma} ) | \le c 2^{-\delta m}, \ \ \ \text { for } d=3.
\end{align}
\end{thm}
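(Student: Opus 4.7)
The plan is to show that, for any fixed $m \geq 10L$ and any $N \geq m$, the quantity $Q(N,\overline{\gamma})$ is within $O(2^{-\delta\sqrt{m}})$ (for $d=2$) or $O(2^{-\delta m})$ (for $d=3$) of a quantity that depends only on $m$ and $\overline{\gamma}$. The Cauchy estimate then follows by applying this with $N=n$ and $N=m$ and invoking the triangle inequality. The two main ingredients are already on the table: Proposition \ref{upper-bound}, which compares the walk probability $P^{w}_{1,2}(R_{m/3,N})$ to its Brownian-motion counterpart $P^{w}_{1,2}(J_{m,N})$ on the good event $F_{m}\cap G_{m}\cap H_{m}$, and Proposition \ref{lawlerbmresult}, which supplies the Brownian convergence with the desired rate.

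The first step is to use the strong Markov property at the hitting times $\tau^{i}_{m/3}$ to write
\begin{equation*}
P(A_{N}(\overline{\gamma})) = E\bigl[\mathbf{1}_{A_{m/3}(\overline{\gamma})}\,P^{w^{1}_{m/3},w^{2}_{m/3}}_{1,2}(R_{m/3,N})\bigr]
\end{equation*}
and then insert $\mathbf{1}_{F_{m}\cap G_{m}\cap H_{m}}$. The omitted piece is at most $c\,2^{-(N-L)\xi}\,2^{-\delta m}$ by Lemmas \ref{djlarge}, \ref{boundaryhit} and inequalities \eqref{gm}, \eqref{hm}, i.e., $O(2^{-\delta m})$ after multiplying by $2^{(N-L)\xi}$. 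On the good event, Proposition \ref{upper-bound} replaces $P^{w}_{1,2}(R_{m/3,N})$ with $P^{w}_{1,2}(J_{m,N})$ modulo an additive error $c\,2^{-\delta m}\,2^{-(N-m/3)\xi}$; integrating against $\mathbf{1}_{A_{m/3}(\overline{\gamma})}$ (using $P(A_{m/3}(\overline{\gamma}))\leq c\,2^{-(m/3-L)\xi}$) and multiplying by $2^{(N-L)\xi}$, this is again $O(2^{-\delta m})$.

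The crucial step is Brownian scaling: dilating by $2^{-m/3}$ recasts $P^{w^{1}_{m/3},w^{2}_{m/3}}_{1,2}(J_{m,N})$ as $P^{\tilde{w}^{1},\tilde{w}^{2}}_{1,2}\bigl(A_{(N-m/3)\log 2}(\tilde{K}_{1},\tilde{K}_{2})\bigr)$ in the notation of Section 2.4, with $\tilde{K}_{i}=2^{-m/3}\,\text{PATH}^{i}_{f}$ and $\tilde{w}^{i}=2^{-m/3}\,w^{i}_{m/3}$. Applying Proposition \ref{lawlerbmresult} pointwise in the realization of $(S^{1}[0,\tau^{1}_{m/3}],S^{2}[0,\tau^{2}_{m/3}])$, and using the uniform bound $Q(\tilde{K},\tilde{w})\leq c$ (a consequence of \eqref{probab estimate no int}) together with $N-m/3\geq 2m/3$, one obtains
\begin{equation*}
2^{(N-m/3)\xi}\,P^{w}_{1,2}(J_{m,N}) = Q(\tilde{K},\tilde{w})+O(2^{-\delta'\sqrt{m}}) \quad (d=2),
\end{equation*}
with the analogous $O(2^{-\delta' m})$ for $d=3$. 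Collecting everything,
\begin{equation*}
Q(N,\overline{\gamma}) = 2^{(m/3-L)\xi}\,E\bigl[\mathbf{1}_{A_{m/3}(\overline{\gamma})\cap F_{m}\cap G_{m}\cap H_{m}}\,Q(\tilde{K},\tilde{w})\bigr] + O(2^{-\delta\sqrt{m}}),
\end{equation*}
and the leading term is manifestly independent of $N$, which yields the theorem.

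Since the heavy lifting is already packaged into Propositions \ref{upper-bound} and \ref{lawlerbmresult}, the main residual delicacy is verifying the hypotheses of Proposition \ref{lawlerbmresult} for the random sets $(\tilde{K}_{1},\tilde{K}_{2})$: one needs $\tilde{K}_{i}\subset\mathcal{D}$ and $\tilde{w}^{i}\in\tilde{K}_{i}\cap\partial\mathcal{D}$. The first condition fails by at most the fattening width $2^{11m/60-m/3}=2^{-9m/60}$, and the second by the lattice boundary discrepancy of order $2^{-m/3}$; both are harmlessly absorbed by rescaling with $R$ slightly larger than $2^{m/3}$ (e.g., $R=\max_{i}|w^{i}_{m/3}|+2^{11m/60}$), at the cost of shrinking $\delta$ by a constant factor. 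Uniformity of the constants $c,\beta$ in Proposition \ref{lawlerbmresult} over $(K,w)$ is built into that statement, so the pointwise estimate can be integrated over the good event without further care.
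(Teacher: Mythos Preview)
Your proof is correct and follows essentially the same route as the paper: decompose at scale $m/3$ via the strong Markov property, restrict to the good event $F_m\cap G_m\cap H_m$, swap $R_{m/3,N}$ for $J_{m,N}$ via Proposition~\ref{upper-bound}, and invoke Proposition~\ref{lawlerbmresult} after Brownian rescaling. The only cosmetic difference is that you show $Q(N,\overline{\gamma})$ is close to an $N$-independent quantity and then apply the triangle inequality, whereas the paper compares $Q(m,\overline{\gamma})$ and $Q(n,\overline{\gamma})$ directly; your explicit check of the hypotheses of Proposition~\ref{lawlerbmresult} (containment in $\mathcal{D}$, basepoints on $\partial\mathcal{D}$) is a welcome bit of care that the paper leaves implicit.
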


\begin{proof}
Fix $L \in \mathbb{N}$, $\overline{\gamma} \in \Gamma _{L}$ and $n \ge m \ge 10L$. Then
\begin{align*}
&| Q(m, \overline{\gamma} ) - Q(n, \overline{\gamma} ) | \\
&\le | 2^{(m-L)\xi } P ( A_{m}( \overline{\gamma} ) \cap F_{m} \cap G_{m} \cap H_{m} ) - 2^{(n-L)\xi }  P ( A_{n}( \overline{\gamma} ) \cap F_{m} \cap G_{m} \cap H_{m} ) | + c 2^{-\delta m},
\end{align*}
for some $\delta > 0$ and $ c < \infty$. By the strong Markov property,
\begin{align}\label{bunri}
&| 2^{(m-L)\xi } P ( A_{m}( \overline{\gamma} ) \cap F_{m} \cap G_{m} \cap H_{m} ) - 2^{(n-L)\xi }  P ( A_{n}( \overline{\gamma} ) \cap F_{m} \cap G_{m} \cap H_{m} ) | \notag \\
&=| 2^{(\frac{m}{3}-L)\xi } E \big( {\bf 1} \{ A_{\frac{m}{3}}( \overline{\gamma} ) \cap F_{m} \cap G_{m} \cap H_{m} \} 2^{ \frac{2m}{3} \xi} P^{w^{1}_{m/3},w^{2}_{m/3}}_{1,2} ( R_{\frac{m}{3} , m} ) \big) \notag \\
 &- 2^{(\frac{m}{3}-L)\xi } E \big( {\bf 1} \{ A_{\frac{m}{3}}( \overline{\gamma} ) \cap F_{m} \cap G_{m} \cap H_{m} \} 2^{ (n-\frac{m}{3}) \xi } P^{w^{1}_{m/3},w^{2}_{m/3}}_{1,2} ( R_{\frac{m}{3} , n} ) \big) |
\end{align}
By Proposition \ref{upper-bound}, we have 
\begin{align*}
&|P^{w^{1}_{m/3},w^{2}_{m/3}}_{1,2} ( R_{\frac{m}{3} , m} ) \big) - P^{w^{1}_{m/3},w^{2}_{m/3}}_{1,2}( J_{m,m}) | \le c 2^{-\delta m}2^{-\frac{2m}{3}\xi} \\
&|P^{w^{1}_{m/3},w^{2}_{m/3}}_{1,2}(R_{\frac{m}{3} , n}) - P^{w^{1}_{m/3},w^{2}_{m/3}}_{1,2}( J_{m,n})| \le  c 2^{-\delta m}2^{-(n-\frac{m}{3})\xi},
\end{align*}
on the event $F_{m} \cap G_{m} \cap H_{m}$. Therefore, the right hand side of \eqref{bunri} is bounded above by
\begin{align*}
&2^{(\frac{m}{3}-L)\xi} E \big( {\bf 1} \{ V^{m} \} 2^{ \frac{2m}{3} \xi} |P^{w^{1}_{m/3},w^{2}_{m/3}}_{1,2} ( R_{\frac{m}{3} , m} )- P^{w^{1}_{m/3},w^{2}_{m/3}}_{1,2}( J_{m,m})| \big) \\
&+ 2^{(\frac{m}{3}-L)\xi} E \big( {\bf 1} \{ V^{m} \} | 2^{ \frac{2m}{3} \xi} P^{w^{1}_{m/3},w^{2}_{m/3}}_{1,2}( J_{m,m}) - 2^{ (n-\frac{m}{3}) \xi } P^{w^{1}_{m/3},w^{2}_{m/3}}_{1,2}( J_{m,n}) | \big) \\
&+ 2^{(\frac{m}{3}-L)\xi} E \big( {\bf 1} \{ V^{m} \} 2^{ (n-\frac{m}{3}) \xi} |P^{w^{1}_{m/3},w^{2}_{m/3}}_{1,2} ( R_{\frac{m}{3} , n} )- P^{w^{1}_{m/3},w^{2}_{m/3}}_{1,2}( J_{m,n})| \big) \\
&\le 2^{(\frac{m}{3}-L)\xi} E \big( {\bf 1} \{ V^{m} \} | 2^{ \frac{2m}{3} \xi} P^{w^{1}_{m/3},w^{2}_{m/3}}_{1,2}( J_{m,m}) - 2^{ (n-\frac{m}{3}) \xi } P^{w^{1}_{m/3},w^{2}_{m/3}}_{1,2}( J_{m,n}) | \big) \\
&+ c 2^{-\delta m} 2^{(\frac{m}{3}-L)\xi} P(V^{m}),
\end{align*}
where $V^{m} = A_{\frac{m}{3}}( \overline{\gamma} ) \cap F_{m} \cap G_{m} \cap H_{m}$. By Proposition \ref{convergencethm}, 
\begin{equation*}
| 2^{ \frac{2m}{3} \xi} P^{w^{1}_{m/3},w^{2}_{m/3}}_{1,2}( J_{m,m}) - 2^{ (n-\frac{m}{3}) \xi } P^{w^{1}_{m/3},w^{2}_{m/3}}_{1,2}( J_{m,n}) | \le c 2^{-\delta \sqrt{m}},
\end{equation*}
for $d=2$ and 
\begin{equation*}
| 2^{ \frac{2m}{3} \xi} P^{w^{1}_{m/3},w^{2}_{m/3}}_{1,2}( J_{m,m}) - 2^{ (n-\frac{m}{3}) \xi } P^{w^{1}_{m/3},w^{2}_{m/3}}_{1,2}( J_{m,n}) | \le c 2^{-\delta m},
\end{equation*}
for $d=3$ on the event $V^{m}$. Hence 
\begin{align*}
&2^{(\frac{m}{3}-L)\xi} E \big( {\bf 1} \{ V^{m} \} | 2^{ \frac{2m}{3} \xi} P^{w^{1}_{m/3},w^{2}_{m/3}}_{1,2}( J_{m,m}) - 2^{ (n-\frac{m}{3}) \xi } P^{w^{1}_{m/3},w^{2}_{m/3}}_{1,2}( J_{m,n}) | \big) \\
&\le c  2^{-\delta m^{\frac{d}{2}-\frac{1}{2}}} 2^{(\frac{m}{3}-L)\xi} P ( V^{m} ).
\end{align*}
Finally, by the strong Markov property, 
\begin{equation*}
P ( V^{m} ) \le P( A_{L+1}( \overline{\gamma} ) )c 2^{ -( \frac{m}{3} - L)\xi },
\end{equation*}
and the proof is finished.

\end{proof}
From Theorem \ref{cauchy seq}, we get the following corollary immediately.

\begin{cor}\label{coro}
There exist $\delta > 0$ and $ c < \infty$ such that the following holds. For each $L \in \mathbb{N}$ and $\overline{\gamma} \in \Gamma _{L}$, there exists $Q( \overline{\gamma} ) \in (0,1)$ such that 
\begin{align}\label{coro1}
&\lim _{m \rightarrow \infty} Q(m, \overline{\gamma} ) = Q( \overline{\gamma} ) \\
&| Q(m, \overline{\gamma} ) - Q( \overline{\gamma} ) | \le c 2^{-\delta m^{\frac{d}{2}-\frac{1}{2}}}.
\end{align}
Especially, there exists a $\alpha \in (0,1)$ such that
\begin{align}\label{coro2}
P \big( (S^{1}[0, \tau^{1}_{n}], S^{2}[0,\tau^{2}_{n}]) \in \Gamma_{n} \big) \sim \alpha 2^{-n\xi} \\
|P \big( (S^{1}[0, \tau^{1}_{n}], S^{2}[0,\tau^{2}_{n}]) \in \Gamma_{n} \big) 2^{n\xi} - \alpha | \le c 2^{-\delta n^{\frac{d}{2}-\frac{1}{2}}}.
\end{align}
\end{cor}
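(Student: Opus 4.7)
The plan is to reduce the random walk non-intersection probabilities $P(A_N(\overline{\gamma}))$ to their Brownian analogues via the coupling/Skorohod estimates of Section 3, and then import the Brownian convergence rate from Proposition \ref{lawlerbmresult}. The key tool is Proposition \ref{upper-bound}: on the good event $V^m := A_{m/3}(\overline{\gamma}) \cap F_m \cap G_m \cap H_m$, the conditional random walk probability $P^{w^1_{m/3},w^2_{m/3}}_{1,2}(R_{m/3,N})$ agrees with its Brownian analogue $P^{w^1_{m/3},w^2_{m/3}}_{1,2}(J_{m,N})$ up to $c\,2^{-\delta m}\,2^{-(N-m/3)\xi}$.

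First I would dispose of the bad event $(F_m \cap G_m \cap H_m)^c$: by Lemma \ref{djlarge}, Lemma \ref{boundaryhit}, and the bound \eqref{hm}, we have $2^{(N-L)\xi} P(A_N(\overline{\gamma}) \cap (F_m \cap G_m \cap H_m)^c) \le c\,2^{-\delta m}$, which is already within the desired bound for both $N=m$ and $N=n$. So it suffices to compare $2^{(m-L)\xi} P(A_m(\overline{\gamma}) \cap V^m)$ with $2^{(n-L)\xi} P(A_n(\overline{\gamma}) \cap V^m)$. Applying the strong Markov property at $(\tau^1_{m/3}, \tau^2_{m/3})$ and then invoking Proposition \ref{upper-bound}, each of these equals $2^{(m/3-L)\xi} E[\mathbf{1}_{V^m}\, 2^{(N-m/3)\xi} P^{w^1_{m/3},w^2_{m/3}}_{1,2}(J_{m,N})]$ modulo an additive error of order $c\,2^{-\delta m}\,2^{(m/3-L)\xi} P(V^m)$.

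Subtracting the $N=m$ and $N=n$ versions reduces the problem to estimating $2^{(m/3-L)\xi} E[\mathbf{1}_{V^m}\,\Delta_{m,n}]$, where $\Delta_{m,n} := | 2^{(2m/3)\xi} P^{w}_{1,2}(J_{m,m}) - 2^{(n-m/3)\xi} P^{w}_{1,2}(J_{m,n}) |$. After rescaling by $2^{m/3}$, the event $J_{m,N}$ becomes a Brownian non-intersection event in the unit disk $\mathcal{D}$ up to radius $2^{N-m/3}$, avoiding the rescaled fattened paths $\text{PATH}^i_f / 2^{m/3}$, which play the role of the sets $K_i$ in Proposition \ref{lawlerbmresult}. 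That proposition then yields $\Delta_{m,n} \le c\,2^{-\delta \sqrt{m}}$ for $d=2$ and $\Delta_{m,n} \le c\,2^{-\delta m}$ for $d=3$, uniformly on $V^m$. A second strong Markov application at $(\tau^1_{L+1}, \tau^2_{L+1})$ combined with Proposition \ref{intersection exp rw} gives $2^{(m/3-L)\xi} P(V^m) \le c\,P(A_{L+1}(\overline{\gamma}))$, which is a constant independent of $m,n$.

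The main obstacle will be the identification step feeding Proposition \ref{lawlerbmresult}: the rescaled starting points $w^i_{m/3}/2^{m/3}$ lie only approximately on $\partial\mathcal{D}$, and the rescaled $\text{PATH}^i_f$ are nonstandard sets rather than the $K_i$ of that proposition's hypothesis. On $F_m$ one has $\min\{\text{dist}(w^1_{m/3},\text{PATH}^2_f),\,\text{dist}(w^2_{m/3},\text{PATH}^1_f)\} \ge 2^{0.99m/3 - 1}$, so after rescaling the starting points are uniformly separated from the opposite obstacle, and the Brownian convergence applies with dimension-only constants; the uniform bound $Q(K,w) \le c$ needed for Proposition \ref{lawlerbmresult} to yield a clean $\Delta_{m,n}$ estimate follows from the upper bound in Proposition \ref{intersection exp bm}. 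Beyond this adaptation, the remainder is careful bookkeeping of strong Markov applications and normalizing factors of $2^{k\xi}$.
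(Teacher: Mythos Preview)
Your proposal is correct and follows essentially the same route as the paper: the corollary is stated there as an immediate consequence of Theorem~\ref{cauchy seq}, whose proof is exactly the argument you outline---restrict to $V^m=A_{m/3}(\overline{\gamma})\cap F_m\cap G_m\cap H_m$, apply the strong Markov property at $\tau^i_{m/3}$, use Proposition~\ref{upper-bound} to swap $R_{m/3,N}$ for $J_{m,N}$, invoke Proposition~\ref{lawlerbmresult} for the Brownian difference, and close with the bound $P(V^m)\le c\,2^{-(m/3-L)\xi}P(A_{L+1}(\overline{\gamma}))$. Your remark on the identification step (rescaled endpoints only approximately on $\partial\mathcal{D}$, and $\text{PATH}^i_f$ as the obstacle sets $K_i$) is a fair point that the paper passes over silently, but the separation provided by $F_m$ is indeed what makes the constants in Proposition~\ref{lawlerbmresult} uniform.
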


\begin{cor}\label{coro-2}
There exist $\delta > 0$ and $ c < \infty$ such that the following holds. For each $L \in \mathbb{N}$ and $\overline{\gamma} \in \Gamma _{L}$, the limit
\begin{equation}\label{existmeas}
\lim _{N \rightarrow \infty } P( (S^{1}[0, \tau^{1}_{L}], S^{1}[0, \tau^{2}_{L}] ) = \overline{\gamma} \  | \  (S^{1}[0, \tau^{1}_{N}], S^{1}[0, \tau^{2}_{N}] ) \in \Gamma _{N} )
\end{equation}
exists. If we write $P^{\sharp}( \overline{\gamma} )$ for the limit, then 
\begin{equation}\label{speedofconv}
| P( (S^{1}[0, \tau^{1}_{L}], S^{1}[0, \tau^{2}_{L}] ) = \overline{\gamma} \ | \ (S^{1}[0, \tau^{1}_{N}], S^{1}[0, \tau^{2}_{N}] ) \in \Gamma _{N} ) - P^{\sharp}( \overline{\gamma} ) | \le c 2^{-\delta N^{\frac{d}{2}-\frac{1}{2}}}.
\end{equation}
\end{cor}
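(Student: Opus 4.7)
The plan is to reduce the conditional probability to a ratio of two quantities already controlled by Corollary \ref{coro}, and then to show that the convergence rate carries through division.

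First, I would apply the strong Markov property at the stopping times $\tau^{1}_{L}, \tau^{2}_{L}$. Writing $p(\overline{\gamma}) := P\big( (S^{1}[0,\tau^{1}_{L}], S^{2}[0,\tau^{2}_{L}]) = \overline{\gamma} \big)$, the event in the numerator of the conditional probability factors, since given that the initial segments equal $\overline{\gamma}$, the continuations are independent random walks started at the endpoints $w^{1}, w^{2}$ of $\gamma^{1}, \gamma^{2}$, and the remaining requirement for membership in $\Gamma(N)$ is precisely $A_{N}(\overline{\gamma})$. Therefore
\begin{equation*}
P\big( (S^{1}[0,\tau^{1}_{L}], S^{2}[0,\tau^{2}_{L}]) = \overline{\gamma} \ \big|\ (S^{1}[0,\tau^{1}_{N}], S^{2}[0,\tau^{2}_{N}]) \in \Gamma_{N} \big) = \frac{p(\overline{\gamma})\, P(A_{N}(\overline{\gamma}))}{P\big( (S^{1}[0,\tau^{1}_{N}], S^{2}[0,\tau^{2}_{N}]) \in \Gamma_{N} \big)}.
\end{equation*}

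Next, I would multiply numerator and denominator by $2^{N\xi}$ and rewrite the right-hand side as
\begin{equation*}
\frac{p(\overline{\gamma})\, 2^{L\xi}\, Q(N, \overline{\gamma})}{2^{N\xi}\, P\big( (S^{1}[0,\tau^{1}_{N}], S^{2}[0,\tau^{2}_{N}]) \in \Gamma_{N} \big)}.
\end{equation*}
By Corollary \ref{coro}, the numerator quantity $Q(N,\overline{\gamma})$ converges to $Q(\overline{\gamma}) \in (0,1)$ and the denominator quantity converges to $\alpha \in (0,1)$, both with error at most $c\, 2^{-\delta N^{d/2 - 1/2}}$. This identifies the candidate limit
\begin{equation*}
P^{\sharp}(\overline{\gamma}) := \frac{p(\overline{\gamma})\, 2^{L\xi}\, Q(\overline{\gamma})}{\alpha}.
\end{equation*}

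To finish, I would invoke the elementary bound: if $a_{N} \to a$ and $b_{N} \to b > 0$ with $|a_{N} - a|, |b_{N} - b| \le \epsilon_{N}$, and $b_{N}$ is bounded below by $b/2$ for $N$ large, then
\begin{equation*}
\Big| \frac{a_{N}}{b_{N}} - \frac{a}{b} \Big| \le \frac{|a_{N} - a|}{b_{N}} + \frac{a_{N}\, |b_{N} - b|}{b_{N}\, b} \le C \epsilon_{N}.
\end{equation*}
Applied to the pair $(a_{N}, b_{N}) = (p(\overline{\gamma}) 2^{L\xi} Q(N,\overline{\gamma}),\ 2^{N\xi} P((S^{1},S^{2}) \in \Gamma_{N}))$, and using that $\alpha > 0$ so $b_{N} \ge \alpha/2$ for $N$ large while $a_{N} \le 2^{L\xi}$, this yields
\begin{equation*}
\Big| P\big(\cdots = \overline{\gamma} \ \big|\ \cdots \in \Gamma_{N}\big) - P^{\sharp}(\overline{\gamma}) \Big| \le c\, 2^{-\delta N^{d/2-1/2}},
\end{equation*}
with a constant $c$ that depends on $\overline{\gamma}$ only through the (harmless) factor $p(\overline{\gamma}) 2^{L\xi} \le 1$.

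There is no real obstacle here, since the substantive analytic work has been done in Theorem \ref{cauchy seq} and Corollary \ref{coro}; the only minor point to check is that $\alpha > 0$ (hence the denominator is bounded away from zero for all large $N$), which is part of the statement of Corollary \ref{coro}, and that $Q(\overline{\gamma}) > 0$, which follows from the lower bound $P(A_{m}(\overline{\gamma})) \ge c\, 2^{-(m-L)\xi}$ implicit in Proposition \ref{intersection exp rw}.
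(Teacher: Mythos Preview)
Your proof is correct and follows exactly the same approach as the paper's: both factor the conditional probability via the strong Markov property into $\dfrac{p(\overline{\gamma})\,P(A_N(\overline{\gamma}))}{P((S^1,S^2)\in\Gamma_N)}$, identify $P^{\sharp}(\overline{\gamma}) = p(\overline{\gamma})\,2^{L\xi}Q(\overline{\gamma})/\alpha$, and conclude by Corollary~\ref{coro}. Your write-up is simply more explicit than the paper's (which omits the elementary ratio estimate), and note that $Q(\overline{\gamma})>0$ is already asserted in Corollary~\ref{coro} and is in any case not needed for the rate bound---only $\alpha>0$ matters.
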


\begin{proof}
Fix $L \in \mathbb{N}$ and $\overline{\gamma} \in \Gamma _{L}$. Let
\begin{equation*}
p( \overline{\gamma}) = P( (S^{1}[0, \tau^{1}_{L}], S^{1}[0, \tau^{2}_{L}] ) = \overline{\gamma} ).
\end{equation*}
Then 
\begin{align*}
&P( (S^{1}[0, \tau^{1}_{L}], S^{1}[0, \tau^{2}_{L}] ) = \overline{\gamma} \  | \  (S^{1}[0, \tau^{1}_{N}], S^{1}[0, \tau^{2}_{N}] ) \in \Gamma _{N} ) \\
&=\frac{ p( \overline{\gamma}) P ( A_{N}( \overline{\gamma} )) }{ P ( (S^{1}[0, \tau^{1}_{N}], S^{1}[0, \tau^{2}_{N}] ) \in \Gamma _{N} )}
\end{align*}
By Corollary \ref{coro}, letting $P^{\sharp}(\overline{\gamma} )$ be
\begin{equation*}
\frac{p( \overline{\gamma} ) 2^{L\xi} Q( \overline{\gamma} ) }{\alpha},
\end{equation*}
the proof is finished.

\end{proof}

\begin{rem}\label{hokan}
In order to simplify the notations, all results above were stated for the first hitting time of $\partial {\cal B}(2^{N})$ instead of $\partial {\cal B} (N)$. However there is no essential difference between them and similar arguments also work for the latter case. Since it is easy to extend above results to the hitting time of $\partial {\cal B} (N)$, we leave the details to the reader.

\end{rem}

{\bf Acknowledgement.} The author wishes to express his sincere gratitude to Greg Lawler for his guidance and helpful discussions. I also thank Takashi Kumagai for careful reading of the early version of the manuscript.

\end{document}